\title[Nilpotent completions, Grothendieck pairs, and problems of Baumslag]{Nilpotent completions of groups, Grothendieck pairs, and four problems of Baumslag}
\author{M. R. Bridson}
\author{A. W. Reid}
\address{\newline Mathematical Institute, 
\newline Andrew Wiles Building,
\newline University of Oxford,
\newline Oxford OX2 6GG, UK}
\email{ bridson@maths.ox.ac.uk}
\address{\newline Department of Mathematics,
\newline University of Texas, 
\newline Austin, TX 78712, USA}
\email{ areid@math.utexas.edu}
\thanks{Bridson was supported in part by grants from the
EPSRC of Great Britain and a Royal Society Wolfson Merit Award. Reid was supported in part 
by an NSF grant}
\def\N{\mathbb{N}}
\def\epi{\twoheadrightarrow}
\def\wh{\widehat}
\def\g{\gamma}
\def\G{\Gamma}
\def\Z{\Bbb Z}
\def\Q{\Bbb Q}
\def\R{\Bbb R}
\def\F{\Bbb F}
\def\ker{\rm{ker}}
\def\nil{\rm{nil}}
\def\PSL{\rm{PSL}}
\def\Epi{\rm{Epi}}
\def\qed{ $\sqcup\!\!\!\!\sqcap$}
\def\G{\Gamma}
\def\fn{{\rm{fn}}} 
\def\<{\langle}
\def\>{\rangle}
\def\ilim{\varprojlim}
\def\Hom{{\rm{Hom}}}
\def\Epi{{\rm{Epi}}}
\newtheorem{theorem}{Theorem}[section]
\newtheorem{lemma}[theorem]{Lemma}
\newtheorem{corollary}[theorem]{Corollary}
\newtheorem{proposition}[theorem]{Proposition}
\newtheorem{problem}[theorem]{Problem}
\newtheorem{letterthm}{Theorem}
\theoremstyle{definition} 
\newtheorem{remark}[theorem]{Remark}
\newtheorem{example}[theorem]{Example}
\newtheorem{remarks}[theorem]{Remarks}
\begin{document}

\begin{abstract} Two groups are said to have the same nilpotent genus if they have the same nilpotent quotients.
We answer four 
questions of Baumslag concerning nilpotent completions.
(i) There exists a pair of finitely generated, residually
torsion-free-nilpotent groups of the same nilpotent genus such that
one is finitely presented and the other is not.  
(ii) There exists a pair of finitely presented, residually
torsion-free-nilpotent groups of the same nilpotent genus such that
one has a solvable conjugacy problem and the other does not. (iii) There exists a pair of finitely generated, residually
torsion-free-nilpotent groups of the same nilpotent genus such that
one has finitely generated second homology $H_2(-,\Z)$ and the other does not.
 (iv) A non-trivial normal subgroup of infinite index in a finitely generated parafree group cannot be finitely generated. In proving
this last result, we establish that the first $L^2$-Betti number of a finitely generated parafree group in the same nilpotent genus as
a free group of rank $r$ is $r-1$. It
follows that the reduced $C^*$-algebra of the group is simple if $r\ge 2$, and that a version of the Freiheitssatz holds for
parafree groups.
\end{abstract}

\subjclass{20E26, 20E18 (20F65, 20F10, 57M25) }

\keywords{Residually nilpotent, parafree groups, Grothendieck pairs.}

\maketitle

%
%
%
%
\section{Introduction}
If each finite subset of a group $\G$ injects into some nilpotent (or finite) quotient of $\G$,
then it is reasonable to expect that one will be able to detect many properties of $\G$ from
the totality of its nilpotent (or finite) quotients. Attempts to lend precision to this observation,
and to test its limitations, have surfaced repeatedly in the study of discrete and profinite groups
over the last forty years, and there has been a particular resurgence of interest recently,
marked by several notable breakthroughs. We take up this theme here, with a focus on the nilpotent completions
of residually torsion-free-nilpotent groups.

We begin by recalling some terminology.
A group $\G$ is said to be {\em residually 
nilpotent} (resp. {\em residually torsion-free-nilpotent})
if for each non-trivial $\gamma\in\G$ there exists a nilpotent
group (resp. torsion-free-nilpotent group) $Q$ and a homomorphism
$\phi:\G\rightarrow Q$ with $\phi(\gamma)\neq 1$. Thus 
$\G$ is residually nilpotent if and only if  $\bigcap \G_n = 1$, where $\G_n$ is the
$n$-th term of the {\em lower central series} of $\G$, defined inductively by
setting $\G_1=\G$ and defining $\G_{n+1} = \< [x,y] : x\in \G_n, y\in \G\>.$ 

We say that two residually nilpotent groups $\G$ and $\Lambda$ have the
same {\em nilpotent genus}\footnote{Baumslag uses the simpler term
  ``genus", but this conflicts with the usage of the term in the study
of profinite groups, and since we study different completions it
seems best to be more precise here.} if they have the
same lower central series quotients; i.e. $\G/\G_c \cong \Lambda/\Lambda_c$ for all
$c\geq 1$.

Residually nilpotent groups with the same nilpotent genus as a
free group are termed {\em parafree}.
In \cite{baum} Gilbert Baumslag surveyed the state of the art
concerning groups of the same nilpotent genus
with particular emphasis on the nature of parafree groups. He
concludes by listing a number of open problems that are of particular
importance in the field. We will address Problems 2, 4 and 6 from his list \cite{baum},
two of which he raised again in \cite{chat} where he 
emphasised the importance of the third of the problems described below. 

\begin{problem} 
\label{baumslag1} 
Does there exist a pair of finitely  generated,
residually torsion-free-nilpotent groups of the same nilpotent genus such that
one is finitely presented and the other is not?
\end{problem}

\begin{problem} 
\label{baumslag2}
Does there exist a pair of finitely presented
residually torsion-free-nilpotent groups of the same nilpotent genus such that
one has a solvable conjugacy problem and the other does not?
\end{problem}

\begin{problem} \label{baumslag3} 
Does there exist a pair of finitely  generated, residually
torsion-free-nilpotent groups of the same nilpotent genus such that one
has finitely generated second homology $H_2(-,\Z)$ and the other does not? 
\end{problem}

\begin{problem} 
\label{baumslag4} Let $G$ be a finitely generated parafree group and
let $N<G$ be a finitely generated, non-trivial, normal
subgroup. Must $N$ be of finite index in $G$?
\end{problem}

In \cite{baum} Baumslag gives some evidence to suggest
that pairs of groups as in Problem \ref{baumslag2} might exist; he had
earlier proved that there exist finitely presented residually torsion-free
nilpotent groups with unsolvable conjugacy problem \cite{newbaum}.
Baumslag also proves a partial result in connection with Problem
\ref{baumslag4} (see Theorem 7 of \cite{baum}).  Note that Problem
\ref{baumslag4} is well known to have a positive answer for free
groups.

In this paper we answer these questions. Concerning the possible divergence in behaviour between
groups in the same nilpotent genus, we prove:

\begin{letterthm}\label{main}
\begin{enumerate}
\item There exist pairs of finitely generated, residually
torsion-free-nilpotent groups $H\hookrightarrow D$ of the same nilpotent genus  such that
$D$ is finitely presented and $H$ is not.

\item There exist pairs of finitely presented, residually
torsion-free-nilpotent groups $P\hookrightarrow\G$ of the same nilpotent genus  such that
$\G$ has a solvable conjugacy problem and $P$ does not.

\item There exist pairs of finitely generated, residually 
torsion-free-nilpotent groups $N\hookrightarrow\G$ of the same nilpotent genus  such that
$H_2(\G,\Z)$ is finitely generated but  $H_2(N,\Z)$ is not.
\end{enumerate}
\end{letterthm}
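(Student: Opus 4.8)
The plan is to build all three pairs as Grothendieck-style pairs in the nilpotent category, using fibre products over superperfect quotients together with Stallings' homological criterion to force the two groups to share a nilpotent genus. Concretely, I would first isolate a \emph{nilpotent Platonov--Tavgen criterion}: given a short exact sequence $1\to N\to\Gamma\to Q\to 1$ with quotient map $p\colon\Gamma\to Q$, where $\Gamma$ is finitely generated and $Q$ is \emph{superperfect} (that is $H_1(Q,\Z)=H_2(Q,\Z)=0$), the fibre product $P=\{(x,y)\in\Gamma\times\Gamma : p(x)=p(y)\}$ and the ambient group $\Gamma\times\Gamma$ have the same nilpotent genus. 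The engine is Stallings' theorem: a homomorphism inducing an isomorphism on $H_1(-,\Z)$ and an epimorphism on $H_2(-,\Z)$ induces isomorphisms on all lower central quotients. So I would reduce the criterion to checking that the inclusion $P\hookrightarrow\Gamma\times\Gamma$ satisfies these two homological conditions, which in turn follow from $H_1(Q)=H_2(Q)=0$ by chasing the five-term exact sequences (equivalently the Lyndon--Hochschild--Serre spectral sequences) of $1\to N\to P\to\Gamma\to 1$ and $1\to N\to\Gamma\to Q\to 1$. Since $P\le\Gamma\times\Gamma$, residual torsion-free nilpotence of $P$ is inherited from that of $\Gamma$, so it suffices throughout to choose $\Gamma$ residually torsion-free-nilpotent.

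With this machine in place, parts (1) and (3) can be obtained with $\Gamma$ a free group $F$, which is visibly finitely presented and residually torsion-free-nilpotent. For (1), I would take $Q$ to be an infinite, finitely presented, acyclic (hence superperfect) group, for instance Higman's four-generator four-relator group, whose abelianisation is trivial. Then $\Gamma\times\Gamma$ is finitely presented, while the fibre product $P$ is finitely generated (because $Q$ is finitely presented) but \emph{not} finitely presented: since $Q$ is infinite, $N\triangleleft F$ is a non-trivial normal subgroup of infinite index in a free group, hence infinitely generated, and this obstructs finite presentability of $P$. For (3), again with $\Gamma=F$, the group $H_2(\Gamma\times\Gamma)\cong H_1(F)\otimes H_1(F)$ is finitely generated; using $H_q(N)=0$ for $q\ge 2$ (as $N$ is free) the spectral sequence identifies $H_2(P)$ with $H_1(F,N^{\ab})$, a homology group of the relation module, whose finite generation I expect to be equivalent to finite generation of $H_3(Q,\Z)$. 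Thus it suffices to produce a finitely presented superperfect group $Q$ with $H_3(Q,\Z)$ \emph{not} finitely generated (in contrast to the acyclic group used for (1)); then $H_2(P)$ is not finitely generated while $H_2(\Gamma\times\Gamma)$ is.

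Part (2) is different, because both groups must be finitely presented, so the free-group trick is unavailable: here I would instead make $P$ itself finitely presented by invoking the $1$--$2$--$3$ theorem, which requires $N$ finitely generated, $\Gamma$ finitely presented, and $Q$ of type $F_3$. The target is a superperfect group $Q$ of type $F_3$ with \emph{unsolvable} conjugacy problem, together with a residually torsion-free-nilpotent, finitely presented $\Gamma$ having solvable conjugacy problem and a finitely generated normal subgroup $N$ with $\Gamma/N\cong Q$. The fibre product $P$ is then finitely presented, residually torsion-free-nilpotent, and of the same nilpotent genus as $\Gamma\times\Gamma$; by the analysis of conjugacy in fibre products (Miller, Bridson--Miller) the conjugacy problem in $P$ encodes that of $Q$ and is therefore unsolvable, whereas $\Gamma\times\Gamma$ retains the solvable conjugacy problem of $\Gamma$.

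The main obstacle, and the part I expect to require genuine construction rather than formal bookkeeping, is manufacturing the short exact sequences $1\to N\to\Gamma\to Q\to 1$ with all the competing properties holding simultaneously: $Q$ superperfect (essential for preserving the nilpotent genus) yet carrying the prescribed pathology (infinite $Q$ for (1), non-finitely-generated $H_3(Q,\Z)$ for (3), unsolvable conjugacy problem with type $F_3$ for (2)), while $\Gamma$ is residually torsion-free-nilpotent and, for (2), $N$ is finitely generated. The tension is sharp precisely because residual torsion-free nilpotence pulls toward large torsion-free abelian quotients, whereas superperfection of $Q$ forbids any non-trivial nilpotent quotient at all. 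Reconciling these demands — in effect a residually-torsion-free-nilpotent analogue of the Rips/Platonov--Tavgen constructions, together with the construction of finitely presented superperfect groups exhibiting the required higher finiteness and algorithmic pathologies — is where the real work lies.
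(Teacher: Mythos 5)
Your architecture matches the paper's: Stallings' theorem on lower central quotients is indeed the engine, fibre products over suitable quotients $Q$ supply parts (1) and (2), and for (3) one transfers a pathology in $H_3(Q)$ down to $H_2$ of the subgroup by a spectral sequence. But there are genuine gaps. First, your ``nilpotent Platonov--Tavgen criterion'' is stated with $Q$ merely superperfect, and your five-term-sequence chase silently assumes that the conjugation action of $Q$ on $H_1(N,\Z)$ is trivial: without that, the five-term sequence only identifies $H_1(\G)$ with the \emph{coinvariants} $H_0(Q,H_1(N,\Z))$, so $H_1(N,\Z)\to H_1(\G,\Z)$ need not be injective and Stallings does not apply; likewise $E^2_{1,1}=H_1(Q,H_1(N,\Z))$ need not vanish, threatening surjectivity on $H_2$. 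The paper assumes instead that $Q$ has \emph{no non-trivial finite quotients} and kills the action by observing that ${\rm{Aut}}$ of a finitely generated abelian group is residually finite. The same hypothesis is needed elsewhere: when you pass to a finite-index subgroup $H_0<H$ in the Rips step of part (2) (to get residual torsion-free-nilpotence), you need $p(H_0)=Q$, which again uses that $Q$ has no proper finite-index subgroups, not just $H_1=H_2=0$. A smaller gap of the same kind: in (1), ``$N$ is infinitely generated, and this obstructs finite presentability of $P$'' is not an argument; the paper invokes Grunewald's theorem that $P<F\times F$ is finitely presentable if and only if $Q$ is finite.

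Second, and more seriously, the two ingredients you defer as ``where the real work lies'' are precisely the substance of the paper's proof, and without them the argument is a plan rather than a proof. For (2) one needs a short exact sequence $1\to N\to H\to Q\to 1$ with $N$ finitely generated and $H$ finitely presented, residually torsion-free-nilpotent, and with solvable conjugacy problem; the paper obtains this from the Haglund--Wise version of the Rips construction (so $H$ is hyperbolic and virtually special) together with the Duchamp--Krob theorem that RAAGs are residually torsion-free-nilpotent, applied to a type-$F_3$, superperfect group $Q$ with no finite quotients and unsolvable \emph{word} problem (it is the word problem of $Q$, via the $1$--$2$--$3$ theorem and the Baumslag--Bridson--Miller--Short analysis, that makes the conjugacy problem in $P$ unsolvable). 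For (3) one needs a finitely presented, superperfect group with no finite quotients and infinite-dimensional $H_3(-,\Q)$; the paper manufactures this as the universal central extension $\tilde\Delta$ of the double of $A\times A$ (with $A$ acyclic and without finite quotients) along a Stallings--Bieri subgroup $S<F\times F$, using a Mayer--Vietoris computation to see $H_3(\Delta,\Q)\cong H_2(S,\Q)$ and a further spectral-sequence argument to carry this to $\tilde\Delta$. Your route for (3) -- keeping $\G=F\times F$ and identifying $H_2(P)$ with $H_1(F,N^{\ab})$, which surjects onto $H_1(Q,N^{\ab})\cong H_3(Q,\Z)$ by dimension shifting along the relation sequence -- is a viable and somewhat more economical alternative to the paper's Proposition on $H_3(G,\Q)$ versus $H_3(Q,\Q)$, but it still stands or falls on producing that designer group $Q$, which you have not done.
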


The following result strengthens items (1) and (3) of the above theorem.

\begin{letterthm} \label{t:H2}
There exist pairs of finitely generated residually
torsion-free-nilpotent groups $N\hookrightarrow \G$ that have the same nilpotent genus and the same profinite
completion, but  $\G$ is finitely presented while $H_2(N,\mathbb Q)$ is infinite dimensional.
\end{letterthm}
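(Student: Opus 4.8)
The plan is to realise each pair as a fibre product inside a direct product of free groups, and to transfer an infinite amount of homology from a carefully engineered ``seed'' quotient $Q$ down to the fibre product by means of the Lyndon--Hochschild--Serre spectral sequence; the classical criterion of Platonov and Tavgen (and its nilpotent analogue) will supply the agreement of profinite completions and of nilpotent genus. Concretely, I would fix a free group $G=F_k$ of finite rank and an epimorphism $p\colon G\epi Q$ onto a finitely presented group $Q$ to be specified, set $\G:=G\times G$, and form the fibre product
\[
N=\{(g,h)\in\G : p(g)=p(h)\},
\]
which is exactly the kernel of the epimorphism $\G\epi Q$, $(g,h)\mapsto p(g)p(h)^{-1}$, so that $1\to N\to\G\to Q\to 1$ is exact. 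As $\G$ is a direct product of free groups it is finitely presented and residually torsion-free-nilpotent, and hence so is its subgroup $N$. Since $G$ is finitely generated and $Q$ is finitely presented, $\ker p$ is finitely generated as a normal subgroup of $G$, and the standard generating set for fibre products (diagonal lifts of generators of $G$ together with lifts of the finitely many normal generators of $\ker p$) shows that $N$ is finitely generated.

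I would then impose on $Q$ the four conditions: (a) finitely presented; (b) $H_2(Q;\Z)=0$; (c) no non-trivial finite quotient; (d) $\dim_\Q H_3(Q;\Q)=\infty$. Condition (c) forces $Q^{\ab}=0$, so $Q$ is perfect and $Q_c=Q$ for all $c$; hence $Q$ has trivial nilpotent and profinite completions. Conditions (a)--(c) are precisely the hypotheses of the Platonov--Tavgen criterion, which yields that $N\hookrightarrow\G$ induces an isomorphism $\wh N\cong\wh\G$, so the two groups have the same profinite completion. The same data, fed into the nilpotent analogue of that argument (using the diagonal embedding $G\hookrightarrow N$ and Stallings' exact sequence, in which $H_2(Q;\Z)=0$ removes the obstruction), gives isomorphisms $N/N_c\cong\G/\G_c$ for every $c$, so $N$ and $\G$ have the same nilpotent genus.

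The homological heart of the argument is the spectral sequence
\[
E^2_{p,q}=H_p\!\left(Q;H_q(N;\Q)\right)\ \Longrightarrow\ H_{p+q}(\G;\Q).
\]
By the K\"unneth formula $H_3(\G;\Q)=0$, so every $E^\infty$-term in total degree $3$ vanishes. The corner term $E^2_{3,0}=H_3(Q;\Q)$ is infinite-dimensional by (d), it receives no differential, and its only non-zero outgoing differentials are $d_2\colon E^2_{3,0}\to E^2_{1,1}=H_1(Q;H_1(N;\Q))$ and $d_3\colon E^3_{3,0}\to E^3_{0,2}$, where $E^3_{0,2}$ is a subquotient of the coinvariants $H_2(N;\Q)_Q$. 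Now $H_1(N;\Q)$ is a finite-dimensional $\Q Q$-module, as $N$ is finitely generated, and since $Q$ is finitely presented (hence of type $FP_2$) the space $H_1(Q;V)$ is finite-dimensional for every finite-dimensional $V$; thus $E^2_{1,1}$ is finite-dimensional. As $E^\infty_{3,0}=0$ and $E^2_{1,1}$ is finite-dimensional while $E^2_{3,0}$ is infinite-dimensional, the differential $d_3$ must have infinite-dimensional image, forcing $E^3_{0,2}$, and therefore $H_2(N;\Q)_Q$ and $H_2(N;\Q)$, to be infinite-dimensional.

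The main obstacle is the construction of a seed group $Q$ meeting (a)--(d) at once. Infinite-dimensional $H_3(\cdot;\Q)$ is readily produced inside a finitely presented group --- for instance in a Bieri--Stallings or Bestvina--Brady kernel that is of type $F_2$ but not $FP_3$ over $\Q$ --- but every such group has large abelianisation, non-trivial $H_2$, and an abundance of finite quotients, all of which violate (b) and (c). The delicate point is to annihilate $H_1$, $H_2$ and all finite quotients while preserving the infinitude of $H_3$, i.e. to build a finitely presented group that is ``$2$-acyclic but not $3$-acyclic'' and has no finite quotient. I would attempt this by embedding a group with infinite $H_3(\cdot;\Q)$ into a finitely presented superperfect group with no finite quotients via a relative acyclic (binate-type) embedding arranged to stay injective on $H_3$, checking at each stage that finite quotients are destroyed and that $H_1$ and $H_2$ die. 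Once such a $Q$ is in hand, taking $k$ large enough that $Q$ is a quotient of $F_k$ and running the three steps above yields a pair $N\hookrightarrow\G$ of the required kind; verifying that the acyclification genuinely preserves an infinite-dimensional $H_3$ is the step I expect to demand the most care.
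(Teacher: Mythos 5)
Your reduction of the problem to finding a seed group $Q$ that is finitely presented, superperfect, without non-trivial finite quotients, and with $\dim_\Q H_3(Q;\Q)=\infty$ is exactly the right one, and your spectral-sequence mechanism for pushing the infinite $H_3$ of a quotient down into the $H_2$ of a kernel is also correct in principle. But there is a fatal flaw in the group-theoretic setup: the fibre product $N=\{(g,h):p(g)=p(h)\}$ is \emph{not} normal in $\G=G\times G$ when $Q$ is non-abelian, and your $Q$ is perfect and non-trivial, hence non-abelian. The map $(g,h)\mapsto p(g)p(h)^{-1}$ is not a homomorphism; conjugating $(g,h)\in N$ by $(a,b)$ stays in $N$ only when $p(b)^{-1}p(a)$ centralises $p(g)$. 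Equivalently, $N/(\ker p\times\ker p)$ is the diagonal copy of $Q$ in $\G/(\ker p\times\ker p)\cong Q\times Q$, which is not normal. So there is no extension $1\to N\to\G\to Q\to 1$, and the spectral sequence $H_p(Q;H_q(N;\Q))\Rightarrow H_{p+q}(\G;\Q)$ on which your whole homological argument rests does not exist. (Using the genuine kernel $\ker p\times\ker p$ of $\G\epi Q\times Q$ instead would make the spectral sequence argument work, but that kernel is infinitely generated, so the theorem is lost.) The paper resolves this tension by abandoning $F_k\times F_k$ in favour of a Rips-type extension: the Haglund--Wise version of the Rips construction yields $1\to N\to H\to Q\to 1$ with $N$ finitely generated and genuinely normal, and with $H$ hyperbolic of cohomological dimension $2$ (so $H_3(H;\Q)=0$, playing the role of your K\"unneth vanishing) and virtually special, hence residually torsion-free-nilpotent after passing to a finite-index subgroup. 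Your spectral-sequence argument then goes through verbatim (it is Proposition \ref{p:ss}), and Propositions \ref{p:GPnilp} and \ref{l:pt}(2) supply the genus and profinite statements.

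The second gap is that you never construct $Q$; you rightly identify this as the hard point but only gesture at binate-type acyclic embeddings. The paper builds it explicitly: take a finitely presented acyclic group $A$ with no finite quotients containing a rank-$2$ free group $F$, double $A\times A$ along the Stallings--Bieri group $S<F\times F$ (finitely generated, with $H_2(S;\Q)$ infinite-dimensional) to get $\Delta$ with $H_3(\Delta;\Q)\cong H_2(S;\Q)$ by Mayer--Vietoris, then pass to the universal central extension $\tilde\Delta$ to kill $H_2(\Delta;\Z)\cong\Z^3$, checking by a further spectral-sequence argument that $H_3$ remains infinite-dimensional. Finally, note that the fibre-product route you prefer \emph{can} be salvaged, but by a different homological argument: as in Remark \ref{r:Chuck2}, one writes the fibre product over the Higman group as an HNN extension and uses Mayer--Vietoris to map $H_2(P;\Z)$ onto the first homology of the infinitely generated free group $\ker(F\to A)$; in that approach no hypothesis on $H_3(Q)$ is needed at all.
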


In the above theorems, the pairs of discrete groups that we construct  are such that the inclusion map 
induces isomorphisms of both the  profinite and pro-nilpotent completions.
These results emphasise how divergent the behaviour can be within a nilpotent genus. Our solution to
Problem \ref{baumslag4}, in contrast, establishes a commonality among parafree groups, and further commonalities
are established in Section 8.

\begin{letterthm}\label{t:N} Let $G$ be a finitely generated parafree group and let
$N<G$ be a non-trivial normal subgroup. If $N$ is finitely generated, then
$G/N$ is finite.
\end{letterthm}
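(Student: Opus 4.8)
The strategy is to derive a contradiction from the hypothesis that $N$ has infinite index, using the first $L^2$-Betti number of $G$ as the decisive invariant. Let $G$ be finitely generated parafree of rank $r$, so that $G/G_c\cong F/F_c$ for all $c$, where $F$ is free of rank $r$, and in particular $G^{\ab}\cong\Z^r$. The cases $r\le 1$ are immediate: if $r=0$ then $G=1$, while if $r=1$ then the natural surjections $G/G_{c+1}\epi G/G_c\cong\Z$ are isomorphisms (a surjective endomorphism of $\Z$ is injective), forcing $G_2=G_c$ for all $c$, whence $G_2=\bigcap_c G_c=1$ by residual nilpotence and $G\cong\Z$; in either case every non-trivial subgroup has finite index. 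Assume therefore that $r\ge 2$, and suppose for contradiction that $N\neq 1$ is finitely generated and normal with $[G:N]=\infty$. Since parafree groups are torsion-free, $N$ is infinite, and $Q:=G/N$ is infinite.

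Two inputs now collide. The first is a vanishing theorem of L\"uck: if a group possesses a finitely generated infinite normal subgroup of infinite index, then its first $L^2$-Betti number is zero; applied to $1\to N\to G\to Q\to 1$ this gives $\beta_1^{(2)}(G)=0$. The second, which carries the real content, is that $\beta_1^{(2)}(G)=r-1$; for $r\ge 2$ this is positive, contradicting the first input and proving the theorem. Everything thus reduces to computing $\beta_1^{(2)}(G)$.

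For this I would exploit that nilpotent genus determines, for each prime $p$, the entire tower of finite $p$-quotients: the quotients $G/\gamma_n^p(G)$ by the mod-$p$ lower central series are isomorphic to $F/\gamma_n^p(F)$, so $G$ and $F$ have the same pro-$p$ completion. Residual torsion-free-nilpotence makes $G$ residually $p$, so $(\gamma_n^p(G))_n$ is a chain of finite-index normal subgroups with trivial intersection. For the upper bound one runs L\"uck's mod-$p$ approximation along this chain: because $\dim_{\F_p}H_1(U;\F_p)$ for finite-index $U$ is the mod-$p$ abelianization of the corresponding open subgroup and hence depends only on the pro-$p$ completion, the resulting $\F_p$-homology gradient equals that of $F$, yielding $\beta_1^{(2)}(G)\le r-1$; the free-group value $\beta_1^{(2)}(F)=r-1$ is classical.

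The lower bound $\beta_1^{(2)}(G)\ge r-1$ is the main obstacle. It cannot be read off from a presentation, for a finitely generated parafree group need not be finitely presented and may fail to be of type $\mathrm{FP}_2$ over $\Q$ (so that $H_2(-,\Q)$ can be infinite-dimensional); consequently the usual rational approximation theorems are unavailable. The point to push on is that the relevant deficiency-type lower bound for $\beta_1^{(2)}$ should be governed by the pro-$p$ completion rather than by any genuine presentation of $G$: since $G$ shares its pro-$p$ completion with $F$, the free-group bound $r-1$ is inherited. Matching this with the upper bound forces $\beta_1^{(2)}(G)=r-1$, and in particular $\beta_1^{(2)}(G)>0$, which is all that the contradiction requires.
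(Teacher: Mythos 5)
Your overall architecture is the same as the paper's: reduce to $r\ge 2$, apply Gaboriau's vanishing theorem to $1\to N\to G\to G/N\to 1$ to get $b_1^{(2)}(G)=0$ if $N$ were finitely generated of infinite index, and contradict this by showing $b_1^{(2)}(G)>0$ using the fact that $G$ embeds densely in $\wh{F}_p$ for $F$ free of rank $r$. But the decisive step --- the lower bound $b_1^{(2)}(G)\ge r-1$ --- is not proved in your write-up; you correctly identify it as the main obstacle and then assert that the bound ``should be governed by the pro-$p$ completion,'' which is the conclusion, not an argument. Moreover, your stated reason for the difficulty is a misdiagnosis: you claim that because $G$ need not be finitely presented ``the usual rational approximation theorems are unavailable,'' whereas the one-sided rational approximation theorem of L\"uck and Osin is valid for any finitely generated residually finite group and gives exactly the inequality you need, namely $\limsup_m b_1(N_m)/[G:N_m]\le b_1^{(2)}(G)$ for any chain of finite-index normal subgroups $N_m$ with $\bigcap_m N_m=1$. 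The paper's proof runs as follows: let $N(d)<F$ be the intersection of the normal subgroups of $p$-power index at most $d$ and set $L(d)=G\cap\overline{N(d)}$ inside $\wh{F}_p$; then $\bigcap_d L(d)=1$ (because the $\overline{N(d)}$ form a neighbourhood basis of $1$, which would fail for an arbitrary exhausting chain), $[G:L(d)]=[F:N(d)]$ by density of both subgroups, and $b_1(L(d))\ge b_1(N(d))$ because $L(d)$ is dense in $\overline{N(d)}\cong\wh{N(d)}_p$, so every surjection of $N(d)$ onto $(\Z/p^k\Z)^b$ restricts to a surjection of $L(d)$. Since $b_1(N(d))/[F:N(d)]\to r-1$ by Nielsen--Schreier, L\"uck--Osin applied to the chain $(L(d))$ in $G$ gives $b_1^{(2)}(G)\ge r-1$. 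Without this (or an equivalent) mechanism your proof does not close.

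Two further remarks. First, your claimed upper bound $b_1^{(2)}(G)\le r-1$ via the mod-$p$ homology gradient along the mod-$p$ lower central series is unsupported: there is no general theorem bounding $b_1^{(2)}$ above by $\F_p$-Betti number gradients for finitely generated groups (this is essentially the positive-characteristic approximation problem), and the paper only asserts equality $b_1^{(2)}(G)=r-1$ when $G$ is finitely presented. Fortunately the upper bound is irrelevant to the theorem, as you note. Second, your treatment of $r\le 1$ and the reduction to $N$ infinite via torsion-freeness are fine and match what the paper leaves implicit.
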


Theorem \ref{t:N} is proved in Section 7.
As with our other results, the proof exploits the theory of profinite groups.
It also relies on results concerning the $L^2$-Betti numbers of discrete groups. A key observation here is
that the first $L^2$-Betti number of a finitely presented residually torsion-free-nilpotent group is an invariant of its
pro-$p$ completion for an arbitrary prime $p$ (see Corollary \ref{c:sameL2}). Theorem \ref{t:N} will emerge as a
special case of a  result concerning $L^2$-Betti numbers for dense subgroups of free pro-$p$ groups (Theorem \ref{t:general}).

In Section 8 we discuss some implications of our results. The
non-vanishing of the first $L^2$-Betti number for a parafree group
in the same nilpotent genus as a free group of 
rank $r\ge 2$ is shown to have several important consequences, two of
which are gathered in the following theorem (the second part of which
was originally proved by Baumslag \cite{Baum69}). We also apply our
results to the study of homology boundary links and prove that a
non-free parafree group cannot be isomorphic to a lattice in a
connected Lie group.

\begin{letterthm}\label{t:new} Let $G$ be a finitely generated parafree group
in the same nilpotent genus as a free group of rank
$r\ge 2$. Then the reduced $C^*$-algebra of $G$ is simple,
and every generating set for $G$  contains an $r$-element
subset that generates a free subgroup of rank $r$.
\end{letterthm}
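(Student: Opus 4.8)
The plan is to prove the two assertions separately, both resting on facts established earlier in the paper. The $C^*$-simplicity is essentially a formal consequence of the $L^2$-input. Since $G$ is parafree of rank $r\ge 2$, the computation recorded above gives $\beta_1^{(2)}(G)=r-1\ge 1>0$. Moreover $G$, being residually torsion-free-nilpotent, is torsion-free, so it has no non-trivial finite normal subgroup, and it is non-elementary because it contains non-abelian free subgroups. I would then invoke the theorem relating positive first $L^2$-Betti number to $C^*$-simplicity (for instance via the work of Peterson--Thom): a torsion-free group with $\beta_1^{(2)}>0$ has simple reduced $C^*$-algebra. The only thing to check is that the hypotheses of the cited result hold, which is immediate from the remarks just made; note in particular that $\beta_1^{(2)}(G)>0$ already forces $G$ to have no infinite amenable normal subgroup, so its amenable radical is trivial.

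For the second assertion, let $S$ be any generating set of $G$ and let $F$ be free of rank $r$ with fixed basis $x_1,\dots,x_r$. Using $G/\gamma_2(G)\cong F/\gamma_2(F)\cong\Z^r$, the images of $S$ generate $H_1(G;\Q)\cong\Q^r$ and hence span; I may therefore select $s_1,\dots,s_r\in S$ whose images form a $\Q$-basis, so that they are linearly independent in $G/\gamma_2(G)$. Let $\phi\colon F\to G$ be the homomorphism with $\phi(x_i)=s_i$, and put $H=\phi(F)$. The claim is that $\phi$ is injective, whence $H$ is free of rank $r$. To prove this I would show that the induced maps $\bar\phi_c\colon F/\gamma_c(F)\to G/\gamma_c(G)$ are injective for every $c$, so that $\ker\phi\subseteq\bigcap_c\gamma_c(F)=1$ by residual nilpotence of $F$.

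The injectivity of each $\bar\phi_c$ I would obtain by passing to associated graded Lie rings. Because $G$ and $F$ have the same nilpotent genus, the isomorphisms $G/\gamma_c(G)\cong F/\gamma_c(F)$ respect the (characteristic) lower central series and hence identify $\bigoplus_c \gamma_c(G)/\gamma_{c+1}(G)$ with the free Lie ring $L$ of rank $r$, which is torsion-free in each degree. The map induced by $\phi$ on associated graded Lie rings is then the endomorphism of $L$ determined in degree one by $x_i\mapsto\bar s_i$. Since the $\bar s_i$ are linearly independent, this degree-one map is a $\Q$-linear isomorphism of $\Q^r$, so after tensoring with $\Q$ it extends to an automorphism of $L\otimes\Q$; as $L$ is torsion-free, the integral map is injective in every degree. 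A standard filtration argument (if $1\ne w\in\ker\bar\phi_c$, look at the top degree in which $w$ survives) then upgrades injectivity on the associated graded to injectivity of $\bar\phi_c$.

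The main work lies in the second assertion, and the point requiring care is the reduction to the free Lie ring: one must know that the associated graded Lie ring of a rank-$r$ parafree group really is the free Lie ring of rank $r$, so that a linear isomorphism in degree one propagates to injectivity in all degrees, and one must arrange the $s_i$ to be linearly independent in homology rather than merely jointly generating (secured by the fact that a spanning subset of $\Q^r$ contains a basis). By contrast, the $C^*$-simplicity is formal once $\beta_1^{(2)}(G)>0$ is in hand; the substantive input there is the $L^2$-Betti number computation carried out earlier.
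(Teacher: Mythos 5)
Your treatment of the two assertions diverges from the paper in opposite directions. For the Freiheitssatz part, the paper simply quotes Corollary 4.7 of Peterson--Thom, which gives the conclusion for any finitely generated group satisfying their condition $(\star)$ with $b_1^{(2)}>r-2$; your argument is instead the classical Magnus--Stallings route through the associated graded Lie ring, and it is correct. The one point you flag yourself is handled most cleanly by noting that the canonical surjection from the free Lie ring on $G/\gamma_2(G)\cong\Z^r$ onto $\bigoplus_c\gamma_c(G)/\gamma_{c+1}(G)$ is, in each degree, a surjection of free abelian groups of the same finite (Witt) rank --- since $G/\gamma_{c}(G)\cong F/\gamma_{c}(F)$ and the lower central series is verbal --- hence an isomorphism; from there your rational-automorphism-of-$L\otimes\Q$ argument and the top-degree-survival step do give injectivity of each $F/\gamma_c(F)\to G/\gamma_c(G)$, and injectivity of $\phi$ follows from residual nilpotence of $F$. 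This buys a self-contained proof of the second assertion that needs no $L^2$-theory at all (and in fact shows that \emph{any} $r$ elements independent in $H_1(G,\Q)$ freely generate); what it loses is the generality of the paper's approach, which applies verbatim to the ``paralimit'' and ``parasurface'' groups discussed at the end of Section 8, where the associated graded Lie ring is not free.

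For the $C^*$-simplicity, however, your verification of hypotheses has a genuine gap. The assertion ``a torsion-free group with $b_1^{(2)}>0$ has simple reduced $C^*$-algebra'' is not the Peterson--Thom theorem and is not a known result: their Corollary 4.6 requires, in addition to $b_1^{(2)}>0$, their condition $(\star)$ --- every non-zero element of $\Z[G]$ acts without kernel on $\ell^2(G)$ --- and torsion-freeness alone is not known to imply this zero-divisor-type hypothesis. Your remark that $b_1^{(2)}>0$ forces the amenable radical to be trivial does not close the gap either, since a trivial amenable radical does not imply $C^*$-simplicity. The missing step is exactly the one the paper supplies: $G$ is residually torsion-free-nilpotent by the definition of parafree, such groups satisfy $(\star)$, and only then does Corollary 4.6 of Peterson--Thom apply.
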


The first part of the paper is organised as follows. In Section 2 we recall some basic properties about profinite and pro-nilpotent
completions of discrete groups and about the correspondence between the subgroup structure of the discrete
group and that of its various completions. In Section 3 we describe criteria for constructing distinct groups of the same
nilpotent genus. In particular we prove that if a map of finitely
generated discrete groups $P\hookrightarrow\G$ induces an isomorphism
of profinite completions, then $P$ and $\G$ have the same nilpotent genus. (If ones assumes merely
that $\wh{P}\cong\wh{\G}$, then the genus need not be the same.)
Our proof of Theorem \ref{main} involves the construction of carefully crafted pairs of residually torsion-free-nilpotent
groups $u:P\hookrightarrow\G$ such that $\wh{u}:\wh{P}\to\wh{\G}$ is an isomorphism. Theorem \ref{main}(1) is proved
in Section 5, and Theorem \ref{main}(2) is proved in Section 4. The proof of Theorem \ref{t:H2} is more elaborate: it
involves the construction of a finitely presented group with particular properties that may be of independent
interest (Proposition \ref{p:newD}). 

In relation to Theorem A(1), we draw the
reader's attention to a recent paper of
 Alexander Lubotzky  \cite{alex} in which he constructs pairs of groups
that have  isomorphic profinite completions but have
finiteness lengths that can be chosen arbitrarily.
These examples are $S$-arithmetic groups over function fields; they
do not have infinite proper quotients and therefore are not residually
torsion-free nilpotent, but they are residually finite-nilpotent.

Commenting on an earlier version of this manuscript, Chuck Miller pointed out
that a non-constructive solution to Problem 1.2 is implicit in
 his work with Baumslag on the isomorphism problem
for residually torsion-free nilpotent groups \cite{BaM}; see Remark \ref{r:Chuck1}. He also suggested an alternative 
proof of Theorem B; see Remark \ref{r:Chuck2}. We thank him for these comments. We also thank an anonymous referee whose comments improved Section 3.

\bigskip

\noindent{{\bf{Acknowledgement.}}} The authors thank the Mittag--Leffler Institute (Djursholm, Sweden) for
its hospitality during the drafting of this paper.

\section{Profinite and Pro-Nilpotent Completions} 
Let $\G$ be a finitely generated group. If one orders the normal
subgroups of finite index $N<\G$ 
by reverse inclusion, then the quotients 
$\G/N$ form an inverse system whose inverse limit
$$
\wh{\G} = \ilim \G/N$$
is the  {\em profinite
completion} of $\G$. Similarly, the {\em pro-nilpotent completion}, denoted $\widehat\G_{\nil}$ 
is the inverse limit of the nilpotent quotients of $\G$, and the 
{\em pro-(finite nilpotent)} completion, denoted $\widehat\G_{\fn}$, is the
inverse limit of the finite nilpotent quotients of $\G$. Given a prime $p$, the
the {\em pro-$p$ completion} $\wh{\G}_p$ is the inverse limit of the finite $p$-group
quotients of $\G$.

\begin{remarks}
\begin{enumerate}
\item The inverse limit 
topology on $\widehat\G$, on $\wh{\G}_p$, and on
$\widehat{\G}_{\fn}$ makes them {\em{compact}} topological groups. 
The induced topologies on $\G$ are called
the profinite, pro-$p$ and {\em pro-(finite nilpotent)} topologies, respectively.
\item We do not view
$\widehat{\G}_{\nil}$ as a topological group, and the absence of a 
useful topology makes it a less interesting object.
\item By construction, $\wh{\G}$ (resp.  $\wh{\G}_{\nil}$) is residually finite (resp. nilpotent), while $\widehat{\G}_{\fn}$ is residually
finite-nilpotent, and $\widehat{\G}_{p}$ is residually $p$.
\item 
The natural homomorphism $\G\to\widehat\G$ is injective if and only if
$\G$ is residually finite, while the natural maps
$\G\to\widehat\G_{\nil}$ and $\G\to\widehat\G_{\fn}$ are injective if
and only if $\G$ is residually nilpotent.
\item $\G\to\wh{\G}_{\nil}$ is an isomorphism if and only if $\G$ is nilpotent.
\item The quotients $\G/\G_c$ of $\G$
by the terms of its lower central series  are cofinal in the system of
all nilpotent quotients, so one can equally define $\widehat\G_{\nil}$ to be the
inverse limit of these.
\item If $\wh{\G}_{\nil}\cong\wh{\Lambda}_{\nil}$ then $\wh{\G}_{\fn}\cong\wh{\Lambda}_{\fn}$, but the converse is false in general since
there exist finitely generated non-isomorphic nilpotent groups that have the same finite quotients \cite{rem}.

\item Less obviously, if $\G$ and $\Lambda$ are finitely generated and
  have the same nilpotent genus then $\wh{\G}_{\fn}\cong
  \wh{\Lambda}_{\fn}$. To see this note that having the same nilpotent genus is
  equivalent to the statement that $\G$ and $\Lambda$ have the same
  nilpotent quotients, in particular the same finite nilpotent
  quotients. The conclusion $\wh{\G}_{\fn}\cong \wh{\Lambda}_{\fn}$
  follows from Theorem 3.2.7 of \cite{RZ}.

\item Finite $p$-groups are nilpotent, so for every prime $p$ there is
  a natural epimorphism $\wh{\G}_{\fn}\epi \wh{\G}_{p}$.  
 Finitely generated groups of the same nilpotent genus have
  isomorphic pro-$p$ completions for all primes $p$.
\item Every homomorphism of discrete groups $u:\G\to\Lambda$ induces maps $\wh{u}:\wh{\G}\to\wh{\Lambda}$
and $\wh{u}_{(p)}:\wh{\G}_{p}\to\wh{\Lambda}_{p}$
and $\wh{u}_{\nil}:\wh{\G}_{\nil}\to\wh{\Lambda}_{\nil}$ and $\wh{u}_{\fn}:\wh{\G}_{\fn}\to\wh{\Lambda}_{\fn}$.
\end{enumerate}
\end{remarks}

The image of the canonical map $\G\to\wh{\G}$ is dense regardless of whether $\G$ is residually
finite or not, so the restriction to $\G$ of any continuous epimomorphism from $\widehat{\Gamma}$
to a  finite group is onto. A deep theorem of Nikolov and Segal
\cite{NS} implies that if $\G$ is finitely generated then
{\em every} homomorphism from $\widehat{\G}$ to a finite group is continuous.
And the universal property of $\wh{\G}$ ensures that every homomorphism from $\G$ to a finite
group extends uniquely to $\wh{\G}$. Thus
we have the following basic result in which $\Hom(\G,Q)$ denotes the set of 
homomorphisms from the group $\G$ to the group $Q$, and $\Epi(\G,Q)$ denotes the set of epimorphisms.
If one replaces $\wh{\G}$ by $\wh{\G}_{\nil}$ or $\wh{\G}_{\fn}$ in the following lemma, one obtains bijections for finite nilpotent groups $Q$;
and for $\wh{\G}_p$ one obtains bijections when $Q$ is a finite $p$-group.

\begin{lemma}\label{NS} Let $\G$ be a finitely generated group and let  $\iota:\G\to\widehat{\Gamma}$
be the natural map to its profinite completion. Then, for every finite group $Q$,
the map $\Hom(\widehat{\Gamma},Q)\to \Hom(\G,Q)$ defined by $g\mapsto g\circ\iota$ is a bijection,
and this restricts to a bijection $\Epi(\widehat{\Gamma},Q)\to \Epi(\G,Q)$. \end{lemma}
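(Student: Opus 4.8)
The plan is to establish the bijection in three layers, building up from the continuous case to the general case using the two external inputs cited just before the lemma: the universal property of the profinite completion and the Nikolov--Segal theorem.

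First I would treat the continuous homomorphisms. The universal property of $\wh{\G}$ states that every homomorphism from $\G$ to a finite (hence profinite) group $Q$ extends uniquely to a continuous homomorphism $\wh{\G}\to Q$. Dually, precomposition with the canonical map $\iota:\G\to\wh{\G}$ sends any continuous homomorphism $g:\wh{\G}\to Q$ to $g\circ\iota:\G\to Q$. These two operations are mutually inverse: the extension of $g\circ\iota$ agrees with $g$ by the uniqueness clause, and the restriction of an extension recovers the original map on $\G$. Thus restriction gives a bijection $\Hom_{\mathrm{cts}}(\wh{\G},Q)\to\Hom(\G,Q)$, where $\Hom_{\mathrm{cts}}$ denotes continuous homomorphisms.

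Second, I would remove the continuity hypothesis. This is where the Nikolov--Segal theorem does the essential work: since $\G$ is finitely generated, $\wh{\G}$ is topologically finitely generated, and their theorem asserts that every abstract homomorphism from $\wh{\G}$ to a finite group is automatically continuous. Therefore $\Hom_{\mathrm{cts}}(\wh{\G},Q)=\Hom(\wh{\G},Q)$, and the bijection of the previous paragraph becomes the desired bijection $\Hom(\wh{\G},Q)\to\Hom(\G,Q)$, $g\mapsto g\circ\iota$.

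Finally I would check that this bijection restricts correctly to epimorphisms. For one direction, if $g:\wh{\G}\to Q$ is onto, then since $\iota(\G)$ is dense in $\wh{\G}$ (regardless of whether $\G$ is residually finite) and $Q$ is finite and discrete, the image $g(\iota(\G))$ is both dense and closed in $Q$, hence all of $Q$; so $g\circ\iota$ is surjective. Conversely, if $g\circ\iota$ is onto, then a fortiori $g$ is onto. Since surjectivity is preserved in both directions, the bijection on $\Hom$ carries $\Epi(\wh{\G},Q)$ onto $\Epi(\G,Q)$ bijectively, completing the proof. The only genuinely nontrivial ingredient is the automatic-continuity step, which rests entirely on the cited Nikolov--Segal result; everything else is formal manipulation of the universal property together with density of the image of $\iota$.
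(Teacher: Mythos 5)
Your proof is correct and follows exactly the route the paper takes: the authors prove this lemma implicitly in the paragraph preceding its statement, invoking the same three ingredients in the same roles — the universal property of $\widehat{\Gamma}$ for the bijection on continuous homomorphisms, the Nikolov--Segal theorem to upgrade this to all homomorphisms, and density of $\iota(\Gamma)$ to handle the restriction to epimorphisms. No discrepancies to report.
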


Closely related to this, we have the following basic but important fact relating
the subgroup structures of $\G$ and $\widehat\G_{\fn}$ and $\wh{\G}_p$
(see \cite{RZ} Proposition 3.2.2, and note that the argument is valid
for other profinite completions).\\[\baselineskip]
\noindent{\bf{Notation.}}  Given a subset $X$ of a pro-finite group $G$, we write   $\overline X$ to denote the closure of $X$
in $G$.

\begin{proposition}
\label{correspondence} Let $\mathcal C$ be the class of finite nilpotent groups or finite $p$-groups (for a fixed prime $p$).
If $\G$ is a finitely generated discrete group which is residually 
$\mathcal C$, then
there is a one-to-one correspondence between the
set ${\mathcal X}$ of subgroups of $\G$ that are open in the pro-$\mathcal C$
topology on $\G$, and the set ${\mathcal Y}$ of all open subgroups
in the pro-$\mathcal C$ completion of $\G$.
 Identifying $\G$ with its image in the completion,
 this correspondence is given by:

\begin{itemize}

\item For $H\in {\mathcal X}$, $H \mapsto \overline{H}$.

\item For $Y\in {\mathcal Y}$, $Y\mapsto Y \cap \G$.\end{itemize}

\noindent If $H,K\in {\mathcal X}$ and $K<H$ then 
$[H : K] = [\overline{H}:\overline{K}]$. Moreover, $K\vartriangleleft H$
if and only if $\overline{K} \vartriangleleft \overline{H}$, 
and $\overline{H}/\overline{K} \cong H/K$.
\end{proposition}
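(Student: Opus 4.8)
The plan is to set up the standard correspondence between open subgroups of a profinite group and the open subgroups of the dense discrete subgroup $\G$, exploiting the fact that $\G$ is residually $\mathcal C$ (so the map $\G \to \wh\G_{\mathcal C}$ is injective and we may identify $\G$ with its dense image). Write $G$ for the pro-$\mathcal C$ completion $\wh\G_{\mathcal C}$. First I would recall the two features of the situation that drive everything: (a) open subgroups of a profinite group are precisely the closed subgroups of finite index, and every open subgroup contains an open \emph{normal} subgroup (a standard fact about profinite groups); and (b) because $\G$ is dense in $G$, for every open $Y \le G$ the intersection $Y \cap \G$ is dense in $Y$, so $\overline{Y\cap\G} = Y$. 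This last point is the crux of the well-definedness of the map $Y \mapsto Y\cap\G$ landing back in $\mathcal X$ and being inverse to $H \mapsto \overline H$.

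\medskip

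\noindent\textbf{Outline of the correspondence.} For $H \in \mathcal X$, i.e.\ $H \le \G$ open in the pro-$\mathcal C$ topology, I would show $\overline H$ is open in $G$ and that $\overline H \cap \G = H$. Openness of $\overline H$ follows because $H$ being open in the pro-$\mathcal C$ topology means $H$ contains some $\G \cap U$ with $U \le G$ open normal, whence $\overline H \supseteq U$ and a closed subgroup containing an open subgroup is open. For the equality $\overline H \cap \G = H$, the inclusion $\supseteq$ is clear; for $\subseteq$ one uses that $H$ is itself closed in the pro-$\mathcal C$ topology on $\G$ (open subgroups of finite index are closed, being complements of finitely many cosets), so $H = \overline H \cap \G$. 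Conversely, for $Y \in \mathcal Y$ open in $G$, the subgroup $Y \cap \G$ has finite index in $\G$ (since $[G:Y]$ is finite and $\G$ surjects onto $G/Y$ by density) and is open in the pro-$\mathcal C$ topology, so $Y\cap\G \in \mathcal X$; and $\overline{Y\cap\G} = Y$ by density as noted. These two computations show the maps are mutually inverse bijections.

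\medskip

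\noindent\textbf{The quantitative and normality statements.} For $H, K \in \mathcal X$ with $K < H$, I would deduce $[H:K] = [\overline H : \overline K]$ by passing to a common open normal subgroup. Choosing $U \le G$ open normal with $U \cap \G \le K$, both $H$ and $K$ contain $U\cap\G$, and the finite quotient groups $\overline H / U$, $\overline K / U$ are exactly $H/(U\cap\G)$, $K/(U\cap\G)$ under the density identification, so the index computation reduces to a statement inside the finite group $\overline H / U$. Likewise $K \vartriangleleft H$ iff $\overline K \vartriangleleft \overline H$ is checked modulo $U$, where normality becomes a finite-group condition preserved in both directions by density; and the induced isomorphism $\overline H / \overline K \cong H / K$ is the natural map, which is surjective by density of $H$ in $\overline H$ and injective because its kernel is $\overline K \cap H = K$.

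\medskip

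\noindent The main obstacle, and the point deserving the most care, is establishing $\overline{Y \cap \G} = Y$ and $\overline H \cap \G = H$ cleanly --- that is, verifying that intersecting with the dense subgroup $\G$ and taking closure are genuinely inverse operations on the open subgroups in question. Everything else is a routine transfer of index and normality data through a common open normal subgroup $U$, reducing infinite statements to identities in the finite quotient $G/U \cong \G/(U\cap\G)$. Since the entire content is the standard Galois-type correspondence for profinite completions (Proposition 3.2.2 of \cite{RZ}), I would in practice cite that reference, noting as the paper does that the argument is insensitive to which profinite completion one uses and so applies verbatim to the pro-$\mathcal C$ completion for $\mathcal C$ the class of finite nilpotent groups or finite $p$-groups.
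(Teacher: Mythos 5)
Your proposal is correct and matches the paper's treatment: the paper offers no argument of its own beyond citing Proposition 3.2.2 of \cite{RZ} (with the remark that the argument transfers to other profinite completions), and your sketch is precisely that standard Galois-type correspondence, with the key points --- $\overline{H}\cap\G=H$ via closedness of finite-index open subgroups, $\overline{Y\cap\G}=Y$ via density, and reduction of index and normality claims to a finite quotient $G/U$ --- all handled correctly.
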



\begin{corollary}
Let $\G$ be a finitely generated group that is residually nilpotent, and for each $d\in\mathbb \N$ let 
$M(d)$ denote the intersection of all the normal subgroups $\Delta \vartriangleleft\G$
of index $\le d$ such that $\G/\Delta$ is nilpotent. 
Let $\overline{M(d)}$ be the closure of
$M(d)$ in $\wh{\G}_\fn$. Then $\overline{M(d)}$ is the intersection of all the normal subgroups of
index $\le d$ in $\wh{\G}_\fn$, and hence $\bigcap_d \overline{M(d)} = 1$.
\end{corollary}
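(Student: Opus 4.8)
The plan is to derive both assertions from the correspondence of Proposition \ref{correspondence}, taken with $\mathcal C$ the class of finite nilpotent groups, so that the completion in play is $\wh{\G}_{\fn}$; throughout I identify $\G$ with its dense image in $\wh{\G}_{\fn}$. The first thing to record is that, since $\G$ is finitely generated, it has only finitely many subgroups of each finite index, so $M(d)$ is a \emph{finite} intersection $\Delta_1\cap\dots\cap\Delta_k$ of normal subgroups with each $\G/\Delta_i$ finite nilpotent. As $\G/M(d)$ embeds in the finite nilpotent group $\prod_i\G/\Delta_i$, it is itself finite nilpotent; hence $M(d)$ is open in the pro-(finite nilpotent) topology and Proposition \ref{correspondence} applies directly to it, giving $\wh{\G}_{\fn}/\overline{M(d)}\cong\G/M(d)$.

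Next I would set up the bijection between the two families being intersected. On the one side sit the normal subgroups $\Delta\vartriangleleft\G$ of index $\le d$ with $\G/\Delta$ nilpotent; each is open in the pro-(finite nilpotent) topology, so $\overline{\Delta}$ is open normal in $\wh{\G}_{\fn}$ with $[\wh{\G}_{\fn}:\overline{\Delta}]=[\G:\Delta]\le d$. On the other side sit the normal subgroups $N\vartriangleleft\wh{\G}_{\fn}$ of index $\le d$; because $\wh{\G}_{\fn}$ is topologically finitely generated, the theorem of Nikolov--Segal guarantees that any such $N$ is open, so $H:=N\cap\G$ lies in $\mathcal X$ and has index $\le d$, and being open in the pro-(finite nilpotent) topology its quotient $\G/H\cong\wh{\G}_{\fn}/N$ is finite nilpotent. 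Thus $H$ is one of the $\Delta$'s and $\overline{H}=N$, so $\Delta\mapsto\overline{\Delta}$ is a bijection from the first family onto the second.

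The heart of the matter is to commute the closure operation past this finite intersection, that is, to prove $\overline{M(d)}=\bigcap_i\overline{\Delta_i}$. The inclusion $\overline{M(d)}\subseteq\bigcap_i\overline{\Delta_i}$ is immediate from $M(d)\subseteq\Delta_i$. For the reverse inclusion I would compute $N\cap\G$ for $N:=\bigcap_i\overline{\Delta_i}$: the correspondence gives $\overline{\Delta_i}\cap\G=\Delta_i$, so $N\cap\G=\bigcap_i(\overline{\Delta_i}\cap\G)=\bigcap_i\Delta_i=M(d)$. Since $N$ is a finite intersection of open normal subgroups it is open and normal, so the round trip $N\mapsto N\cap\G\mapsto\overline{N\cap\G}$ of Proposition \ref{correspondence} returns $N$, whence $\overline{M(d)}=\overline{N\cap\G}=N=\bigcap_i\overline{\Delta_i}$. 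Combined with the bijection above, this identifies $\overline{M(d)}$ with the intersection of all normal subgroups of index $\le d$ in $\wh{\G}_{\fn}$, proving the first assertion.

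For the final equality $\bigcap_d\overline{M(d)}=1$ I would show the subgroups $\overline{M(d)}$ are cofinal among the open normal subgroups of $\wh{\G}_{\fn}$. Given any open normal $V\vartriangleleft\wh{\G}_{\fn}$, set $d_0=[\wh{\G}_{\fn}:V]$; then $V$ is one of the index-$\le d_0$ subgroups intersected to form $\overline{M(d_0)}$, so $\overline{M(d_0)}\subseteq V$ and therefore $\bigcap_d\overline{M(d)}\subseteq V$. As this holds for every open normal $V$, and the open normal subgroups of a profinite group meet in the identity (the group being Hausdorff), we get $\bigcap_d\overline{M(d)}=1$. I expect the only genuinely nonformal point to be the interchange of closure and intersection in the third paragraph, since in general the closure of an intersection is strictly smaller than the intersection of the closures; here it goes through precisely because the subgroups involved are open, so that Proposition \ref{correspondence} can be inverted. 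Everything else is bookkeeping with that correspondence and with the fact that, by Nikolov--Segal, finite index forces openness in this setting.
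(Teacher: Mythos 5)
Your proof is correct and follows essentially the same route as the paper's: both rest on Proposition \ref{correspondence} together with the Anderson/Nikolov--Segal fact that finite-index subgroups of $\wh{\G}_{\fn}$ are open, the crux in each case being that closure commutes with the finite intersection defining $M(d)$. The only (harmless) difference is that you establish that last point by inverting the Galois correspondence on the open normal subgroup $\bigcap_i\overline{\Delta_i}$, whereas the paper compares the kernels of two continuous maps to $Q_1\times Q_2$ that agree on the dense subgroup $\G$.
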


\begin{proof}  We begin with a preliminary remark.  It is proved in \cite{An} that
for a finitely generated group $\G$,
{\em every} subgroup of finite index in $\widehat{\G}_{\fn}$
is open, and likewise in $\wh{\G}_p$. (See \cite{NS} for the
case of a general profinite group.) Thus, by Proposition \ref{correspondence}, every normal subgroup of index $d$ in $\wh{\G}_{\fn}$
is the closure of a subgroup $K\vartriangleleft\G$ such that $\G/K$ is nilpotent and $|\G/K|=d$,
and every subgroup of index $d=p^k$ in $\wh{\G}_p$ is the closure of a subgroup of
index $p^k$ in  $\G$.

In the light of this remark, it   suffices to show
  that if $K_1,K_2<\G$ are normal and $Q_1=\G/K_1$ and $Q_2=\G/K_2$
  are finite and nilpotent, then $\overline{K_1\cap K_2} =
  \overline{K_1}\cap\overline{K_2}$.  But $\overline{K_1\cap K_2}$ is
  the kernel of the extension of $\G\to Q_1\times Q_2$ to
  $\widehat{\Gamma}_{\fn}$, while $\overline K_1\times \overline K_2$
  is the kernel of the map $\widehat{\Gamma}_{\fn}\to Q_1\times Q_2$
  that one gets by extending each of $\G\to Q_i$ and then taking the
  direct product.  These maps coincide on $\G$, which is dense, and
  are therefore equal.
\end{proof}

The same argument establishes:

\begin{corollary}\label{c:exhaust-p} Let $p$ be a prime and let
$\G$ be a finitely generated group that is residually p. For each $d\in\mathbb \N$ let 
$N(d)$ denote the intersection of all the normal subgroups 
$\Delta \vartriangleleft\G$
of $p$-power index less than $d$. Let $\overline{N(d)}$ be the closure of
$N(d)$ in $\wh{\G}_\fn$. Then $\overline{N(d)}$ is the intersection of all the normal subgroups of
index $\le d$ in $\wh{\G}_\fn$, and hence $\bigcap_d \overline{N(d)} = 1$.
\end{corollary}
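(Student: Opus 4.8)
The plan is to follow verbatim the pattern of the preceding proof, replacing ``finite nilpotent'' by ``finite $p$-group'' throughout and working in the pro-$p$ completion $\wh{\G}_p$, in which every finite quotient is a $p$-group and hence every open normal subgroup has $p$-power index. The one input needed at the outset is the theorem of \cite{An}: for finitely generated $\G$, every subgroup of finite index in $\wh{\G}_p$ is open. Combined with Proposition \ref{correspondence} (applied with $\mathcal C$ the class of finite $p$-groups), this furnishes an index-preserving bijection between the normal subgroups of $p$-power index in $\G$ and the (open, equivalently finite-index) normal subgroups of $\wh{\G}_p$, given by $K\mapsto\overline{K}$ and $Y\mapsto Y\cap\G$.

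Granting this, the whole statement reduces to the single assertion that closure commutes with finite intersections of such subgroups; precisely, that if $K_1,K_2\vartriangleleft\G$ have $p$-power index then $\overline{K_1\cap K_2}=\overline{K_1}\cap\overline{K_2}$. I would prove this exactly as in the previous corollary: both sides are kernels of continuous homomorphisms $\wh{\G}_p\to Q_1\times Q_2$ that extend the map $\G\to(\G/K_1)\times(\G/K_2)$ — the left-hand side as the kernel of the extension of the product map, the right-hand side as the intersection of the kernels of the extensions of the two coordinate maps $\G\to\G/K_i$. These two continuous maps agree on the dense subgroup $\G$, so they are equal, and hence so are their kernels.

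Because $\G$ is finitely generated it has only finitely many subgroups of any given index, so $N(d)$ is a \emph{finite} intersection $K_1\cap\dots\cap K_m$ of normal subgroups of $p$-power index (each of index $<d$). Iterating the two-subgroup identity gives $\overline{N(d)}=\overline{K_1}\cap\dots\cap\overline{K_m}$, and by the correspondence above the $\overline{K_i}$ are exactly the open normal subgroups of $\wh{\G}_p$ of index $<d$ (the distinction between index $<d$ and $\le d$ being immaterial, since only $p$-power indices occur and only the limit over $d$ matters). Thus $\overline{N(d)}$ is their intersection. Finally, since $\wh{\G}_p$ is residually (finite $p$), its open normal subgroups intersect trivially, and these are exhausted as $d\to\infty$; hence $\bigcap_d\overline{N(d)}=1$.

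The only genuinely substantive ingredient is the automatic-openness result of \cite{An}: without it one would control only the \emph{a priori} closed finite-index subgroups of $\wh{\G}_p$, and the middle assertion — that $\overline{N(d)}$ is the intersection of \emph{every} normal subgroup of the relevant index in the completion, not merely the closures of subgroups of $\G$ — could fail. Everything after that is the density argument, which is purely formal.
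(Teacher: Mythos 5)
Your proposal is correct and follows essentially the same route as the paper, which proves this corollary simply by remarking that ``the same argument'' as for the preceding corollary applies: the automatic-openness result of \cite{An}, the correspondence of Proposition \ref{correspondence} for the class of finite $p$-groups, and the density argument showing $\overline{K_1\cap K_2}=\overline{K_1}\cap\overline{K_2}$. Your additional remarks (finiteness of the intersection defining $N(d)$, and the immateriality of the $<d$ versus $\le d$ discrepancy) are accurate glosses on details the paper leaves implicit.
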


\begin{remark} A key feature of the subgroups $M(d)$ [resp. $N(d)$] 
is that they form a fundamental system of open
neighbourhoods of $1\in\G$ defining the pro-(finite nilpotent) [resp. pro-$p$] topology. If
 we had merely taken an exhausting sequence of normal subgroups in $\G$, then
we would not have been able to conclude that the intersection in $\wh{\G}_{\fn}$ [resp. $\wh{\G}_{p}$] of their closures was trivial. 
\end{remark}
 
\subsection{Subgroups of $p$-Power Index}

 An advantage of the class of $p$-groups over the class of finite nilpotent groups is that the
former is closed under group extensions whereas the latter is not. The importance of this fact from our point of view is that 
it means that the induced topology on normal subgroups of $p$-power index $\Lambda<\G$ behaves well. 
The following is a consequence of Lemma 3.1.4(a) of \cite{RZ} 
(in the case where $\mathcal C$ is the class of finite $p$-groups).

\begin{lemma}\label{l:same}
Let $p$ be a prime and let $\G$ be a finitely generated group
that is residually-$p$. 
If $\Lambda<\G$ is a normal subgroup of $p$-power 
index, then the natural map $\wh{\Lambda}_p\to \overline{\Lambda}<\wh{\G}_p$ is an isomorphism.
\end{lemma}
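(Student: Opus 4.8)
The plan is to show that the pro-$p$ topology on $\Lambda$ coincides with the topology it inherits as a subspace of $\wh{\G}_p$; once this is established, the two completions are canonically identified and the natural map is an isomorphism. Since $\G$ is residually-$p$, the map $\G\to\wh{\G}_p$ is injective, and I identify $\G$ with its image. As a closed subgroup of the pro-$p$ group $\wh{\G}_p$, the closure $\overline{\Lambda}$ is itself pro-$p$, so the inclusion $\Lambda\hookrightarrow\G\to\wh{\G}_p$ extends, by the universal property of the pro-$p$ completion, to a continuous homomorphism $\phi:\wh{\Lambda}_p\to\overline{\Lambda}$; this is the natural map in question. Its image is compact, hence closed, and contains the dense subgroup $\Lambda$, so $\phi$ is surjective. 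It remains to prove injectivity, which will follow from the coincidence of topologies.

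Next I would describe the two topologies on $\Lambda$ explicitly. By Proposition \ref{correspondence} the open normal subgroups of $\wh{\G}_p$ are precisely the closures $\overline V$ of normal subgroups $V\vartriangleleft\G$ of $p$-power index, and $\overline V\cap\G=V$; hence a base of neighbourhoods of $1$ for the topology induced on $\Lambda$ by $\wh{\G}_p$ is given by the subgroups $\{V\cap\Lambda\}$. Each such $V\cap\Lambda$ is normal in $\Lambda$ of index dividing $[\G:V]$, so it is open in the full pro-$p$ topology of $\Lambda$; thus the induced topology is coarser than the pro-$p$ topology. The heart of the matter is the reverse inclusion.

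The key step, and the main obstacle, is to show that every normal subgroup $K\vartriangleleft\Lambda$ of $p$-power index contains $V\cap\Lambda$ for some normal $V\vartriangleleft\G$ of $p$-power index. Here I would exploit that the class of finite $p$-groups is closed under extensions, exactly the property isolated at the beginning of this subsection. Let $L=\bigcap_{g\in\G}gKg^{-1}$ be the normal core of $K$ in $\G$. Because $\Lambda\vartriangleleft\G$, every conjugate $gKg^{-1}$ lies in $\Lambda$ with index $[\Lambda:K]$, and since $[\G:\Lambda]<\infty$ only finitely many conjugates occur; therefore $\Lambda/L$ embeds in a finite product of the finite $p$-groups $\Lambda/gKg^{-1}$ and is itself a finite $p$-group. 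Now $L\vartriangleleft\G$ sits in the extension
$$1\to\Lambda/L\to\G/L\to\G/\Lambda\to1,$$
whose outer terms are finite $p$-groups, so $\G/L$ is a finite $p$-group. Thus $V:=L$ is open of $p$-power index in $\G$, and $V\cap\Lambda=L\le K$, as required.

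Having shown that the two topologies on $\Lambda$ agree, I would finish by comparing the defining inverse systems. Indeed $\overline{\Lambda}=\ilim_V \Lambda/(V\cap\Lambda)$, the limit taken over normal subgroups $V\vartriangleleft\G$ of $p$-power index, whereas $\wh{\Lambda}_p=\ilim_K\Lambda/K$ over all normal $K\vartriangleleft\Lambda$ of $p$-power index. The subgroups $V\cap\Lambda$ are among the $K$, and by the previous paragraph every $K$ contains some $V\cap\Lambda$; hence the two systems are mutually cofinal and $\phi$ is an isomorphism. The only genuinely delicate point is the extension-closure argument of the third paragraph; everything else is formal manipulation of the completions together with the correspondence of Proposition \ref{correspondence}.
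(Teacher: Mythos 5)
Your proof is correct. The paper offers no argument for this lemma at all --- it simply cites Lemma 3.1.4(a) of \cite{RZ} --- but your normal-core-plus-extension argument (showing every normal $K\vartriangleleft\Lambda$ of $p$-power index contains a normal subgroup of $p$-power index in $\G$, so the two topologies on $\Lambda$ agree and the defining inverse systems are cofinal) is precisely the standard proof of that cited result, and it turns on exactly the feature the authors single out at the start of the subsection, namely that the class of finite $p$-groups is closed under extensions.
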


\subsection{Betti numbers}
The first Betti number of a finitely generated group is
$$b_1(\Gamma) = \hbox{dim}_{\mathbb Q}\ [ (\Gamma/[\Gamma,\Gamma]) \otimes_{\mathbb Z} {\mathbb Q}].$$
Given any prime $p$, one can detect $b_1(\G)$  in the $p$-group quotients of $\G$, since
it is the {\em greatest integer $b$ such that $\G$ surjects $(\Z/p^k\Z)^b$ 
for every $k\in\N$.} We exploit this observation as follows:

\begin{lemma}\label{l:denseb1}
Let $\Lambda$ and $\G$ be finitely generated groups and let $p$ be a prime. If $\Lambda$ is isomorphic to a dense
subgroup of $\widehat{\G}_p$, then $b_1(\Lambda) \ge b_1(\Gamma)$.
\end{lemma}

\begin{proof} For every finite $p$-group $A$, each epimorphism
  $\widehat{\Gamma}\to A$ will restrict to an epimorphism on both
  $\Gamma$ and $\Lambda$ (since by density $\Lambda$ cannot be contained in a
  proper closed subgroup). But the resulting map
  $\Epi(\widehat{\Gamma},A)\to\Epi(\Lambda,A)$ need not be surjective,
  in contrast to Lemma \ref{NS}.  Thus if $\G$ surjects $(\Z/p^k\Z)^b$
  then so does $\Lambda$ (but perhaps not vice versa).
\end{proof}

\begin{corollary}\label{sameb1} If $\Lambda$ and $\G$ are finitely generated and $\widehat{\Lambda}_p\cong\widehat{\Gamma}_p$,
then $b_1(\Lambda)=b_1(\G)$.
\end{corollary}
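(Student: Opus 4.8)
The plan is to derive Corollary~\ref{sameb1} from Lemma~\ref{l:denseb1} by applying that lemma in both directions. The asserted equality $b_1(\Lambda)=b_1(\G)$ will follow from the two inequalities $b_1(\Lambda)\ge b_1(\G)$ and $b_1(\G)\ge b_1(\Lambda)$, and the hypothesis $\widehat{\Lambda}_p\cong\widehat{\Gamma}_p$ is exactly what makes the situation symmetric in $\Lambda$ and $\G$. So I would begin by fixing an isomorphism $\phi:\widehat{\Lambda}_p\to\widehat{\Gamma}_p$, which I will use to transport surjections onto elementary abelian-type $p$-groups from one side to the other.

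For the first inequality, I would use that the canonical map $\Lambda\to\widehat{\Lambda}_p$ has dense image regardless of any residual hypothesis on $\Lambda$; composing with $\phi$ realises $\Lambda$ as a dense subgroup of $\widehat{\G}_p$. Lemma~\ref{l:denseb1} then gives $b_1(\Lambda)\ge b_1(\G)$ directly. Reversing the roles of $\Lambda$ and $\G$ and using $\phi^{-1}$ (so that $\G$ maps densely into $\widehat{\Lambda}_p$) yields $b_1(\G)\ge b_1(\Lambda)$, and combining the two gives the corollary. Concretely, an epimorphism $\G\epi(\Z/p^k\Z)^b$ extends over $\widehat{\G}_p$, transports across $\phi$ to an epimorphism of $\widehat{\Lambda}_p$, and restricts along the dense image to an epimorphism $\Lambda\epi(\Z/p^k\Z)^b$; since $b_1$ is the largest $b$ admitting such surjections for every $k\in\N$, the invariance is immediate.

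The one point that I expect to require care — and which is really the only obstacle — is that Lemma~\ref{l:denseb1} is phrased for a group $\Lambda$ that \emph{is isomorphic to} a dense subgroup of $\widehat{\G}_p$, i.e.\ under the tacit assumption that $\Lambda\to\widehat{\Lambda}_p$ is injective (equivalently that $\Lambda$ is residually-$p$), whereas the corollary makes no such assumption and $\Lambda\to\widehat{\Lambda}_p$ genuinely can fail to be injective. I would resolve this by observing that the proof of Lemma~\ref{l:denseb1} uses only the \emph{density} of the image of $\Lambda$, not its injectivity: the map $\Epi(\widehat{\G}_p,A)\to\Epi(\Lambda,A)$ is defined by restriction to the image of $\Lambda$, and it lands in epimorphisms precisely because that image meets every coset of the (finite) target, which is density alone. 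With this remark the two inequalities go through verbatim. Alternatively, one may sidestep the lemma entirely and argue that $b_1(-)$ is an invariant of the pro-$p$ completion by combining the characterisation ``$b_1(\G)$ is the greatest $b$ with $\G\epi(\Z/p^k\Z)^b$ for all $k$'' with the pro-$p$ analogue of Lemma~\ref{NS}, which identifies $\Epi(\G,A)$ with $\Epi(\widehat{\G}_p,A)$ for every finite $p$-group $A$.
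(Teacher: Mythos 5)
Your proof is correct and is essentially the argument the paper intends: the corollary is stated as an immediate consequence of Lemma~\ref{l:denseb1}, obtained by applying that lemma symmetrically in both directions via the isomorphism $\widehat{\Lambda}_p\cong\widehat{\Gamma}_p$. Your additional observation --- that the lemma's proof uses only density of the image (or, equivalently, that one may apply it to the image of $\Lambda$ in $\widehat{\Lambda}_p$ and use that $b_1$ of a quotient is at most $b_1$ of the group) --- correctly handles a point the paper leaves implicit when $\Lambda$ or $\G$ fails to be residually~$p$.
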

 
\section{Criteria for pro-nilpotent equivalence}

The proof of the following proposition uses the
Lyndon-Hochschild-Serre (LHS) spectral sequence, which is explained on page 171 of \cite{brown}. 
Given a short exact sequence of groups $1\to N\to G\to Q\to 1$, the LHS spectral sequence calculates the homology of
$G$ in terms of the homology of $N$ and of $Q$. 
The terms on the $E^2$ page of the spectral sequence are
$E^2_{pq} = H_p(Q, H_q(N,\mathbb Z))$, where the action of $Q$ on $H_*(N,\mathbb Z)$
is induced by the action of $G$  on
$N$ by conjugation. 
The terms on the  diagonal $p+q=n$ of the $E^\infty$ page are the composition factors of a series for $H_n(G,\Z)$.

\begin{proposition}\label{p:GPnilp}  Let  $1\to N\overset{u}\to \G \to Q\to 1$ be a short exact sequence of groups
and let $u_c: N/N_c\to \G/\G_c$ be the homomorphism induced by  $u:N\hookrightarrow\G$.
Suppose that $N$ is finitely generated, that $Q$ has no non-trivial finite quotients, and that $H_2(Q,\Z)=0$.
Then  $u_c$ is an isomorphism for all $c\ge 1$, and hence  $\wh{u}_{\nil} : \wh{N}_{\nil} \to \wh{\G}_{\nil}$ is 
an isomorphism.
In particular, if $\G$ is residually nilpotent then $N$ and $\G$
have the same nilpotent genus.
\end{proposition}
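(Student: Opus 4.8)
The plan is to deduce the statement from the interaction between integral homology and the lower central series. Specifically, I would first verify that the inclusion $u\colon N\hookrightarrow\G$ induces an \emph{isomorphism} on $H_1(-,\Z)$ and an \emph{epimorphism} on $H_2(-,\Z)$. Granting these two facts, the classical comparison theorem of Stallings relating homology and lower central quotients (equivalently, an induction on $c$ comparing the central extensions $1\to\G_c/\G_{c+1}\to\G/\G_{c+1}\to\G/\G_c\to 1$ with their analogues for $N$, the point being that the associated graded Lie rings are generated in degree one) shows that $u_c\colon N/N_c\to\G/\G_c$ is an isomorphism for every $c$, and passing to the inverse limit gives that $\wh{u}_{\nil}$ is an isomorphism. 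The final clause is then formal: if $\G$ is residually nilpotent then $\bigcap_c\G_c=1$, and injectivity of the $u_c$ says exactly that $N\cap\G_c=N_c$, so $\bigcap_c N_c = N\cap\bigcap_c\G_c=1$ and $N,\G$ have identical lower central quotients, i.e.\ the same nilpotent genus.

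Everything therefore rests on the two homological facts, and here I would use the Lyndon--Hochschild--Serre spectral sequence $E^2_{pq}=H_p(Q,H_q(N,\Z))\Rightarrow H_{p+q}(\G,\Z)$. The step I expect to carry the real content is a preliminary observation that $Q$ acts \emph{trivially} on $H_1(N,\Z)=N_{\ab}$. Since $N$ is finitely generated, $N_{\ab}$ is a finitely generated abelian group, so $\mathrm{Aut}(N_{\ab})$ is residually finite; the conjugation action factors through $Q\to\mathrm{Aut}(N_{\ab})$ because inner automorphisms act trivially on the abelianisation, and its image is simultaneously a quotient of $Q$, hence has no non-trivial finite quotients, and a subgroup of the residually finite group $\mathrm{Aut}(N_{\ab})$, hence is residually finite; a residually finite group without non-trivial finite quotients is trivial. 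I would record at the same time that $Q$ is perfect: being finitely generated (as a quotient of the finitely generated group $\G$) with no non-trivial finite quotients, its abelianisation $Q_{\ab}=H_1(Q,\Z)$ is a finitely generated abelian group with no non-trivial finite quotient, so $Q_{\ab}=0$. The five-term exact sequence $H_2(Q,\Z)\to (N_{\ab})_Q\to H_1(\G,\Z)\to H_1(Q,\Z)\to 0$ now collapses: using $H_2(Q,\Z)=0$, $H_1(Q,\Z)=0$ and $(N_{\ab})_Q=N_{\ab}$ (trivial action), it yields $u_*\colon H_1(N,\Z)\xrightarrow{\ \sim\ }H_1(\G,\Z)$.

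For the $H_2$ statement I would read off the line $p+q=2$ of the same spectral sequence. Trivial action and $Q_{\ab}=0$ give, via the universal coefficient theorem, $E^2_{1,1}=H_1(Q,N_{\ab})=0$, while $E^2_{2,0}=H_2(Q,\Z)=0$ by hypothesis; hence the only possibly non-zero graded piece of $H_2(\G,\Z)$ is $E^\infty_{0,2}$, which is a quotient of $E^2_{0,2}=H_2(N,\Z)_Q$. Thus $H_2(\G,\Z)$ is a quotient of $H_2(N,\Z)_Q$, and since the relevant edge homomorphism is induced by $u$, the map $u_*\colon H_2(N,\Z)\to H_2(\G,\Z)$ factors as $H_2(N,\Z)\twoheadrightarrow H_2(N,\Z)_Q\twoheadrightarrow H_2(\G,\Z)$ and is in particular surjective. (Surjectivity of each $u_c$ can also be seen directly, since the cokernel of $N/N_c\to\G/\G_c$ is $\G/(N\G_c)\cong Q/Q_c$, which vanishes because $Q$ is perfect.)

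The main obstacle is the one the trivial-action lemma is designed to overcome: a priori the five-term sequence only identifies $H_1(\G,\Z)$ with the $Q$-coinvariants of $N_{\ab}$, so if $Q$ acted non-trivially on $N_{\ab}$ then $u_2$, and hence every $u_c$, would fail to be injective and the whole scheme would collapse. It is precisely the finite generation of $N$, through the residual finiteness of $\mathrm{Aut}(N_{\ab})$, that forces the action to be trivial and rescues the argument.
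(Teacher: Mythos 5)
Your proposal is correct and follows essentially the same route as the paper: the trivial-action observation via residual finiteness of $\mathrm{Aut}(N_{\ab})$, the five-term exact sequence for $H_1$, the vanishing of $E^2_{1,1}$ and $E^2_{2,0}$ to get surjectivity on $H_2$, and then Stallings' theorem. Your extra remarks (the explicit justification that $Q$ is perfect, and the direct cokernel argument for surjectivity of $u_c$) are minor elaborations of steps the paper leaves implicit.
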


\begin{proof} From the lower left corner of the LHS spectral sequence one obtains the following 5-term exact sequence (see
\cite{CE} page 328):
$$
H_2(Q,\Z) \to H_0(Q, H_1(N,\Z))\to H_1(\G,\Z)\to H_1(Q,\Z)\to 0.
$$
We have assumed that $H_2(Q,\Z)= H_1(Q,\Z)=0$, so the second arrow $H_0(Q, H_1(N,\Z))\to H_1(\G,\Z)$ is an isomorphism.

By definition, $H_0(Q, H_1(N,\Z))$ is the quotient of $H_1(N,\Z)$ by the action of $Q$. But since $H:= H_1(N,\Z)$ is a finitely
generated abelian group, ${\rm{Aut}}(H)$ is residually finite, whereas $Q$ has no non-trivial finite quotients. Thus,
the action of $Q$ on $H$ is trivial and the composition $H_1(N,\Z)\to H_0(Q, H_1(N,\Z))\to H_1(\G,\Z)$ (which is
the map on $H_1(-,\Z)$ induced by $u:N\to \G$) is an isomorphism.
Moreover, as $Q$ is perfect and $H_1(N,\Z)$ is a trivial $Q$-module, $H_1(Q,H_1(N,\Z)) =0$.
And since $E^2_{2,0} = H_2(Q, H_0(N,\Z)) = H_2(Q, \Z)$ is also assumed to be zero, the spectral sequence has only one
term  $E^2_{p,q}$ with $p+q=2$ that might be non-zero, namely $E^2_{0,2}=H_0(Q, H_2(N,\Z))$. It follows that the 
composition $H_2(N,\Z)\to H_0(Q, H_2(N,\Z))\to E^\infty_{0,2}\to H_2(\G,\Z)$ (which is
the map on $H_2(-,\Z)$ induced by $u:N\to \G$) is an epimorphism.

Stallings \cite{stall} proved that if a homomorphism of groups $u:N\to\G$ induces an isomorphism on $H_1(-,\Z)$ and an epimorphism
on $H_2(-,\Z)$, then $u_c: N/N_c\to \G/\G_c$  is an isomorphism for all $c\ge 1$.
\end{proof}

We remind the reader that the {\em fibre product} $P<\G\times \G$
associated to an epimorphism of groups $p:\G\epi Q$ is the subgroup
$P=\{(x,y)\mid p(x)=p(y)\}$.

\begin{corollary}\label{c:fpOK} Under the hypotheses of Proposition \ref{p:GPnilp},
the inclusion $P\hookrightarrow\G\times\G$ of the fibre product induces an isomorphism $P/P_c\to \Gamma/\G_c\times\Gamma/\G_c$ for every $c\ge 1$, and
hence an isomorphism $\wh{P}_{\nil} \to \wh{\G}_{\nil}\times\wh{\G}_{\nil}$.
\end{corollary}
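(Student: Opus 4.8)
The plan is to realise both $P$ and $\G\times\G$ as groups containing the same finitely generated normal subgroup $N\times N$, where $N$ is the kernel of $p$, and to feed the resulting extensions into Proposition \ref{p:GPnilp}. Recall that the hypotheses concern the sequence $1\to N\to\G\overset{p}\to Q\to 1$. The map $q:P\to Q$ sending $(x,y)$ to $p(y)=p(x)$ is a surjection whose kernel is $\{(x,y)\in P : p(x)=p(y)=1\}=N\times N$, so there is a short exact sequence
$$
1\to N\times N \to P \overset{q}\to Q\to 1;
$$
here $N\times N$ is normal in $P$ because it is already normal in the ambient group $\G\times\G$.

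First I would apply Proposition \ref{p:GPnilp} to this sequence. Its hypotheses are satisfied: $N\times N$ is finitely generated since $N$ is, and the quotient is the very same group $Q$, which by assumption has no non-trivial finite quotients and satisfies $H_2(Q,\Z)=0$. Hence the inclusion $N\times N\hookrightarrow P$ induces isomorphisms $(N\times N)/(N\times N)_c\to P/P_c$ for every $c$. Separately, Proposition \ref{p:GPnilp} applied to the original sequence already asserts that the inclusion $N\hookrightarrow\G$ induces isomorphisms $N/N_c\to\G/\G_c$ for all $c$.

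The remaining ingredient is the elementary identity $(A\times B)_c=A_c\times B_c$ for the lower central series of a direct product, which gives natural isomorphisms $(A\times B)/(A\times B)_c\cong(A/A_c)\times(B/B_c)$. Applying this to $N\times N$ and to $\G\times\G$, the inclusion $N\times N\hookrightarrow\G\times\G$ induces on $c$-th quotients the product of the two maps $N/N_c\to\G/\G_c$, hence an isomorphism by the previous paragraph. Since the inclusion $N\times N\hookrightarrow\G\times\G$ factors as $N\times N\hookrightarrow P\hookrightarrow\G\times\G$, the functoriality of $H\mapsto H/H_c$ yields a commuting triangle in which two of the three maps on $c$-th quotients are isomorphisms; it follows that the third, namely the map $P/P_c\to(\G\times\G)/(\G\times\G)_c$ induced by $P\hookrightarrow\G\times\G$, is also an isomorphism. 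Identifying $(\G\times\G)/(\G\times\G)_c$ with $(\G/\G_c)\times(\G/\G_c)$ gives the stated isomorphism $P/P_c\to(\G/\G_c)\times(\G/\G_c)$, and passing to the inverse limit over $c$, which commutes with finite products, produces $\wh{P}_{\nil}\to\wh{\G}_{\nil}\times\wh{\G}_{\nil}$.

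The computation itself is short; the point needing the most care is the bookkeeping in the last paragraph, namely confirming that the isomorphism obtained is precisely the one induced by the inclusion $P\hookrightarrow\G\times\G$ rather than merely an abstract isomorphism. This is exactly why I route everything through the common subgroup $N\times N$ and appeal to functoriality. One could instead apply Proposition \ref{p:GPnilp} directly to $1\to N\times N\to\G\times\G\to Q\times Q\to 1$, but that forces one to verify separately that $Q\times Q$ has no non-trivial finite quotients and, via the K\"unneth formula, that $H_2(Q\times Q,\Z)=0$; the latter requires $H_1(Q,\Z)=0$ in addition to $H_2(Q,\Z)=0$, and the route through products of lower central series sidesteps this.
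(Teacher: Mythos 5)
Your proof is correct and follows the same skeleton as the paper's: both arguments route through the chain of inclusions $N\times N \overset{i}{\hookrightarrow} P \overset{j}{\hookrightarrow} \G\times\G$ and finish with a two-out-of-three argument on the induced maps of lower central quotients. The only divergence is in how the composite $j\circ i$ is handled: the paper applies Proposition \ref{p:GPnilp} a second time, to the extension $1\to N\times N\to\G\times\G\to Q\times Q\to 1$, invoking the K\"unneth formula to get $H_2(Q\times Q,\Z)=0$ (which, as you note, implicitly also uses $H_1(Q,\Z)=0$), whereas you instead use the elementary identity $(A\times B)_c=A_c\times B_c$ together with the isomorphisms $N/N_c\to\G/\G_c$ already furnished by the Proposition applied to the original sequence. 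Your variant is marginally more economical, sidestepping the K\"unneth computation; both routes are valid and yield the same conclusion, including the point you rightly emphasise that the resulting isomorphism is the one induced by the inclusion $P\hookrightarrow\G\times\G$.
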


\begin{proof} We have inclusions $N\times N\overset{i}\to P \overset{j}\to \G\times \G$.
The quotient of $P$ by $N\times N$ is $Q$ and the quotient of $\G\times\G$ by $N\times N$ is $Q\times Q$. The K\"unneth formula
tells us that $H_2(Q\times Q,\Z)=0$, and it is obvious that $Q\times Q$ has no finite quotients. Thus, by the proposition,
both $i$ and $j\circ i$ induce isomorphisms modulo any term of the lower central series, and therefore
$j$ does as well.
\end{proof} 
 
\subsection{Grothendieck Pairs and Pro-Nilpotent completions}

The criterion established in Proposition \ref{p:GPnilp} will be enough for the demands of Theorems A and B
but we should point out that (with different finiteness assumptions) one can prove a stronger result, namely that
$N\hookrightarrow\G$ and $P\hookrightarrow \G\times\G$ induce isomorphisms of profinite completions. 
The purpose of this subsection is to expand on this remark and explain why
 the statement about profinite completions is indeed stronger 
(cf.~Remark \ref{rr}).

Let $\G$ be a residually finite group and let $u:P\hookrightarrow \G$
be the inclusion of a subgroup $P$. Then $(\G,P)_u$ is called a {\em
  Grothendieck Pair} if the induced homomorphism $\widehat u:\widehat
P \to\widehat\G$ is an isomorphism but $u$ is not. (When no confusion is likely
to arise, it is usual to write $(\G,P)$ rather than $(\G,P)_u$.) Grothendieck
\cite{groth} asked about the existence of such pairs of finitely
presented groups and the first such pairs were constructed by Bridson
and Grunewald in \cite{BG}. The analogous problem for finitely
generated groups had been settled earlier by Platonov and Tavgen
\cite{PT}. Both constructions rely on versions
of the following result (cf.~\cite{PT}, \cite{BG} Theorem 5.1 and \cite{Bri}).

\begin{proposition}\label{l:pt}
Let $1\to N\to \G\to Q\to 1$ be a short exact sequence of groups with
$\G$ finitely generated and let $P$ be the associated fibre product.
 Suppose that $Q\neq 1$ is finitely presented, has
no proper subgroups of finite index, and $H_2(Q,\Z)=0$. Then
\begin{enumerate}
\item $(\G\times\G,P)$ is a Grothendieck pair;
\item if $N$ is finitely generated then $(\G,N)$ is a Grothendieck pair.
\end{enumerate}
\end{proposition}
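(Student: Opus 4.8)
The plan is to check directly the two defining conditions of a Grothendieck pair. That $u$ is not an isomorphism is immediate: since $Q\neq 1$ we have $N\neq\G$, and $P\neq\G\times\G$ (indeed $(\G\times\G)/P\cong Q$). The whole content is therefore to prove that $\wh{u}$ is an isomorphism, and for this I would use the following criterion. Writing $D$ for the ambient group ($\G$ in (2), $\G\times\G$ in (1)) and $u\colon P\hookrightarrow D$ for the inclusion, $\wh{u}$ is an isomorphism if and only if (a) $P$ is dense in $\wh{D}$, and (b) every finite-index subgroup of $P$ contains $P\cap K$ for some finite-index $K\vartriangleleft D$. Granted (a), condition (b) is equivalent to the assertion that every homomorphism from $P$ to a finite group extends over $D$: density gives $P/(P\cap K)\cong D/K$ for each finite-index $K\vartriangleleft D$, so a finite quotient of $P$ that kills $P\cap K$ factors through $D/K$.

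Density (condition (a)) is the easy half, and rests only on the hypothesis that $Q$ has no nontrivial finite quotient. In case (1), given an epimorphism $\phi\colon\G\times\G\epi F$ onto a finite group, the image $\phi(\G\times 1)$ modulo $\phi(N\times 1)$ is a finite quotient of $\G/N\cong Q$ and hence trivial, so $\phi(N\times 1)=\phi(\G\times 1)$; likewise $\phi(1\times N)=\phi(1\times\G)$. As these two images generate $F$, we get $\phi(N\times N)=F$, and since $N\times N\subseteq P$ this gives $\phi(P)=F$. In case (2) the same argument shows $\phi(N)=F$ for every epimorphism $\phi\colon\G\epi F$.

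The heart of the matter is condition (b), and I would extract it as a single lemma: \emph{if $N_1\vartriangleleft\G$ has finite index in $N$, then the quotient map $N\to A:=N/N_1$ extends to an epimorphism $s\colon\G\epi S$ onto a finite group whose kernel meets $N$ in exactly $N_1$.} To prove this I would analyse the extension $1\to A\to E\to Q\to 1$, where $E:=\G/N_1$. Conjugation makes $A$ a $\G$-group; the subgroup $N$ acts by inner automorphisms of $A$, while the image of $\G\to\operatorname{Aut}(A)$ modulo $\operatorname{Inn}(A)$ is a finite quotient of $Q$, hence trivial, so that image is exactly $\operatorname{Inn}(A)$. Consequently every element of $E$ induces an inner automorphism of $A$, giving $E=A\cdot C_E(A)$ with $A\cap C_E(A)=Z(A)$ and $C_E(A)/Z(A)\cong Q$. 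Since $C_E(A)$ centralises $A$, this displays $C_E(A)$ as a central extension $1\to Z(A)\to C_E(A)\to Q\to 1$, classified by an element of $H^2(Q,Z(A))$. This is where the hypothesis $H_2(Q,\Z)=0$ enters, and I expect it to be the main obstacle: by the universal coefficient theorem $H^2(Q,Z(A))$ is assembled from $\Hom(H_2(Q,\Z),Z(A))$ and $\operatorname{Ext}(H_1(Q,\Z),Z(A))$, and both vanish because $H_2(Q,\Z)=0$ and the abelianisation $H_1(Q,\Z)$ is trivial (a finitely generated abelian group with no nontrivial finite quotient is $0$). Hence the central extension splits as $C_E(A)=Z(A)\times\tilde Q$ with $\tilde Q\cong Q$; one then checks $\tilde Q\vartriangleleft E$ and $E/\tilde Q\cong A$, and pulling back along $\G\to E$ yields the required $s$.

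With the lemma established, (2) follows at once: given a finite-index subgroup $N_1\vartriangleleft N$, replace it by its $\G$-core $\bigcap_{g\in\G}gN_1g^{-1}$, which is again of finite index because $N$ is finitely generated, and apply the lemma to obtain $K=\ker s\vartriangleleft\G$ with $K\cap N\subseteq N_1$. For (1), given $\phi\colon P\epi F$, set $\psi:=\phi|_{1\times N}$ and $\bar\phi:=\phi|_{\Delta_\G}\colon\G\to F$; conjugation by the diagonal elements $(g,g)\in\Delta_\G\subseteq P$ realises the $\G$-action on $N$, so $\ker\psi$ is normal in $\G$ and of finite index in $N$. Applying the lemma to $\ker\psi$ and then combining the resulting map with $\bar\phi$ produces $s\colon\G\epi S$ with $\ker s\subseteq\ker\bar\phi$ and $(\ker s)\cap N\subseteq\ker\psi$. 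Taking $K=\ker(s\times s)\vartriangleleft\G\times\G$ and using the identity $\phi(x,y)=\bar\phi(x)\,\psi(x^{-1}y)$ — valid because $(x,y)=(x,x)(1,x^{-1}y)$ with both factors in $P$ — one checks $K\cap P\subseteq\ker\phi$, which is (b). Finally, finite presentation of $Q$ (together with $\G$ finitely generated) guarantees that the fibre product $P$ is finitely generated, so that the pairs produced lie in the intended finiteness class.
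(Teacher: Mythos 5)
Your proof is correct, and it is essentially the argument of the sources the paper itself cites for this statement (Platonov--Tavgen and Bridson--Grunewald, Theorem 5.2): the paper gives no proof of Proposition \ref{l:pt}, and your key lemma --- splitting the central extension $1\to Z(A)\to C_E(A)\to Q\to 1$ using $H_1(Q,\Z)=H_2(Q,\Z)=0$ to extend any finite quotient of $N$ over $\G$ --- together with the density argument coming from $\wh{Q}=1$, is exactly the mechanism used there. The only cosmetic caveat is that in part (1) one should note, as you implicitly do, that $\ker\psi$ is automatically normal in $\G$ (via conjugation by the diagonal), so no finite generation of $N$ is needed there, whereas in part (2) the passage to the $\G$-core is where finite generation of $N$ is genuinely used.
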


Grothendieck pairs give rise to pro-nilpotent equivalences:

\begin{proposition}\label{groth-is-enough}
Let $u:P\hookrightarrow\G$ be a pair of finitely generated, residually
finite groups,
and for each $c\ge 1$, let $u_c: P/P_c\to \G/\G_c$ be the induced homomorphism.
If $(\G,P)_u$ is a Grothendieck pair, 
then $u_c$ is an isomorphism for all $c\ge 1$, and hence $\wh{u}_{\nil} : \wh{P}_{\nil} \to \wh{\G}_{\nil}$ is 
an isomorphism.
In particular, if $P$ and $\G$ are residually nilpotent, then they
have the same nilpotent genus.
\end{proposition}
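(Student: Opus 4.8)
The plan is to deduce the statement about the lower central series quotients from the hypothesis $\wh{u}:\wh{P}\to\wh{\G}$ being an isomorphism, using the correspondence between nilpotent quotients of a discrete group and finite-group-quotients of its profinite completion. The key point is that $P/P_c$ and $\G/\G_c$ are determined by the sets of homomorphisms from $P$ and $\G$ into finite nilpotent groups, and these in turn are controlled by $\wh{P}$ and $\wh{\G}$ via Lemma \ref{NS}.

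First I would fix $c\ge 1$ and consider the canonical epimorphisms $\pi_P:P\epi P/P_c$ and $\pi_\G:\G\epi\G/\G_c$. Since $P/P_c$ and $\G/\G_c$ are finitely generated nilpotent groups, each is residually finite, so each embeds in its profinite completion. By Lemma \ref{NS}, for any finite group $Q$ there is a bijection $\Epi(\wh{\G},Q)\to\Epi(\G,Q)$ (and similarly for $P$), obtained by precomposing with the canonical map to the profinite completion. The isomorphism $\wh{u}$ therefore induces, for every finite $Q$, a bijection between $\Epi(\G,Q)$ and $\Epi(P,Q)$ that is compatible with $u:P\hookrightarrow\G$ in the sense that an epimorphism $\G\epi Q$ corresponds to its restriction $\G\epi Q$ composed with $u$. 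The heart of the argument is to upgrade this statement about finite quotients to a statement about the (possibly infinite) nilpotent quotients $P/P_c$ and $\G/\G_c$.

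The cleanest route is to show $u_c$ is surjective and then injective. For surjectivity, observe that $u(P)$ together with $\G_c$ generates $\G$ modulo $\G_c$ precisely when $P$ surjects onto $\G/\G_c$ through every finite nilpotent quotient; concretely, if $u_c$ were not onto then the cokernel would be a non-trivial finitely generated nilpotent group, hence would have a non-trivial finite nilpotent quotient $Q$, giving an epimorphism $\G/\G_c\epi Q$ that does \emph{not} factor through $u(P)$. This would contradict the bijection $\Epi(\wh{\G},Q)\cong\Epi(\G,Q)\cong\Epi(P,Q)$ supplied by $\wh{u}$ and Lemma \ref{NS}, since the two sides force every finite nilpotent quotient of $\G$ to restrict to an epimorphism on $P$. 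For injectivity, I would argue that any non-trivial element of $\ker u_c\subset P/P_c$ would survive in some finite nilpotent quotient of $P/P_c$ (as this group is residually finite-nilpotent), yielding an epimorphism $P\epi Q$ onto a finite nilpotent $Q$ that does not extend to $\G$; but $\wh{u}$ being an isomorphism forces $\Epi(P,Q)\cong\Epi(\G,Q)$ compatibly with $u$, so every such homomorphism of $P$ extends over $\G$, and functoriality of the lower central series then shows the element must already vanish in $P/P_c$.

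Once $u_c$ is an isomorphism for every $c$, passing to the inverse limit gives the isomorphism $\wh{u}_{\nil}:\wh{P}_{\nil}\to\wh{\G}_{\nil}$ directly from the definition of the pro-nilpotent completion as $\ilim P/P_c$. The final sentence then follows: if $P$ and $\G$ are residually nilpotent, then having isomorphic lower central series quotients $P/P_c\cong\G/\G_c$ for all $c$ is exactly the definition of having the same nilpotent genus. The main obstacle I anticipate is the injectivity step: extracting from the abstract isomorphism $\wh{u}$ a genuine statement that elements killed by $u_c$ were already trivial, rather than merely that the finite nilpotent quotients match up. The right way to handle this is to work with the pro-(finite nilpotent) completion $\wh{(-)}_{\fn}$, which \emph{is} a compact topological group, note that $\wh{u}$ an isomorphism forces $\wh{u}_{\fn}$ to be an isomorphism, and then invoke Proposition \ref{correspondence} to transfer the bijection of subgroups faithfully between $P$ and $\G$; this converts the delicate injectivity claim into the already-established correspondence between open subgroups and their closures.
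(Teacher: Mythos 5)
Your proposal is correct and follows essentially the same route as the paper: Lemma \ref{NS} together with the isomorphism $\wh{u}$ gives a bijection $\Epi(\G,Q)\to\Epi(P,Q)$ for every finite $Q$, which for nilpotent $Q$ of class $\le c$ factors through a bijection $\Epi(\G/\G_c,Q)\to\Epi(P/P_c,Q)$; injectivity of $u_c$ then comes from residual finiteness of finitely generated nilpotent groups, and surjectivity from the fact that a proper subgroup of a finitely generated nilpotent group is avoided by some finite quotient. The paper handles that last point via Malcev's subgroup separability, whereas you speak of a ``cokernel''; since the image of $u_c$ need not be normal this is not literally a quotient group, but it lies in a maximal (hence normal, prime-index) subgroup of $\G/\G_c$, so your version is fine.
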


Before presenting the proof of this proposition we make a remark and establish a lemma.

\begin{remark}\label{rr} In this proposition, it is vital that the isomorphism between $\wh{P}$ and $\wh{\G}$ is induced
by a map $P\to\G$ of discrete groups. For example, as we remarked earlier, there exist finitely generated nilpotent
groups that are not isomorphic but have the same profinite completion. A nilpotent group is  
its own pro-nilpotent completion, so these examples have the same profinite genus but different nilpotent genera.
\end{remark}
 
\begin{lemma}
\label{groth-bijection}
Let  $u:P\hookrightarrow\G$ be a pair of finitely generated, residually
finite groups. If  $(\G,P)_u$ is a Grothendieck pair, then for every finite group $G$, the
map  $q\mapsto q\circ u$
defines a bijection $\Epi(\G,G)\to \Epi(P,G)$.\end{lemma}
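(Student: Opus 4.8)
The plan is to deduce the bijection $\Epi(\G,G)\to\Epi(P,G)$ directly from the hypothesis that $\widehat u:\widehat P\to\widehat\G$ is an isomorphism, using the universal property of profinite completions encapsulated in Lemma \ref{NS}. The point is that epimorphisms to a fixed finite group are detected at the level of profinite completions, and an isomorphism of profinite completions therefore matches them up on the nose.

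First I would recall that, by Lemma \ref{NS} applied to the finitely generated group $\G$, restriction along $\iota_\G:\G\to\widehat\G$ gives a bijection $\Epi(\widehat\G,G)\to\Epi(\G,G)$; likewise restriction along $\iota_P:P\to\widehat P$ gives a bijection $\Epi(\widehat P,G)\to\Epi(P,G)$. Next I would observe that composition with the isomorphism $\widehat u$ furnishes a bijection $\Epi(\widehat\G,G)\to\Epi(\widehat P,G)$, $g\mapsto g\circ\widehat u$, simply because $\widehat u$ is an isomorphism of profinite groups and hence pre-composition by it is a bijection on (continuous) epimorphisms to any fixed group $G$. Stringing these three bijections together yields a bijection $\Epi(\G,G)\to\Epi(P,G)$.

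The only thing to check is that the composite bijection is the map $q\mapsto q\circ u$ claimed in the statement, and this is a matter of chasing the naturality square $\widehat u\circ\iota_P=\iota_\G\circ u$, which holds by functoriality of profinite completion. Concretely, starting from $q\in\Epi(\G,G)$, its preimage in $\Epi(\widehat\G,G)$ is the unique continuous extension $\bar q$ with $\bar q\circ\iota_\G=q$; transporting along $\widehat u$ gives $\bar q\circ\widehat u\in\Epi(\widehat P,G)$; and restricting to $P$ gives $(\bar q\circ\widehat u)\circ\iota_P=\bar q\circ(\widehat u\circ\iota_P)=\bar q\circ\iota_\G\circ u=q\circ u$, exactly as required. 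So the composite is indeed $q\mapsto q\circ u$.

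There is essentially no hard step here; the result is a formal consequence of Lemma \ref{NS} together with $\widehat u$ being an isomorphism. The one subtlety I would take care to address is that Lemma \ref{NS} (via the Nikolov--Segal theorem) guarantees that \emph{every} homomorphism from $\widehat\G$ or $\widehat P$ to a finite group is automatically continuous, so I need not carry the adjective ``continuous'' through the argument or worry about discontinuous epimorphisms to $G$ slipping through the identifications. With that remark in place, the three bijections compose cleanly and the naturality chase completes the proof.
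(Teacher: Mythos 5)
Your proposal is correct and follows essentially the same route as the paper: both combine the two restriction bijections $\Epi(\wh{\G},G)\to\Epi(\G,G)$ and $\Epi(\wh{P},G)\to\Epi(P,G)$ from Lemma \ref{NS} with the bijection induced by the isomorphism $\wh{u}$, and then verify via the naturality square $\wh{u}\circ\iota_P=\iota_\G\circ u$ that the composite is $q\mapsto q\circ u$. Your explicit remark about continuity being automatic (via Nikolov--Segal) is a fine point the paper leaves implicit.
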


\begin{proof}  
As in Lemma \ref{NS}, by restricting homomorphisms $\wh{\G}\to G$ to $\G<\wh{\G}$ we obtain
a bijection  $\Epi(\wh{\G},G)\to \Epi(\G,G)$. Similarly, there is a bijection
$\Epi(\wh{P},G)\to \Epi(P,G)$. And the isomorphism $\wh{u}$ induces a bijection $\Epi(\wh{\G},G)\to \Epi(\wh{P},G)$.
Given $q\in \Epi(\G,G)$, the map $q\mapsto q\circ u$ described in Lemma \ref{NS} completes the commutative square
\[\xymatrix{
\Epi(\G,G)\ar@{>}[r]&\Epi(P,G)\\
\Epi(\wh{\G},G)\ar@{>}[r]\ar[u]&\Epi(\wh{P},G)\ar[u]
}\] and hence is a bijection.
\end{proof}

\noindent {\em Proof of Proposition \ref{groth-is-enough}:} If $G$ is nilpotent of class $c$, and $H$ is any group, then  
every homomorphism from $H$ to $G$ factors uniquely through $H/H_c$ and hence there is a natural
bijection  $\Epi(H/H_c, G)\to \Epi(H, G)$.  
By combining two such bijections with the epimorphism $q\mapsto q\circ u$ of Lemma \ref{groth-bijection},
$$
\Epi(\G/\G_c,G)\to \Epi(\G,G)\to \Epi(P,G)\to \Epi(P/P_c,G),
$$
we see that if $G$ is finite, then $q\mapsto q\circ u_c$ defines a bijection $\Epi(\G/\G_c,G)\to \Epi(P/P_c,G)$. 

Finitely generated nilpotent groups are residually
finite, so for every $c>0$ and every non-trivial element $\g\in P/P_c$, there is an epimorphism $\pi:P/P_c\to G$
to a finite  (nilpotent) group such that $\pi(\g)\neq 1$. The preceding argument provides $q\in \Epi(\G/\G_c,G)$
such that $\pi = q\circ u_c$, whence $u_c(\gamma)\neq 1$. Thus $u_c$ is injective.  

$u_c$ is also surjective, for if
$\g\in\G/\G_c$ were not in the image then using the subgroup
separability of nilpotent groups \cite{Mal}, we would have an
epimorphism $q:\G/\G_c\to G$ to some finite group such that $q(\gamma)\notin q\circ u_c
(P/P_c)$, contradicting the fact that $q\circ u_c$ is an epimorphism. \qed

\section{Conjugacy Problem: A Solution to Problem \ref{baumslag2}}

Recall that a finitely generated group $G$ is said to have a {\em solvable conjugacy problem} if there is an
algorithm that, given any pair of words in the generators, can correctly determine whether or
not these words define conjugate elements of the group. If no such algorithm exists then one says that the group
has an unsolvable conjugacy problem.  

In the light of Proposition \ref{groth-is-enough}, the following
theorem settles Problem \ref{baumslag2} (cf.~Remark \ref{r:Chuck1}).
 This theorem will be proved by combining the techniques of \cite{karl} with recent advances in 
the understanding of non-positively curved cube complexes and right-angled Artin groups (RAAGs). We remind the reader that a RAAG
is a group with a finite presentation of the form
$$
A = \langle a_1,\dots, a_n \mid [a_i,a_j]=1 \forall (i,j)\in E\rangle.
$$
Much is known about such groups. For our purposes here, their most important feature is that they are residually torsion-free-nilpotent  (\cite{DK}
Theorem 2.1).

\begin{theorem}
\label{solution1}
There exist pairs of finitely presented, residually
torsion-free-nilpotent groups $u:P\hookrightarrow\G$ such that $u$ induces isomorphisms of 
pro-nilpotent and profinite completions,  
but $\G$ has a solvable conjugacy problem while $P$ has an unsolvable
conjugacy problem.
\end{theorem}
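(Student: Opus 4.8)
The plan is to build the pair $u:P\hookrightarrow\G$ as a fibre product arising from a carefully chosen short exact sequence $1\to N\to \G\to Q\to 1$, and to use the machinery already assembled in this paper to guarantee the pro-nilpotent and profinite equivalences, while importing a group-theoretic construction that forces the conjugacy problem to diverge. Concretely, I would take $\G$ to be a right-angled Artin group (RAAG), so that $\G$ is residually torsion-free-nilpotent by \cite{DK} and has a solvable conjugacy problem (RAAGs act properly cocompactly on CAT(0) cube complexes, hence are biautomatic with solvable conjugacy problem). The group $Q=\G/N$ must be finitely presented, perfect (no non-trivial finite quotients), and satisfy $H_2(Q,\Z)=0$; these are exactly the hypotheses of Proposition \ref{l:pt} and Proposition \ref{p:GPnilp}. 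Given such data, I set $P$ to be the fibre product of $\G\epi Q$ inside $\G\times\G$. Then Proposition \ref{l:pt}(1) shows $(\G\times\G,P)$ is a Grothendieck pair, Proposition \ref{groth-is-enough} (or directly Corollary \ref{c:fpOK}) gives that $u$ induces an isomorphism of pro-nilpotent completions, and Remark~item~(8)/(9) together with the Grothendieck property yields the isomorphism of profinite completions.

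The residual torsion-free-nilpotence of $P$ is immediate once $\G$ has this property, since $P<\G\times\G$ and $\G\times\G$ is residually torsion-free-nilpotent (a product of such groups is again such, and subgroups inherit the property). Finite presentability of $P$ follows if $N$ is finitely presented, by the standard \emph{$1$-$2$-$3$ Theorem}: when $Q$ is finitely presented, $N$ is finitely generated, and the relevant homological finiteness ($Q$ of type $F_3$, or at least $H_2(Q,\Z)=0$ with the appropriate presentation) holds, the fibre product $P$ is finitely presented. So both $P$ and $\G\times\G$ are finitely presented, residually torsion-free-nilpotent, and of the same nilpotent genus with isomorphic profinite completions.

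The substance of the theorem lies in engineering the conjugacy problem to be unsolvable in $P$ while keeping $\G$ (hence $\G\times\G$) with solvable conjugacy problem. Here I would follow the strategy of \cite{karl}: choose the data so that $P$ contains, in a controlled way, an encoded copy of an unsolvable problem — typically one arranges that deciding conjugacy of certain systematically-produced pairs of elements of $P$ is equivalent to solving the word problem (or membership problem) in $Q$, and then takes $Q$ to be a finitely presented perfect superperfect group with \emph{unsolvable word problem}. The fibre product construction is precisely the device that transports the undecidability of $Q$ into the conjugacy problem of $P$, because conjugacy of diagonal-type elements in $P$ reads off coincidences that amount to solving equations in $Q$. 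Meanwhile the ambient RAAG $\G\times\G$ keeps a solvable conjugacy problem by its non-positive curvature. The hard part will be simultaneously meeting all the constraints on $Q$: it must be finitely presented, perfect with $H_2(Q,\Z)=0$ (superperfect), \emph{and} have unsolvable word problem, while still being a quotient of a RAAG with finitely presented kernel $N$. Producing such a $Q$ as a quotient of a RAAG is exactly where the recent cubical/RAAG technology enters: one uses the fact that RAAGs are rich enough to surject onto a wide class of finitely presented groups, and combines this with known constructions of superperfect finitely presented groups with unsolvable word problem, arranging the kernel to be finitely presented so that the $1$-$2$-$3$ Theorem applies to $P$.
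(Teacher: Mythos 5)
Your overall strategy --- form the fibre product $P<\G\times\G$ of an epimorphism $\G\epi Q$ where $Q$ is finitely presented, superperfect, has no finite quotients and has unsolvable word problem, then invoke Propositions \ref{l:pt} and \ref{p:GPnilp} for the profinite and pro-nilpotent equivalences and the 1-2-3 Theorem of \cite{bbms} for finite presentability of $P$ --- is exactly the paper's. But your instantiation has two genuine gaps. First, you take $\G$ itself to be a RAAG and assert that ``RAAGs are rich enough to surject onto a wide class of finitely presented groups'' with controlled kernel; no such fact is available, and this is precisely the step the paper handles differently: it applies the Haglund--Wise version \cite{HW} of the Rips construction to $Q$ to get $1\to K\to H\to Q\to 1$ with $H$ torsion-free \emph{hyperbolic} and virtually special, passes to a finite-index subgroup $H_0\le H$ that embeds in a RAAG (hence is residually torsion-free-nilpotent by \cite{DK}), notes that $H_0$ still surjects $Q$ because $Q$ has no proper finite-index subgroups, and that $N=K\cap H_0$ is still finitely generated. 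The ambient group is then $H_0\times H_0$, a product of hyperbolic groups, not a RAAG. Second, and more seriously, the mechanism that converts undecidability of the word problem in $Q$ into undecidability of the conjugacy problem in $P$ is a theorem of \cite{bbms} whose hypothesis is that the factor be torsion-free \emph{hyperbolic}; its proof exploits the structure of centralizers in hyperbolic groups. A non-free RAAG is never hyperbolic, so your appeal to ``conjugacy of diagonal-type elements reads off equations in $Q$'' is not supported by any cited result in the setting you set up.

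A further, smaller error: you state that finite presentability of $P$ requires $N$ to be finitely presented. The 1-2-3 Theorem requires only that $N$ be finitely generated, that the middle group be finitely presented, and that $Q$ be of type $F_3$ (not merely $H_2(Q,\Z)=0$); the groups $Q$ of \cite{karl} are chosen to be of type $F_3$ for exactly this reason. Insisting that $N$ be finitely presented would in fact be fatal in the paper's construction: $H$ has cohomological dimension $2$, so by Bieri's theorem \cite{Bi} a finitely presented normal subgroup of infinite index in $H$ would be free, which the Rips kernel is not.
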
                         

\begin{proof} Let $1\to N\to H\stackrel{p}{\longrightarrow} Q\to 1$ be a
short exact sequence of groups and let $P< H\times H$ be the associated fibre
product. The  1-2-3 Theorem  \cite{bbms} states that if $N$ is finitely
generated, $H$ is finitely presented, and $Q$ has a classifying space
with a finite 3-skeleton, then the  $P$ is finitely presented
\cite{bbms}. 

A further result from \cite{bbms} states that if
$H$ is torsion-free and hyperbolic,
and $Q$ has an unsolvable word problem, then  $P<H\times H$ will have an unsolvable conjugacy problem. On the
other hand, the conjugacy problem in $H\times H$ is always solvable in polynomial time (cf. \cite{karl} \S 1.5).

Using ideas from \cite{collins-miller}, it is proved in \cite{karl}
that there exist groups $Q$ with no finite quotients that are of type
$F_3$, have $H_2(Q,\Z)=0$ and are such that the word problem in $Q$ is
unsolvable. 

Combining the conclusion of the preceding three paragraphs 
with Proposition \ref{l:pt}(1) (or Proposition \ref{p:GPnilp} in the pro-nilpotent case), we see that the theorem would be
proved if, given a finitely presented group $Q$ with the properties described above, we could construct a
short exact sequence $1\to N\to H\to Q\to 1$ with $N$ finitely
generated and with $H$ hyperbolic and residually
torsion-free-nilpotent: for then, $P\hookrightarrow H\times H$ would be the required pair
of groups.
 
In \cite{rips} Rips describes an algorithm that, given a finite presentation of any group
$Q$ will construct a short exact sequence $1\to K \to
H\stackrel{p}{\longrightarrow} Q\to 1$ where $K$ is  finitely generated and
$H$ is hyperbolic.
There are many refinements of Rips's construction
in the literature. Haglund and Wise \cite{HW} proved a version in which  
$H$ is constructed to be {\em virtually special}. By definition,
a virtually special group $H$ has a subgroup of finite index $H_0<H$ that is a
subgroup of a RAAG \cite{HW}, and as remarked upon above,
RAAGs are residually torsion-free-nilpotent by \cite{DK}. 
Consider the short exact sequence $1\to K\cap H_0\to H_0\to p(H_0)\to 1$. 
If we take $Q$ to be as in third paragraph of the proof, then 
$p(H_0)=Q$, because $Q$ has no proper subgroups of finite index.
 And $N=K\cap H_0$, being of finite index in $K$, is
finitely generated. Thus we have constructed a short exact sequence
$1\to N\to H_0\to Q\to 1$ of the required form.
\end{proof}

\begin{remarks} 
\begin{enumerate}
\item In the preceding proof we quoted Haglund and Wise's version of the Rips construction; this
was used to ensure that the group $H$ was virtually special and hence virtually residually torsion-free-nilpotent.
Less directly, it follows from recent work of Agol \cite{agol} and Wise \cite{wise} that the groups produced by the original Rips
 construction \cite{rips} are also virtually special.

\item If one is willing to reduce the finiteness properties in the
  above theorem then one can simplify the proof: it is easy to prove
  that if $Q$ has an unsolvable word problem and $p:H\to Q$ is an
  epimorphism from a hyperbolic group with ${\rm{ker}}\, p$ finitely
  generated, then the conjugacy problem in ${\rm{ker}}\, p$ is
  unsolvable. So if $1\to N\to H\to Q\to 1$ is constructed as in the
  above proof, then $(H,N)$ is a Grothendieck pair of groups in the
  same nilpotent genus, but $H$ is hyperbolic and $N$ has an
  unsolvable conjugacy problem.
\end{enumerate}
\end{remarks}

\begin{remark}\label{r:Chuck1} Commenting on an earlier version of this manuscript,  Chuck Miller pointed out
to us that his work with Baumslag on the isomorphism problem
for residually torsion-free nilpotent groups \cite{BaM} contains an implicit
solution to Problem 1.1.  To prove the
main theorem in that paper, the authors construct a sequence of finite presentations for residually torsion-free nilpotent
groups $G_w$ indexed by words in the generators of an auxillary group $H$ that has an unsolvable word problem.
$G_w$ is isomorphic to $G_1$ if and only if $w=1$ in $H$. If $w\neq 1$ in $H$ then 
$G_w$  has unsolvable conjugacy problem, whereas $G_1$ has a solvable conjugacy problem.  
It follows that some $G_w$ with $w\neq 1$ must have the same nilpotent genus as $G_1$, for if not then one could
decide which $G_w$ were isomorphic to $G_1$ be running the following algorithm.

 Given a finite presentation of $\G_w$, use the solution to the
isomorphism problem for nilpotent groups to test if the quotients of $G_w$ by the terms
of its lower central series are isomorphic to the corresponding quotients of $G_1$. (Explicit presentations for these quotients
are obtained by simply adding basic commutators to the given presentations of $G_w$ and $G_1$.)
 If the nilpotent genus of $G_w$ is different from
that of $G_1$, this process will eventually find non-isomorphic quotients. At the same time, search naively
for an isomorphism from $G_w$ to $G_1$. These parallel processes will together
 determine whether or not $G_w$ is isomorphic to
$G_1$ {\em unless}  $G_w$ and $G_1$ are non-isomorphic groups of the same nilpotent genus.

Notice that although this argument settles Problem 1.2, it does not prove Theorem A(2): it does not construct
an explicit pair of groups $G_1$ and $G_{w\neq 1}$ with the same nilpotent quotients, it
just proves that  such pairs exist. Moreover, there is no homomorphism between the discrete groups inducing the
isomorphism of pro-nilpotent completions.
\end{remark}

\section{Finite Presentation: A Solution to Problem \ref{baumslag1}}\label{s:b1}

We present two constructions settling Problem \ref{baumslag1}. Both rely on the existence of finitely presented infinite groups $Q$ with
$H_2(Q,\Z)=0$ that have no non-trivial finite quotients. A general method for constructing such groups
is described in \cite{BG}. The first such group was discovered by Graham Higman:
$$
Q = \langle a,b,c,d \mid bab^{-1}=a^2,\ cbc^{-1}=b^2,\ dcd^{-1}=c^2,\ ada^{-1}=d^2\rangle.
$$

\subsection{Subdirect products of free groups}

Finitely generated free groups are residually torsion-free
nilpotent \cite{Mag0}, and hence so are subgroups of their direct
products. Thus the following proposition resolves Problem \ref{baumslag1}. 
 
\begin{proposition}
\label{Plat-Tav}
Let $Q$ be a finitely presented infinite group with $H_2(Q,\Z)=0$
that has no finite quotients. Let $F$ be a finitely generated free
group, let $F\to Q$ be a surjection, and let $P< F\times F$ be
the associated fibre product. Then:
\begin{enumerate}
\item $P$ is finitely generated but not finitely presented;
\item $P\hookrightarrow F\times F$ induces an isomorphism of pro-nilpotent completions. 
\end{enumerate}
\end{proposition}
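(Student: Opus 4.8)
The plan is to dispatch (2) with the machinery already assembled, reserving the real content for the non-finite-presentation half of (1). For (2), I would first check that $(F\times F,P)$ is a Grothendieck pair via Proposition~\ref{l:pt}(1): its hypotheses hold, since $F$ is finitely generated and $Q$ is finitely presented, nontrivial, satisfies $H_2(Q,\Z)=0$, and has no proper finite-index subgroups (any such subgroup would contain a finite-index normal core, yielding a nontrivial finite quotient of $Q$, of which there are none). Thus $\wh{P}\to\wh{F\times F}$ is an isomorphism. Since free groups, their direct products, and subgroups thereof are residually finite, I can apply Proposition~\ref{groth-is-enough} to this Grothendieck pair and conclude directly that $\wh{u}_{\nil}\colon\wh{P}_{\nil}\to\wh{F\times F}_{\nil}$ is an isomorphism, which is exactly (2). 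I would note in passing that Corollary~\ref{c:fpOK} is unavailable here, since the hypotheses of Proposition~\ref{p:GPnilp} demand that $N=\ker(F\to Q)$ be finitely generated, and this fails precisely because $Q$ is infinite.

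For the finite generation in (1), I would use the standard description of the fibre product. Writing $p\colon F\to Q$ and $\Delta=\{(w,w):w\in F\}\cong F$, each $(x,y)\in P$ equals $(xy^{-1},1)(y,y)$ with $xy^{-1}\in N$, so $P=(N\times 1)\,\Delta$. Because $Q$ is finitely presented, $N$ is the normal closure in $F$ of finitely many relators $r_1,\dots,r_k$; hence $N\times 1$ is the normal closure in $P$ of $(r_1,1),\dots,(r_k,1)$, and $P$ is generated by these together with a finite generating set for $\Delta$.

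The non-finite-presentation of $P$ is the crux. First I would record that $N$ is infinitely generated: it is a nontrivial normal subgroup ($N\neq 1$ since $F\not\cong Q$, as $F$ has nontrivial finite quotients while $Q$ has none) of infinite index (as $Q=F/N$ is infinite) in a finitely generated free group, and a nontrivial finitely generated normal subgroup of a free group is forced to have finite index. I would then invoke the theorem of Grunewald that the fibre product attached to a short exact sequence $1\to N\to F\to Q\to 1$ with $F$ finitely generated, $Q$ finitely presented, and $N$ infinitely generated is not finitely presentable (cf.~\cite{PT,BG}). The main obstacle is that this last input lies outside what the excerpt has set up; a self-contained alternative would be homological, showing via the Lyndon--Hochschild--Serre spectral sequence of $1\to N\times N\to P\to Q\to 1$ that $H_2(P,\Z)$ is not finitely generated, so that $P$ fails to be of type~$FP_2$ and a fortiori is not finitely presented. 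That computation is appreciably more delicate and is close in spirit to the $H_2$ analysis carried out later in the paper, so citing Grunewald's result is the efficient route.
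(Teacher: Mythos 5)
Your proof is correct, and for part (2) it takes a genuinely different route from the paper. For part (1) you do exactly what the paper does: the same explicit generating set $\{(x_i,x_i),(r_j,1)\}$ for $P$, and the same appeal to Grunewald \cite{fritz}, which the paper quotes in the form ``$P$ is finitely presentable if and only if $Q$ is finite''; your spectral-sequence fallback is therefore unnecessary, though your preliminary check that $N$ is infinitely generated (nontrivial normal, infinite index in a free group) is a correct and harmless addition. For part (2) the paper simply declares the assertion to be a special case of Corollary \ref{c:fpOK}, whereas you go through Proposition \ref{l:pt}(1) to obtain the Grothendieck pair $(F\times F,P)$ and then apply Proposition \ref{groth-is-enough}. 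Your parenthetical observation that Corollary \ref{c:fpOK} is not literally applicable is well taken: its hypotheses, inherited from Proposition \ref{p:GPnilp}, require $N=\ker(F\to Q)$ to be finitely generated, and here $N$ is an infinitely generated free group. Indeed the conclusion of Proposition \ref{p:GPnilp} genuinely fails for $N\hookrightarrow F$ in this setting (already $u_2\colon H_1(N,\Z)\to H_1(F,\Z)$ cannot be an isomorphism, since the source has infinite rank), so the proof of Corollary \ref{c:fpOK}, which factors through $N\times N\hookrightarrow F\times F$, does not go through as written; the paper's citation papers over this. Your detour --- using that part (1) of Proposition \ref{l:pt} does not require $N$ finitely generated, and then converting the profinite isomorphism into a pro-nilpotent one via Proposition \ref{groth-is-enough} --- is a clean and fully justified way to reach the same conclusion; the trade-off is that it leans on the external input of \cite{PT} and \cite{BG} rather than on the purely homological Stallings-type argument of Section 3.
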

 
 \begin{proof} Let $F$ be the free group on $\{x_1,\dots,x_n\}$. Since $Q$ is finitely
presented, the kernel of $F\to Q$ is the normal closure of a finite set $\{r_1,\dots,r_m\}$.
It is easy to check that $P < F\times F$ is generated by $\{(x_1,x_1),\dots,(x_n,x_n),
(r_1,1),\dots,(r_m,1)\}$. But Grunewald \cite{fritz} proves that $P$ is  finitely
presentable if and only if $Q$ is finite. Assertion   (2) is a special case of
 Corollary \ref{c:fpOK}.
 \end{proof}
 
 \begin{remark} Platonov and Tavgen \cite{PT} proved that $P\hookrightarrow F\times F$ also induces an isomorphism of profinite completions. 
 \end{remark}

\subsection{A solution via the Rips construction}

Again, let $Q$ be a finitely presented infinite group with $H_2(Q,\Z)=0$ that has no non-trivial finite quotients.
In the proof of Theorem \ref{solution1} we used a version of the Rips construction
to obtain  a short exact sequence $1\to N \to H\to Q\to 1$ with $H$
  hyperbolic (in particular finitely presented) and residually torsion-free
  nilpotent, and with $N$ finitely generated and not free. A further
  property of the Rips construction is that $H$ is small cancellation, in particular of cohomological dimension $2$.
  Bieri \cite{Bi} proved that a finitely presented normal
  subgroup of infinite index in a group of cohomological dimension $2$
  must be free.  Thus if $Q$ is infinite then $N$ is not finitely
  presented. But Proposition \ref{p:GPnilp} tells us that $N$ and $\G$ have the same nilpotent genus (and
  Proposition \ref{l:pt} tells us that $(N,\G)$ is a Grothendieck pair). 

\section{Nilpotent Genus and the Schur Multiplier: 
Problem \ref{baumslag3}}

In the previous section we settled the question of whether there exist finitely generated residually
torsion-free-nilpotent groups of the same genus such that one is
finitely presented and the other was not. In \cite{chat} Baumslag laid particular emphasis on 
a homological variant of this question: is there is a pair of
finitely generated residually torsion-free-nilpotent groups of the
same genus such that the Schur multiplier $H_2(G,\Z)$ of one is
finitely generated, while that of the other group is not?  We shall
settle this question by proving the following theorem. Here, and in
what follows, we take homology with coefficients in $\mathbb Q$. When
we discuss the {\em{dimension}} of a homology group, we mean its
dimension as $\mathbb Q$-vector space.  Note that if $H_n(G,\Z)$ is
finitely generated then $H_n(G,\mathbb Q)$ is finite dimensional.

\begin{theorem} \label{t:H2'}
There exists  a pair of finitely generated residually
torsion-free-nilpotent groups $N\hookrightarrow \G$ that have the same nilpotent genus and the same profinite
completion, but  $\G$ is finitely presented while $H_2(N,\mathbb Q)$ is infinite dimensional.
\end{theorem}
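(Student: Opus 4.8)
The plan is to realise the pair $N\hookrightarrow\G$ as the inclusion of a normal subgroup in a short exact sequence
$$1\to N\to\G\to Q\to 1,$$
where $Q$ is a finitely presented, perfect group with $H_2(Q,\Z)=0$ and no non-trivial finite quotients, where $\G$ is finitely presented and residually torsion-free-nilpotent, and where $N$ is finitely generated. Granting such a sequence, the two ``sameness'' assertions come for free from the earlier machinery: Proposition \ref{p:GPnilp} shows that $u_c:N/N_c\to\G/\G_c$ is an isomorphism for every $c$, so $N$ and $\G$ have the same nilpotent genus; and Proposition \ref{l:pt}(2) shows that $(\G,N)$ is a Grothendieck pair, so $\wh N\cong\wh\G$. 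Thus the entire content of the theorem is concentrated in one additional requirement, namely that $H_2(N,\Q)$ be infinite-dimensional; this is what the construction (the proposed Proposition \ref{p:newD}) must deliver.

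For the underlying sequence I would start from a Rips-type construction, exactly as in the proof of Theorem \ref{solution1}: feeding a suitable $Q$ into the Haglund--Wise version of the Rips construction and then passing to a finite-index subgroup that embeds in a RAAG produces $1\to N\to\G\to Q\to 1$ with $\G$ finitely presented and residually torsion-free-nilpotent, and with $N$ finitely generated (here one uses that $Q$ has no proper subgroups of finite index, so the finite-index subgroup still surjects $Q$). Because the small-cancellation version of the construction is aspherical, one can moreover arrange $\G$ to have cohomological dimension $2$, which keeps the homological bookkeeping tightly controlled. The remaining, decisive, task is to force the kernel $N$ to have infinite-dimensional second homology.

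Here the LHS spectral sequence of $1\to N\to\G\to Q\to 1$ explains both the difficulty and the mechanism. As in the proof of Proposition \ref{p:GPnilp}, $H_1(N,\Q)$ is a trivial $Q$-module and $H_1(Q,\Q)=H_2(Q,\Q)=0$, so on the $E^2$-page the only term in total degree $2$ that can survive is $E^2_{0,2}=H_0(Q,H_2(N,\Q))$, the module of coinvariants. Since $\G$ is finitely presented, $H_2(\G,\Q)$ is finite-dimensional, and the spectral sequence forces $H_0(Q,H_2(N,\Q))$ to be finite-dimensional as well. Consequently $H_2(N,\Q)$ can be infinite-dimensional only if it is infinite-dimensional as a $\Q Q$-module while having finite-dimensional coinvariants; the cleanest way to guarantee this is to arrange that $H_2(N,\Q)$ contains a free $\Q Q$-submodule, which is automatically infinite-dimensional over $\Q$ (as $Q$ is infinite) yet has one-dimensional coinvariants. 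This also shows why a symmetric fibre product will not suffice: a short Künneth computation for $1\to N\times N\to P\to Q\to 1$ shows that the fibre product of $\G\to Q$ has \emph{finite}-dimensional $H_2(-,\Q)$ whenever $Q$ is perfect with $H_2(Q,\Z)=0$, so a genuinely asymmetric construction is needed.

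The main obstacle, and the heart of the matter, is therefore to build $\G$ so that the $2$-cycles of the $N$-cover of its aspherical presentation $2$-complex span an infinite-dimensional space --- equivalently, so that $H_2(N,\Q)$ acquires a free $\Q Q$-summand --- \emph{while simultaneously} keeping $\G$ finitely presented, $N$ finitely generated, $Q$ free of finite quotients, and $\G$ residually torsion-free-nilpotent. These constraints pull against one another through the spectral sequence, so I expect that the relators of $\G$ must be engineered by hand rather than read off from a black-box Rips construction. Once the infinite-dimensionality of $H_2(N,\Q)$ is established by direct inspection of the cellular chain complex of the cover, the theorem follows by assembling this computation with Propositions \ref{p:GPnilp} and \ref{l:pt}(2) as above.
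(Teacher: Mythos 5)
Your reduction of the theorem to the construction of a short exact sequence $1\to N\to\G\to Q\to 1$ with $Q$ superperfect, finitely presented and without finite quotients, $\G$ finitely presented, residually torsion-free-nilpotent and of cohomological dimension $2$ via the Haglund--Wise Rips construction, and $N$ finitely generated, is exactly the paper's framework, and your use of Propositions \ref{p:GPnilp} and \ref{l:pt}(2) for the genus and profinite statements is correct. But the decisive step is missing, and the spectral-sequence analysis that guides your search for it contains an error that steers you away from the actual solution. You assert that, because $H_2(\G,\Q)$ is finite-dimensional, ``the spectral sequence forces $H_0(Q,H_2(N,\Q))$ to be finite-dimensional as well.'' This is false in general: $E^\infty_{0,2}$ is the quotient of $E^3_{0,2}$ by the image of the differential $d_3\colon E^3_{3,0}\to E^3_{0,2}$, and $E^3_{3,0}$ is a subquotient of $H_3(Q,\Q)$. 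If $H_3(Q,\Q)$ is infinite-dimensional while $H_3(\G,\Q)$ is finite-dimensional (e.g.\ zero, because $\G$ has dimension $2$), then $\ker d_3=E^\infty_{3,0}$ is finite-dimensional, so $d_3$ has infinite-dimensional image, and the coinvariants $H_0(Q,H_2(N,\Q))$ --- hence $H_2(N,\Q)$ itself --- are forced to be \emph{infinite}-dimensional. This is precisely Proposition \ref{p:ss} of the paper, and it is the whole mechanism: the infinite-dimensionality is injected through the third homology of the quotient, not engineered by hand in the relators of $\G$ or by planting a free $\Q Q$-submodule in $H_2(N,\Q)$.

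Consequently the burden shifts to producing a finitely presented, torsion-free, superperfect group $Q$ with no non-trivial finite quotients and with $H_3(Q,\Q)$ infinite-dimensional --- the ``designer group'' of Proposition \ref{p:newD} --- after which the Rips construction can be used as a black box exactly as in Theorem \ref{solution1}. The paper builds this $Q$ by doubling $A\times A$ (with $A$ finitely presented, acyclic, without finite quotients, containing $F_2$) along a Stallings--Bieri subgroup $S<F\times F$, so that Mayer--Vietoris gives $H_3(\Delta,\Q)\cong H_2(S,\Q)$ infinite-dimensional, and then passing to the universal central extension $\tilde\Delta$ to kill $H_2$ and restore superperfectness (checking via a second spectral-sequence argument that $H_3(\tilde\Delta,\Q)$ stays infinite-dimensional). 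Your proposal stops short of any such construction --- you explicitly defer it (``I expect that the relators of $\G$ must be engineered by hand'') --- so as written the proof has a genuine gap at its central point, and the degree-$2$ analysis that replaces it would, if taken at face value, rule out the very configuration that makes the theorem true.
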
 

Our proof of the above theorem draws on the ideas in previous sections, augmenting them with a spectral sequence argument 
to control the homology of $N$. The proof also relies on the construction of a group with particular properties that we regard
as having independent interest -- see Proposition \ref{p:newD}.

\subsection{A spectral sequence argument}

\begin{proposition}\label{p:ss} Let $1\to N\to G \to Q\to 1$ be a short exact sequence of finitely generated groups.
If $H_3(G,\mathbb Q)$ is finite dimensional  but $H_3(Q,\mathbb Q)$ is infinite dimensional, then  
$H_2(N,\mathbb Q)$ is infinite dimensional.
\end{proposition}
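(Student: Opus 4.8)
The plan is to run the Lyndon--Hochschild--Serre spectral sequence in homology with $\mathbb Q$-coefficients for the extension $1\to N\to G\to Q\to 1$, and to locate $H_3(Q,\mathbb Q)$ on the $E^2$-page as the corner term $E^2_{3,0}=H_3(Q,H_0(N,\mathbb Q))=H_3(Q,\mathbb Q)$, since $N$ acts trivially on its own $H_0$. The idea is that if every other $E^2_{p,q}$ with $p+q=3$ were finite dimensional, then all the $E^\infty_{p,q}$ on the diagonal $p+q=3$ would be finite dimensional, forcing $H_3(G,\mathbb Q)$ to be finite dimensional as well; so the assumed infinite-dimensionality of $H_3(Q,\mathbb Q)$ must be "absorbed" somewhere, and the natural culprit is a term involving $H_2(N,\mathbb Q)$.

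First I would write out the four terms on the antidiagonal $p+q=3$ of the $E^2$-page:
\[
E^2_{3,0}=H_3(Q,\mathbb Q),\quad E^2_{2,1}=H_2(Q,H_1(N,\mathbb Q)),\quad E^2_{1,2}=H_1(Q,H_2(N,\mathbb Q)),\quad E^2_{0,3}=H_0(Q,H_3(N,\mathbb Q)).
\]
The differentials out of $E^2_{3,0}$ are $d^2:E^2_{3,0}\to E^2_{1,1}$ and $d^3:E^3_{3,0}\to E^3_{0,2}$, both landing in the $p+q=2$ column; so the subquotient $E^\infty_{3,0}$ is a quotient of $H_3(Q,\mathbb Q)$ by the images of $N$'s low-dimensional homology. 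Because the terms on the $p+q=3$ diagonal of $E^\infty$ are the composition factors of a finite filtration of $H_3(G,\mathbb Q)$ (as recalled in the excerpt), the hypothesis that $H_3(G,\mathbb Q)$ is finite dimensional forces $E^\infty_{3,0}$ to be finite dimensional. Hence $H_3(Q,\mathbb Q)$, being infinite dimensional, must map by $d^2$ or $d^3$ with infinite-dimensional image into $E^2_{1,1}=H_1(Q,H_1(N,\mathbb Q))$ or $E^3_{0,2}$, a subquotient of $E^2_{0,2}=H_0(Q,H_2(N,\mathbb Q))$.

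The key step is then a dimension count: since $N$ is finitely generated, $H_1(N,\mathbb Q)$ is finite dimensional, and $Q$ is finitely generated, so $E^2_{1,1}=H_1(Q,H_1(N,\mathbb Q))$ is finite dimensional (a finitely generated group has finite-dimensional $H_1$ with any finite-dimensional coefficient module). Therefore the image of $d^2$ out of $E^2_{3,0}$ is finite dimensional, and the infinite-dimensional "overflow" from $H_3(Q,\mathbb Q)$ must survive to the $E^3$-page and then die under $d^3:E^3_{3,0}\to E^3_{0,2}$. This means $E^3_{0,2}$, and a fortiori its ancestor $E^2_{0,2}=H_0(Q,H_2(N,\mathbb Q))$, is infinite dimensional; since $H_0(Q,H_2(N,\mathbb Q))$ is a quotient of $H_2(N,\mathbb Q)$, we conclude that $H_2(N,\mathbb Q)$ is infinite dimensional, as required.

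The main obstacle I anticipate is bookkeeping the differentials carefully enough to be sure the overflow genuinely lands in a term built from $H_2(N,\mathbb Q)$ and not somewhere that the finite-generation hypotheses already kill. The clean way to organise this is to argue by contrapositive on the corner term: assume $H_2(N,\mathbb Q)$ is finite dimensional; then $E^2_{0,2}$ is finite dimensional, while $E^2_{2,1}$ and $E^2_{1,1}$ are finite dimensional by finite generation of $N$ and $Q$; tracking all incoming and outgoing differentials on the $p+q=3$ diagonal then shows each $E^\infty_{p,q}$ with $p+q=3$ is finite dimensional except possibly $E^\infty_{3,0}$, which is a quotient of the finite-dimensional-modulo-finite-dimensional target, so that $H_3(G,\mathbb Q)$ finite dimensional would force $H_3(Q,\mathbb Q)$ finite dimensional, contrary to hypothesis. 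I would double-check that no nonzero differential can inject an infinite-dimensional piece of $H_3(Q,\mathbb Q)$ into a term that is \emph{not} controlled by $H_2(N,\mathbb Q)$; the only two possible targets for the corner term are $E^2_{1,1}$ and a subquotient of $E^2_{0,2}$, and the former is finite dimensional, which pins the argument down.
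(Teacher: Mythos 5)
Your proposal is correct and is essentially the paper's own argument: both isolate the corner term $E^2_{3,0}=H_3(Q,\mathbb Q)$ of the LHS spectral sequence, kill the differential into $E^2_{1,1}=H_1(Q,H_1(N,\mathbb Q))$ by finite generation of $N$ and $Q$, observe that $E^\infty_{3,0}$ is finite dimensional because it is a section of $H_3(G,\mathbb Q)$, and conclude that $d_3$ must have infinite-dimensional image in $E^3_{0,2}$, a quotient of $H_0(Q,H_2(N,\mathbb Q))$ and hence of $H_2(N,\mathbb Q)$. The contrapositive repackaging at the end adds nothing beyond the direct chase you have already carried out.
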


\begin{proof} As in the proof of Proposition \ref{p:GPnilp} we use the
LHS spectral sequence to calculate the homology of
$G$ in terms of $N$ and $Q$. The terms on the $E^2$ page of
the spectral sequence are
$E^2_{pq} = H_p(Q, H_q(N,\mathbb Q))$.

$H_3(Q, H_0(N,\mathbb Q))$ is infinite dimensional and $H_1(Q, H_1(N,\mathbb Q))$ is finite dimensional,
so $E^3_{30}$, which is the kernel of $d_2: E^2_{30}\to E^2_{11}$, is infinite dimensional. 

The kernel of $d_3 : E^3_{30}\to E^3_{02}$ is 
$E^\infty_{30}$, which is a section of $H_3(G,\mathbb Q)$ and hence  is finite dimensional.
So the image of the map to  $E^3_{02}$ is infinite dimensional. But  $E^3_{02}$
is a quotient of  $H_0(Q, H_2(N,\mathbb Q))$, which in turn is a quotient of $H_2(N,\mathbb Q)$.
Thus  $H_2(N,\mathbb Q)$ is infinite dimensional. 
\end{proof}

\subsection{A designer group} 

Recall that a group $A$ is termed {\em acyclic} (over $\Z$) if $H_i(A,\Z)=0$ for all $i\ge 1$. The Higman group
described in Section \ref{s:b1} was the first example of a finitely presented acyclic group with no
proper subgroups of finite index. Further examples were constructed in \cite{BG}, including, for each integer $p\ge 3$,
$$ 
\langle a_1,a_2, b_1, b_2 \mid a_1^{-1}a_2^pa_1 a_2^{-p-1},\,
 b_1^{-1}b_2^pb_1 b_2^{-p-1},\,
a_1^{-1}[b_2, b_1^{-1}b_2b_1],\, b_1^{-1}[a_2, a_1^{-1}a_2a_1]\rangle.
$$

Let $A$ be one of the above groups. The salient features of $A$ are that  it is
finitely presented,  acyclic over $\Z$, has no finite quotients,  
 contains a 2-generator free group, $F$ say, and is torsion-free (indeed it has a 
2-dimensional classifying space $K(A,1)$, cf.~\cite{BG} p.364). Let $\Delta = (A\times A) \ast_S (A\times A) $
 be the double of $A\times A$ along $S<F\times F$, where $S$ is the
first Stallings-Bieri group, i.e. the kernel of a homomorphism $F\times F\to \Z$ whose restriction to each of the factors is surjective.
The key features of $S$
are that it  is finitely generated but $H_2(S,\mathbb Q)$ is infinite dimensional (see \cite{stall}, or \cite{BH} pp. 482-485).

\begin{lemma}\label{delta} $\Delta$ is torsion-free, finitely presented, has no non-trivial finite quotients,
and  $H_3(\Delta, \mathbb Q)$ is infinite dimensional.
\end{lemma}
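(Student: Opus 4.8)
The plan is to establish each of the four asserted properties of $\Delta = (A\times A) \ast_S (A\times A)$ separately, handling the three ``easy'' properties first and then concentrating on the infinite-dimensionality of $H_3(\Delta,\mathbb Q)$, which I expect to be the genuine content of the lemma. Since $A$ is torsion-free with a 2-dimensional $K(A,1)$, the group $A\times A$ is torsion-free, and $\Delta$ is built as an amalgam of two torsion-free groups over a common subgroup; torsion-freeness of an amalgamated free product follows because every torsion element is conjugate into a factor (by the standard normal-form / Bass--Serre theory for amalgams). Finite presentability is immediate: $A\times A$ is finitely presented (being a direct product of two finitely presented groups), and $S$ is finitely generated, so the amalgam $\Delta$ is finitely presented.

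For the absence of non-trivial finite quotients, I would argue that any finite quotient $\phi:\Delta\to G$ must kill each copy of $A\times A$. Indeed $A$, and hence $A\times A$, has no non-trivial finite quotients (the class of groups with no finite quotients is closed under finite direct products), so $\phi$ is trivial on both copies of $A\times A$. Since $\Delta$ is generated by these two copies, $\phi$ is trivial. Hence $\Delta$ has no non-trivial finite quotients.

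\emph{The hard part} is showing that $H_3(\Delta,\mathbb Q)$ is infinite dimensional, and this is where I would invest the real effort. The natural tool is the Mayer--Vietoris sequence for the amalgamated free product (equivalently, the decomposition of $\Delta$ as the fundamental group of a graph of groups with two vertex groups $A\times A$ and one edge group $S$). This gives a long exact sequence
\[
\cdots \to H_n(S,\mathbb Q) \to H_n(A\times A,\mathbb Q)^{\oplus 2} \to H_n(\Delta,\mathbb Q) \to H_{n-1}(S,\mathbb Q)\to\cdots.
\]
The point is to feed in what we know: by the K\"unneth formula and the acyclicity of $A$, the group $A\times A$ is itself acyclic over $\mathbb Q$ (since $H_*(A,\mathbb Q)$ is concentrated in degree $0$, so is $H_*(A\times A,\mathbb Q)$), so $H_n(A\times A,\mathbb Q)=0$ for all $n\ge 1$. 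Thus in the range $n\ge 2$ the Mayer--Vietoris sequence collapses to isomorphisms $H_n(\Delta,\mathbb Q)\cong H_{n-1}(S,\mathbb Q)$. Taking $n=3$ yields $H_3(\Delta,\mathbb Q)\cong H_2(S,\mathbb Q)$, which is infinite dimensional by the quoted property of the Stallings--Bieri group $S$.

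The one subtlety I would be careful about is the very bottom of the Mayer--Vietoris sequence, to make sure the collapse is clean in degree $3$: I would verify that the relevant maps $H_2(A\times A,\mathbb Q)^{\oplus 2}\to H_2(\Delta,\mathbb Q)\to H_1(S,\mathbb Q)\to H_1(A\times A,\mathbb Q)^{\oplus 2}$ behave as claimed, but since $H_1(A\times A,\mathbb Q)=H_2(A\times A,\mathbb Q)=0$ this forces the connecting map $H_3(\Delta,\mathbb Q)\to H_2(S,\mathbb Q)$ to be an isomorphism with no interference from the vertex homology. This completes all four assertions, with the homological computation reduced entirely to the known infinite-dimensionality of $H_2(S,\mathbb Q)$.
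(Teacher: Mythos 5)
Your proposal is correct and follows essentially the same route as the paper: torsion-freeness and finite presentability from standard facts about amalgams, triviality of finite quotients because the generating copies of $A$ (hence $A\times A$) die in any finite quotient, and the Mayer--Vietoris sequence together with acyclicity of $A\times A$ (K\"unneth) to get $H_3(\Delta,\mathbb Q)\cong H_2(S,\mathbb Q)$. No substantive differences.
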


\begin{proof} The amalgamated free product of two finitely presented groups along a finitely generated subgroup
is finitely presented, so $\Delta$ is finitely presented. And an amalgam of torsion-free groups is 
torsion-free. The four visible copies of $A$ generate $\Delta$, and these
all have trivial image in any finite quotient, so $\Delta$ has no non-trivial finite quotients.
 We calculate $H_3(\Delta, \mathbb Q)$ using the Mayer-Vietoris sequence
(omitting the coefficient module $\mathbb Q$ from the notation): 
$$
\dots H_3(A\times A)\oplus H_3(A\times A)\to H_3(\Delta) \to H_2(S) \to H_2(A\times A)\oplus H_2(A\times A)\to\dots
$$
As $A$ is acyclic, so is $A\times A$, by the K\"unneth formula. Hence $H_3(\Delta, \mathbb Q) \cong H_2(S, \mathbb Q)$
is infinite dimensional.
\end{proof} 

Recall that a group $G$ is termed {\em super-perfect} if $H_1(G,\Z)=H_2(G,\Z)=0$. Proposition \ref{l:pt}
explains our interest in this condition.   $\Delta$ is perfect  but it is not super-perfect.

\begin{lemma}\label{H2D} $H_2(\Delta,\Z)\cong H_1(S,\Z)\cong \Z^3$.
\end{lemma}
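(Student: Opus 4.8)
The plan is to compute $H_2(\Delta,\Z)$ by feeding the splitting $\Delta=(A\times A)\ast_S(A\times A)$ into the Mayer--Vietoris sequence for an amalgamated free product, and then to identify the resulting group $H_1(S,\Z)$ by a direct calculation. Since $A$ is acyclic over $\Z$, the K\"unneth formula shows $A\times A$ is acyclic over $\Z$ too, so $H_1(A\times A,\Z)=H_2(A\times A,\Z)=0$. The Mayer--Vietoris sequence (with $\Z$ coefficients)
$$
H_2(S)\to H_2(A\times A)\oplus H_2(A\times A)\to H_2(\Delta)\to H_1(S)\to H_1(A\times A)\oplus H_1(A\times A)
$$
then collapses to an isomorphism $H_2(\Delta,\Z)\cong H_1(S,\Z)$. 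I expect this reduction to be routine.

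The substance of the lemma is the claim $H_1(S,\Z)=S^{\ab}\cong\Z^3$, and here one must be careful. The five-term exact sequence attached to $1\to S\to F\times F\to\Z\to 1$ only computes coinvariants: writing $F\times F=\langle x,y,u,v\mid [x,u],[x,v],[y,u],[y,v]\rangle$ with all four generators mapping to $1\in\Z$, the sequence $0\to (H_1 S)_{\Z}\to H_1(F\times F,\Z)\to H_1(\Z)\to 0$ reads $0\to (H_1 S)_\Z\to\Z^4\to\Z\to 0$ and gives $(H_1 S)_\Z\cong\Z^3$. This is only the quotient of $H_1(S)$ by the conjugation action of $Q=\Z$, so to conclude I still have to show that this action is trivial rather than exhibiting some larger module structure on $H_1(S)$.

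I would settle this by computing $S^{\ab}$ outright via Reidemeister--Schreier, using the transversal $\{x^n:n\in\Z\}$. The Schreier generators are $g_n=x^n(yx^{-1})x^{-n}$, $h_n=x^n(ux^{-1})x^{-n}$ and $k_n=x^n(vx^{-1})x^{-n}$, on which the generator $x$ of the quotient acts by the index shift $n\mapsto n+1$. Rewriting the four commutator relators conjugated by $x^n$ produces relations of the form $h_{n+1}=h_n$, $k_{n+1}=k_n$, $g_{n+1}=h^{-1}g_n h$ and $g_{n+1}=k^{-1}g_n k$; abelianising these identifies all the $[g_n]$, all the $[h_n]$ and all the $[k_n]$ and imposes no further relation, so $S^{\ab}$ is free abelian of rank $3$ on the classes $[g],[h],[k]$. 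In particular the shift acts trivially on $S^{\ab}$, which is consistent with $(H_1 S)_\Z\cong\Z^3$ and yields $H_1(S,\Z)\cong\Z^3$.

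The only genuine obstacle is this second step, pinning down $H_1(S)$ itself rather than merely its coinvariants; the Reidemeister--Schreier bookkeeping handles it cleanly, but one could instead invoke the known abelianisation of the first Stallings--Bieri group via the references already cited for $S$. Combining the two steps gives $H_2(\Delta,\Z)\cong H_1(S,\Z)\cong\Z^3$, as required.
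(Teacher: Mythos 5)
Your argument is correct. The first step, extracting $H_2(\Delta,\Z)\cong H_1(S,\Z)$ from the Mayer--Vietoris sequence of the amalgam using the acyclicity of $A\times A$, is exactly what the paper does. For the second step the paper takes a different route: it invokes Theorem A of \cite{BM} on subdirect products of free groups, which for a subdirect product $G\le F\times F$ with $L=G\cap(F\times 1)$ gives $H_1(G,\Z)\cong H_1(F,\Z)\oplus H_2(F_1/L,\Z)\oplus \ker\bigl(H_1(F_1,\Z)\to H_1(F_1/L,\Z)\bigr)$; with $F$ free of rank $2$ and $F_1/L\cong\Z$ this is $\Z^2\oplus 0\oplus\Z\cong\Z^3$. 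You instead compute $S^{\mathrm{ab}}$ directly by Reidemeister--Schreier over the transversal $\{x^n\}$, and your bookkeeping is right: the rewritten relators abelianise to identify all the $[g_n]$, all the $[h_n]$ and all the $[k_n]$ with no further relations, giving a free abelian group of rank $3$ and, in passing, triviality of the $\Z$-action on $H_1(S,\Z)$. Your approach is more elementary and self-contained (and your explicit warning that the five-term sequence only yields coinvariants is exactly the point where a careless argument would go wrong); the paper's approach is shorter on the page and generalises immediately to other subdirect products of free groups, at the cost of importing a nontrivial structural theorem.
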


\begin{proof} A slight variant of the above Mayer-Vietoris argument
shows that $H_2(\Delta,\Z)\cong H_1(S,\Z)$. 

The first homology of $S$ can be calculated using Theorem A of \cite{BM}:
if $G\le F\times F$ is a subdirect product and we write $F_1=F\times 1$ and $L=G\cap F_1$, then 
$$H_1(G,\Z)\cong H_1(F,\Z) \oplus H_2(F_1/L,\Z)\oplus C,$$
where $C={\ker} (H_1(F_1,\Z)\to H_1(F_1/L, \Z))$. 

In our case, $G=S$ and $F_1/L=\Z$, so $H_2(F_1/L,\Z)=0$  and $C$ is cyclic. (Recall that $F\cong F_1$ is free of rank $2$.)
\end{proof}

To obtain a super-perfect group, we pass
to the {\em universal central extension} $\widetilde\Delta$.

\begin{proposition}\label{p:newD}
 $\tilde\Delta$ is  torsion-free, finitely presented, super-perfect, has no non-trivial finite quotients,
and  $H_3(\tilde\Delta, \mathbb Q)$ is infinite dimensional.
\end{proposition}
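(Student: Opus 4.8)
The plan is to read off all five properties of $\tilde\Delta$ from the standard theory of universal central extensions together with the homological data already recorded for $\Delta$. Since $\Delta$ is perfect, it admits a universal central extension
\[
1\to H_2(\Delta,\Z)\to\tilde\Delta\to\Delta\to 1,
\]
and by Lemma \ref{H2D} the central kernel is $H_2(\Delta,\Z)\cong\Z^3$. That $\tilde\Delta$ is super-perfect is the defining feature of the universal central extension (one has $H_1(\tilde\Delta,\Z)=H_2(\tilde\Delta,\Z)=0$), so this property is immediate.

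Three of the remaining four properties follow from elementary extension arguments applied to the sequence above. First, $\tilde\Delta$ is torsion-free: both the central kernel $\Z^3$ and the quotient $\Delta$ are torsion-free (Lemma \ref{delta}), and any torsion element of $\tilde\Delta$ maps to a torsion element of $\Delta$, hence lies in $\Z^3$, hence is trivial. Second, $\tilde\Delta$ is finitely presented, being a central extension of the finitely presented group $\Delta$ by the finitely generated abelian group $\Z^3$; such extensions are always finitely presented (lift a finite presentation of $\Delta$, adjoin three central generators for $\Z^3$, and record the finitely many relations expressing centrality, commutativity of the $a_i$, and the values in $\Z^3$ taken by the lifted relators). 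Third, $\tilde\Delta$ has no non-trivial finite quotients: any finite quotient $E$ is perfect, being a quotient of the perfect group $\tilde\Delta$, while the image of the central subgroup $\Z^3$ is a central abelian subgroup of $E$ whose quotient is a finite quotient of $\Delta$ and so is trivial by Lemma \ref{delta}; thus $E$ is abelian and perfect, whence $E=1$.

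The substantive point, and the step I expect to require the most care, is that $H_3(\tilde\Delta,\Q)$ is infinite dimensional. I would run the LHS spectral sequence over $\Q$ for the central extension $1\to\Z^3\to\tilde\Delta\to\Delta\to 1$. Because $\Z^3$ is central, $\Delta$ acts trivially on $H_q(\Z^3,\Q)=\Lambda^q\Q^3$, so over $\Q$ one has $E^2_{pq}=H_p(\Delta,\Q)\otimes_\Q\Lambda^q\Q^3$. The key observation is that the corner term $E^2_{3,0}=H_3(\Delta,\Q)$ is infinite dimensional by Lemma \ref{delta}, whereas every term that can interact with it is finite dimensional. There are no incoming differentials (the relevant sources $E^2_{5,-1}$ and $E^3_{6,-2}$ vanish), the outgoing $d_2\colon E^2_{3,0}\to E^2_{1,1}=H_1(\Delta,\Q)\otimes\Q^3$ is zero because $\Delta$ is perfect, and the only other potentially non-zero differential is $d_3\colon E^3_{3,0}\to E^3_{0,2}$, whose target is a subquotient of the finite-dimensional space $\Lambda^2\Q^3\cong\Q^3$. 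Hence the kernel $E^\infty_{3,0}$ of $d_3$ stays infinite dimensional, all later differentials landing in positions of negative $p$-degree and so vanishing.

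Finally, since $E^\infty_{3,0}$ is a graded piece of the filtration of $H_3(\tilde\Delta,\Q)$ coming from the spectral sequence, the infinite dimensionality of $E^\infty_{3,0}$ forces $H_3(\tilde\Delta,\Q)$ to be infinite dimensional, completing the argument. The only delicate bookkeeping is confirming that no differential can transport the infinite-dimensional class in $E_{3,0}$ into the finite-dimensional columns, which is exactly what the perfectness of $\Delta$ (killing $d_2$) and the smallness of $H_*(\Z^3,\Q)$ (bounding the $d_3$ target) guarantee.
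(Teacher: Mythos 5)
Your proposal is correct and follows essentially the same route as the paper: the universal central extension $1\to H_2(\Delta,\Z)\to\tilde\Delta\to\Delta\to 1$, the elementary extension arguments for torsion-freeness, finite presentability and the absence of finite quotients, and the LHS spectral sequence argument showing $E^\infty_{3,0}$ remains infinite dimensional. The only (harmless) cosmetic differences are that you identify $E^2_{pq}=H_p(\Delta,\Q)\otimes\Lambda^q\Q^3$ explicitly and kill $d_2$ via perfectness of $\Delta$, where the paper simply observes that the first three columns are finite dimensional.
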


\begin{proof} The standard theory of universal central extensions (see \cite{milnor}, Chapter 5) tells us that $\tilde\Delta$
is perfect and that there is a
short exact sequence
$$
1\to H_2(\Delta, \Z)\to \tilde \Delta \to \Delta \to 1.
$$
Since $\Delta$ and $H_2(\Delta, \Z)$ are finitely presented and torsion-free, so is  $\tilde\Delta$. Since $\Delta$ has no non-trivial
finite quotients, $H_2(\Delta, \Z)$ would have to map onto any finite quotient of $\tilde\Delta$, which means that
all such quotients are abelian. Since $\tilde\Delta$ is perfect, it follows that it has no non-trivial finite
quotients (cf.~\cite{BG}, p.369).

To see that  $H_3(\tilde\Delta, \mathbb Q)$ is infinite dimensional, we consider the LHS spectral sequence associated to the
above short exact sequence. As $\Delta$ is finitely presented and $K:=H_2(\Delta, \Z)$ is of type ${\rm{FP}}_\infty$, all of
the groups in the first three columns, $E^2_{pq}=H_p(\Delta, H_q(K,\mathbb Q))$ with $0\le p \le 2$, are finite dimensional.
On the other hand, $E^2_{30}=H_3(\Delta, \mathbb Q)$ is infinite dimensional. Therefore $E^3_{30}=
\ker (E^2_{30}\to E^2_{11})$ and $E^\infty_{30}=E^4_{30} = \ker (E^3_{30}\to E^3_{02})$ are infinite dimensional.
But $E^\infty_{30}$ is a quotient of $H_3(\tilde\Delta, \mathbb Q)$, so $H_3(\tilde\Delta, \mathbb Q)$ is also infinite dimensional.
\end{proof}

\subsection{Proof of Theorem \ref{t:H2'}}

We have constructed a group $\tilde\Delta$ that is super-perfect, finitely presented  and has $H_3(\tilde\Delta,\mathbb Q)$
infinite dimensional. By applying a suitable version of the Rips construction to $\tilde\Delta$ (as in the proof of Theorem \ref{solution1}),
we obtain a short exact sequence
$$
1\to N \to H \to \tilde\Delta \to 1
$$
with $N$ finitely generated and $H$ is a 2-dimensional hyperbolic group that is
 virtually special. Passing to a subgroup of finite index $H_0<H$ and replacing $N$
by $N\cap H_0$, we may assume that $H$ is a subgroup of a RAAG, and 
hence is residually torsion-free-nilpotent. Propositions \ref{p:GPnilp} and \ref{l:pt}(2)
tell us that $N\to H$ induces an isomorphism of pro-nilpotent and profinite completions.
Proposition \ref{p:ss} tells us that $H_2(N,\mathbb Q)$ is infinite dimensional.
\qed
 
\begin{remark}\label{r:Chuck2} We outline an alternative proof 
of Theorem \ref{t:H2'} suggested by Chuck Miller. Let $A$ be a group satisfying the
conditions of Section 6.2.
We fix a finite presentation $A=\<X\mid R\>$ then
augment if by adding a  new generator $x_0$ and a new relation $x_0=1$.
Let $F$ be the free group on $X\cup\{x_0\}$, let $F\to A$ be the natural epimorphism, and let $P<F\times F$ be
the associated fibre product. The results in Section 3 tell us that $P$ and $F\times F$ have the same
nilpotent quotients.
Because $(x_0,1)\in P$ is part of a free basis for $F\times \{1\}$, one can express
$P$ as an HNN extension with stable letter $x_0$ and amalgamated subgroup
$P\cap (\{1\}\times F)$. Then, as in the proof of Theorem 4 of \cite{cfm:qjm}, one can use the
Mayer-Vietoris sequence for the HNN extension to prove that 
$H_2(P,\Z)$ maps onto 
the first homology of the infinitely generated free group ${\rm{ker}}(F\to A)$.
\end{remark}

\section{Normal subgroups of parafree groups}

We settled Baumslag's first three questions by constructing groups of a somewhat pathological nature that
lie in the same nilpotent genus as well-behaved groups. Our solution to 
Problem \ref{baumslag4} is of an entirely different nature: the point here is to prove that parafree groups
share a significant property with free groups.
Correspondingly, the nature of the mathematics that we shall draw on is entirely different.

Our proof of the following theorem relies on a mix
of $L^2$-Betti numbers and profinite group theory that we first
employed in our paper \cite{BCR} with M. Conder.  

\begin{theorem}(=Theorem \ref{t:N})
\label{maininfindex}
If $\Gamma$ is a finitely generated parafree group, then
every finitely generated non-trivial normal subgroup of $\G$ is of finite index.\end{theorem}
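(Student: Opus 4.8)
The plan is to derive finiteness of the index $[\G:N]$ from two facts that are made available earlier in the excerpt: first, the isomorphism of pro-$p$ completions shared by groups of the same nilpotent genus (Remarks in Section 2, items (9)–(10), together with Corollary \ref{sameb1}), which pins down the first Betti number; and second, the announced non-vanishing of the first $L^2$-Betti number $b_1^{(2)}(\G)=r-1>0$ for a parafree group of rank $r\ge 2$. The case $r=1$ is trivial, since a parafree group of rank $1$ is infinite cyclic and every non-trivial subgroup has finite index. So I would assume $r\ge 2$ throughout.

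First I would record the key $L^2$ input: because $\G$ is parafree of rank $r\ge 2$, it has the same pro-$p$ completion as the free group $F_r$ for every prime $p$ (Section 2, item (9)). I would invoke the observation flagged in the introduction (Corollary \ref{c:sameL2}) that the first $L^2$-Betti number of a finitely presented residually torsion-free-nilpotent group is an invariant of its pro-$p$ completion; applying this to $\G$ and $F_r$ gives $b_1^{(2)}(\G)=b_1^{(2)}(F_r)=r-1$, which is strictly positive. The heart of the argument is then a standard $L^2$-Betti number dichotomy for normal subgroups: if $N\vartriangleleft\G$ is non-trivial and finitely generated but of infinite index, I would aim to contradict $b_1^{(2)}(\G)>0$.

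The mechanism I would use is the multiplicativity/vanishing behaviour of $L^2$-Betti numbers in an extension $1\to N\to\G\to Q\to 1$ with $Q=\G/N$ infinite. Here I expect to appeal to the principle that if a group contains an infinite normal subgroup $N$ that is finitely generated (more precisely, whose relevant $L^2$-homology is controlled), then the ambient group has vanishing first $L^2$-Betti number; concretely, an infinite finitely generated normal subgroup forces $b_1^{(2)}(\G)=0$ via the Lück–Gaboriau-type inequalities (a finitely generated infinite $N$ has $b_1^{(2)}(N)<\infty$, and the quotient $Q=\G/N$ being infinite, the extension formula yields $b_1^{(2)}(\G)=0$). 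A cleaner route, which I would prefer to carry out in detail, is: since $\G$ is torsion-free with $b_1^{(2)}(\G)>0$, a theorem on $L^2$-Betti numbers of normal subgroups shows that any infinite normal subgroup $N$ with $b_1^{(2)}(\G)\ne 0$ must itself be non-amenable/infinitely generated — more precisely, the positivity of $b_1^{(2)}(\G)$ is incompatible with the existence of an infinite, finitely generated normal subgroup of infinite index. This contradiction forces $[\G:N]<\infty$.

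The main obstacle, and the step I would spend the most care on, is justifying the precise $L^2$-Betti number statement for the extension $1\to N\to\G\to Q\to 1$: one must ensure the hypotheses (finite generation of $N$, infiniteness of $Q$, the relevant finiteness/$\dim$-finiteness of $H_1^{(2)}$) are exactly those supplied by the Lück–Gaboriau machinery, and that no torsion or amenability assumption is silently needed. Since $N$ is non-trivial, finitely generated, and normal with $N\ne\G$, and $\G$ is finitely generated, the two genuinely distinct cases are ``$N$ has finite index'' (the desired conclusion) and ``$N$ has infinite index, so $Q$ is infinite''; I would phrase the whole deduction as: infinite index would make $Q$ infinite, and an infinite finitely generated normal subgroup in a group of positive first $L^2$-Betti number is impossible, so the index is finite. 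I expect the citation to the general result (Theorem \ref{t:general}) on dense subgroups of free pro-$p$ groups to package exactly this $L^2$ dichotomy, with Theorem \ref{t:N} following as the stated special case.
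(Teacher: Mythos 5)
Your overall strategy is the same as the paper's: identify $\wh{\G}_p\cong\wh{F_r}_p$ from the nilpotent genus, deduce that $b_1^{(2)}(\G)>0$, and then rule out an infinite-index finitely generated normal subgroup via Gaboriau's vanishing theorem for extensions $1\to N\to\G\to Q\to 1$ with $N$ and $Q$ infinite and $b_1^{(2)}(N)<\infty$. Indeed the paper packages exactly this dichotomy as Theorem \ref{t:general} and deduces Theorem \ref{maininfindex} from it, just as you anticipate at the end.

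There is, however, one step in your write-up that does not go through as stated: you invoke Corollary \ref{c:sameL2} to conclude $b_1^{(2)}(\G)=b_1^{(2)}(F_r)=r-1$, but that corollary requires \emph{both} groups to be finitely presented, and it is not known whether finitely generated parafree groups are finitely presentable --- the paper flags precisely this difficulty in Section 7.1. The fix, which is what Proposition \ref{boundL_2} accomplishes, is to use the L\"uck--Osin approximation theorem (valid for finitely generated residually finite groups, but yielding only an inequality via $\limsup$) on the tower $L(d)=\G\cap\overline{N(d)}$ inside $\wh{F}_p$, together with L\"uck's exact approximation theorem on the finitely presented group $F$; this gives only the one-sided bound $b_1^{(2)}(\G)\ge b_1^{(2)}(F_r)=r-1$. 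Since your argument uses nothing beyond the strict positivity of $b_1^{(2)}(\G)$, this inequality suffices and the rest of your proof is sound; you should also note explicitly that $N$ is infinite because $\G$ is torsion-free (being residually torsion-free-nilpotent), and that a finitely generated group automatically has finite first $L^2$-Betti number, which is what feeds Gaboriau's hypothesis.
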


\subsection{$L^2$-Betti numbers} The standard reference for this material is L\"uck's treatise \cite{Lu2}.
In what follows $b_1(X)$ denotes the usual first Betti number of a group, and $b_1^{(2)}$ denotes
the first $L^2$-Betti number. We shall not recall the definition of the $L^2$-Betti number as it does not
inform our arguments. 
In the case of finitely presented groups, one can use
{\em L\"{u}ck's Approximation Theorem} \cite{Lu1} to give a surrogate definition of $b_1^{(2)}$: Suppose that
$\G$ is {\em finitely presented} and let
$$\G = N_1 > N_2 > \ldots > N_m > \ldots,$$
\noindent be a sequence of normal subgroups, each of finite index in  $\G$, with $\bigcap_m N_m =1$; L\"uck proves that
$$\lim_{m\rightarrow \infty} 
        \frac{b_1(N_m)}{[\G:N_m]} = b_1^{(2)}(\G).$$

\begin{example}\label{bF} Let $F$ be a free group of rank $r$. Euler characteristic tells us that a subgroup of index $d$ in $F$ is
free of rank $d(r-1) +1$, so by L\"{u}ck's Theorem $b_1^{(2)}(F_r)= r-1$. A similar calculation shows that if $\Sigma$ is the
fundamental group of a closed surface of genus $g$, then $b_1^{(2)}(\Sigma) =2g-2$.
\end{example}      
        
If one assumes only that the group $\G$ is {\em finitely generated}, then one does not know if the above limit exists, and
when it does exist one does not know if it is independent of the chosen tower of subgroups. This is a problem in the context of
Theorem \ref{maininfindex} because we do not know if finitely generated parafree groups are finitely presentable. 
Thus we appeal instead to the weaker form of L\"uck's
approximation theorem established for finitely generated groups by L\"uck and Osin \cite{LO}.

\begin{theorem}
\label{luck}
If $\G$ is a finitely generated residually finite group and $(N_m)$ is a  sequence of finite-index
normal subgroups with $\bigcap_m N_m =1$, then
$$\limsup_{m\rightarrow \infty}
        \frac{b_1(N_m)}{[\G:N_m]} \leq b_1^{(2)}(\G).$$
\end{theorem}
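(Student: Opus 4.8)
The plan is to realise both sides of the inequality as von~Neumann dimensions of null spaces of combinatorial Laplacians and to compare them by the method of moments. Fix a finite generating set $s_1,\dots,s_g$ for $\G$ and a presentation $2$-complex $X$ with $\pi_1(X)\cong\G$ (allowing infinitely many $2$-cells, since $\G$ need not be finitely presentable). For each $m$ the subgroup $N_m$ corresponds to a finite cover $X_m\to X$ with $\pi_1(X_m)=N_m$, so $b_1(N_m)=\dim_{\Q}H_1(X_m,\Q)$. Writing $A_m=\Q[\G/N_m]$, the cellular chain complex of $X_m$ is the complex of the universal cover $\widetilde X$ after applying $-\otimes_{\Q\G}A_m$; the algebra $A_m$ is semisimple and carries the normalised trace $\tau_m$ (take the coefficient of $1$ and divide by $[\G:N_m]$). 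With respect to $\tau_m$, the ratio $b_1(N_m)/[\G:N_m]$ is exactly the $\tau_m$-dimension of the null space of the degree-one Laplacian $\Delta_m=\partial_1^{*}\partial_1+\partial_2\partial_2^{*}$ on $C_1(X_m,\Q)$, while $b_1^{(2)}(\G)$ is the $\mathcal N\G$-dimension of the null space of the analogous Laplacian $\Delta^{(2)}$ with $\ell^2(\G)$-coefficients.

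First I would dispose of the case in which $X$ has only finitely many $2$-cells, so that $\Delta_m$ and $\Delta^{(2)}$ are genuine finite matrices over $A_m$ and over $\mathcal N\G$. Let $\mu_m$ be the spectral measure of $\Delta_m$ relative to $\tau_m$ and let $\mu$ be that of $\Delta^{(2)}$ relative to the von~Neumann trace $\tau$; all of these measures are supported in a single interval $[0,C]$, with $C$ depending only on $g$ and on the relator lengths. The crux of this case is that the moments converge: for each fixed $k$ one has $\tau_m(\Delta_m^{k})\to\tau((\Delta^{(2)})^{k})$. This is the only point at which residual finiteness and $\bigcap_m N_m=1$ are used: the trace of $(\Delta^{(2)})^{k}$ is determined by how the finitely many group elements of word length at most $Ck$ appearing in $\Delta^{k}$ multiply, and once $N_m$ meets this finite set only in the identity the quantity seen by $\tau_m$ agrees with that seen by $\tau$. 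Since all the measures live on the fixed bounded interval, convergence of every moment forces weak convergence $\mu_m\to\mu$.

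The inequality is then delivered by one half of the portmanteau theorem: because $\{0\}$ is closed, weak convergence gives $\limsup_m\mu_m(\{0\})\le\mu(\{0\})$, which is precisely $\limsup_m b_1(N_m)/[\G:N_m]\le b_1^{(2)}(\G)$. I would stress that this is exactly the estimate that weak convergence supplies for free: the atom at $0$ is only upper semicontinuous. The reverse inequality, needed for the genuine limit in L\"uck's theorem, requires a uniform lower bound on the spectral mass immediately above $0$ coming from a Fuglede--Kadison determinant estimate, and that estimate is available only when there are finitely many relators; this is why one must be content with $\limsup\le$ in the merely finitely generated setting.

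The main obstacle is the passage to the case where $\G$ is finitely generated but not finitely presentable, for then the relation module, and with it $\Delta_m$ in degree one, can be infinite and the moment computation above does not directly make sense. I would handle this by exhausting $X$ by finite subcomplexes $X_k$ that contain the $1$-skeleton together with the first $k$ two-cells, so that $\pi_1(X_k)=\G_k\epi\G$. Since $X_k\subseteq X$, dropping the remaining relators can only enlarge first homology, so $b_1(N_m)$ is bounded by the first Betti number of the corresponding finite cover of $X_k$; applying the finitely presented argument above to $X_k$ with the tower $(N_m)$ therefore bounds $\limsup_m b_1(N_m)/[\G:N_m]$ by $b_1^{(2)}$ of the $\G$-cover $Y_k$ of $X_k$ determined by $\ker(\G_k\epi\G)$. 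It remains to let $k\to\infty$ and prove that $b_1^{(2)}(Y_k)\to b_1^{(2)}(\G)$; since the $Y_k$ exhaust the universal cover $\widetilde X$, this is a continuity statement for the first $L^2$-Betti number along the increasing family of relation submodules, provable from the additivity and cofinality of $\dim_{\mathcal N\G}$. I expect this continuity step, and the bookkeeping needed to interchange the limits over $m$ and over $k$, to be the hardest part of the argument.
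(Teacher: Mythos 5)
The paper offers no proof of this statement: it is quoted verbatim from L\"uck--Osin \cite{LO}, so there is no internal argument to compare yours against. What you have written is, in outline, a correct reconstruction of the standard proof. The easy half of L\"uck approximation for a fixed finite matrix over $\Q\G$ --- convergence of moments of the normalised spectral measures, hence weak convergence on a common compact interval, hence $\limsup_m\mu_m(\{0\})\le\mu(\{0\})$ because the indicator of the closed set $\{0\}$ is only upper semicontinuous under weak convergence --- is applied not to a presentation complex of $\G$ itself but to the finite free $\G$-complexes $Y_k$ coming from truncations of the relation set; one then lets $k\to\infty$ using normality of $\dim_{\mathcal N\G}$ on the increasing closed subspaces $\overline{\mathrm{im}\,\partial_2^{(k)}}$ to get $b_1^{(2)}(Y_k)\downarrow b_1^{(2)}(\G)$. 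You are also right that the Fuglede--Kadison determinant estimate enters only in the reverse inequality, which is exactly why only $\limsup\le$ survives for merely finitely generated groups. The limit interchange you flag as the hardest part is in fact trivial: your bound $\limsup_m b_1(N_m)/[\G:N_m]\le b_1^{(2)}(Y_k)$ holds for every $k$ with a left-hand side independent of $k$, so you just take the infimum over $k$; the real content is the normality statement, which is standard.

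One genuine caveat. Your moment-convergence step uses more than $\bigcap_m N_m=1$: it needs every finite subset of $\G\setminus\{1\}$ to be eventually disjoint from $N_m$, which follows if the sequence is descending (as it is in \cite{Lu1}, \cite{LO}, and in the paper's application via the subgroups $L(d)$ in Proposition \ref{boundL_2}) but not for an arbitrary sequence with trivial intersection. The statement as literally printed is false without nesting: in $\G=\Z$ take $N_m=2\Z$ for $m$ even and $N_m=m!\,\Z$ for $m$ odd; then $\bigcap_m N_m=1$ while $\limsup_m b_1(N_m)/[\G:N_m]=1/2>0=b_1^{(2)}(\Z)$. You should add the descending-chain hypothesis at the point where you assert that $\tau_m$ and $\tau$ eventually agree on the support of $\Delta^{k}$.
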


Another result about $L^2$-Betti numbers that we will make use of is
the following theorem of Gaboriau (see \cite{Gab} Theorem 6.8).

\begin{theorem}
\label{gab}
Suppose that 
$$1\rightarrow N\rightarrow \Gamma\rightarrow \Lambda\rightarrow 1$$
is an exact sequence of groups where $N$ and $\Lambda$ are infinite. 
If $b_1^{(2)}(N)<\infty$, then $b_1^{(2)}(\Gamma)=0$.\end{theorem}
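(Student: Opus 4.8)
The plan is to carry the Lyndon--Hochschild--Serre spectral sequence argument used in Propositions \ref{p:GPnilp} and \ref{p:ss} into the setting of $L^2$-homology, working not with ordinary $\Q$-dimension but with L\"uck's von Neumann dimension $\dim_{\mathcal{N}(\G)}$ over the group von Neumann algebra $\mathcal{N}(\G)$; the two properties that make the argument run are that this dimension is additive on short exact sequences and is monotone under passage to subquotients (see \cite{Lu2}). Recall that in this algebraic framework $b_n^{(2)}(\G)=\dim_{\mathcal{N}(\G)}H_n(\G;\mathcal{N}(\G))$. Two facts about $N$ enter. First, since $N$ is infinite, $b_0^{(2)}(N)=0$, and by L\"uck's restriction principle $\dim_{\mathcal{N}(\G)}H_0(N;\mathcal{N}(\G))=b_0^{(2)}(N)=0$. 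Second, by hypothesis $\dim_{\mathcal{N}(\G)}H_1(N;\mathcal{N}(\G))=b_1^{(2)}(N)<\infty$.

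First I would set up the homological LHS spectral sequence for $1\to N\to\G\to\Lambda\to 1$ with coefficients in $\mathcal{N}(\G)$, whose $E^2$-page is $E^2_{pq}=H_p(\Lambda;\,H_q(N;\mathcal{N}(\G)))$ and which converges to $H_{p+q}(\G;\mathcal{N}(\G))$. Additivity of $\dim_{\mathcal{N}(\G)}$ along the filtration of $H_1(\G;\mathcal{N}(\G))$, together with the fact that each $E^\infty_{pq}$ is a subquotient of $E^2_{pq}$ and that dimension does not increase under subquotients, yields
$$
b_1^{(2)}(\G)\;\le\;\dim_{\mathcal{N}(\G)}E^2_{0,1}+\dim_{\mathcal{N}(\G)}E^2_{1,0}.
$$
The term $E^2_{1,0}=H_1(\Lambda;\,H_0(N;\mathcal{N}(\G)))$ vanishes in dimension because its coefficient module $H_0(N;\mathcal{N}(\G))$ already has $\dim_{\mathcal{N}(\G)}=0$ by the first fact above. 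Since $L^2$-Betti numbers are non-negative, it therefore suffices to prove $\dim_{\mathcal{N}(\G)}E^2_{0,1}=0$.

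The crux, and the step I expect to be the main obstacle, is exactly this last vanishing: $E^2_{0,1}=H_0(\Lambda;\,H_1(N;\mathcal{N}(\G)))$ is the module of $\Lambda$-coinvariants of a coefficient module of \emph{finite} $\mathcal{N}(\G)$-dimension, and one must show that taking coinvariants under the \emph{infinite} group $\Lambda$ dilutes this finite dimension to zero. This is the dimension-theoretic analogue of the vanishing of $b_0^{(2)}$ for an infinite group. Concretely, one reduces, using additivity and the cofinal definition of $\dim_{\mathcal{N}(\G)}$, to understanding the $\Lambda$-coinvariants functor on finitely generated modules, the model computation being that $H_0(\Lambda;\mathcal{N}(\G))=\mathcal{N}(\G)\otimes_{\Z\Lambda}\Z$ has $\dim_{\mathcal{N}(\G)}$ equal to $b_0^{(2)}(\Lambda)=0$ because $\Lambda$ is infinite. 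The delicate point is justifying this reduction for a module carrying both the $\mathcal{N}(\G)$-structure and the residual $\Lambda$-action inherited from the $\G$-action, and confirming the finiteness hypotheses under which the spectral-sequence comparison of von Neumann dimensions is valid. Granting this, $b_1^{(2)}(\G)\le 0$, and hence $b_1^{(2)}(\G)=0$.
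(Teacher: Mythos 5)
You should first note that the paper does not prove this statement at all: it is quoted verbatim as a theorem of Gaboriau, with the citation ``\cite{Gab} Theorem 6.8'', so there is no in-paper proof to compare against. Your proposal instead reconstructs the standard \emph{algebraic} route to this result (essentially L\"uck's proof, Theorem 7.2(5) of his book \cite{Lu2}), via the homological LHS spectral sequence with coefficients in $\mathcal{N}(\Gamma)$ and the extended von Neumann dimension; this is a genuinely different method from Gaboriau's, which works with measured equivalence relations. Your bookkeeping up to the reduction is sound: additivity and monotonicity of $\dim_{\mathcal{N}(\Gamma)}$ give $b_1^{(2)}(\Gamma)\le\dim E^2_{0,1}+\dim E^2_{1,0}$, and $E^2_{1,0}$ dies because its coefficient module $H_0(N;\mathcal{N}(\Gamma))$ has dimension $b_0^{(2)}(N)=0$ ($N$ infinite), dimension being additive over arbitrary direct sums and monotone under subquotients.

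The genuine gap is exactly the step you flag and then wave through with ``granting this'': the vanishing of $\dim_{\mathcal{N}(\Gamma)}H_0(\Lambda;H_1(N;\mathcal{N}(\Gamma)))$. The heuristic you offer --- that coinvariants under an \emph{infinite} group ``dilute'' a finite dimension to zero, by analogy with $b_0^{(2)}(\Lambda)=0$ --- is false as a general principle: if $\Lambda$ acts trivially on a module $M$ of finite positive $\mathcal{N}(\Gamma)$-dimension, then $H_0(\Lambda;M)=M$ and nothing is diluted. So the reduction ``to the model computation $H_0(\Lambda;\mathcal{N}(\Gamma))$'' cannot be a formal consequence of additivity and cofinality; it must exploit the specific structure of the coefficient module, namely that $H_1(N;\mathcal{N}(\Gamma))$ is induced from $\mathcal{N}(N)$ (i.e.\ of the form $\mathcal{N}(\Gamma)\otimes_{\mathcal{N}(N)}H_1(N;\mathcal{N}(N))$, of finite dimension by the hypothesis $b_1^{(2)}(N)<\infty$) and that the residual $\Lambda$-action is intertwined with left multiplication by coset representatives of $N$ in $\Gamma$. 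The actual argument (L\"uck's key lemma supporting Theorem 7.2(5)) uses infinitely many elements of $\Gamma$ mapping to distinct elements of the infinite group $\Lambda$ to show that any finitely generated submodule surviving in the coinvariants would force infinitely many ``disjoint'' copies of a fixed positive dimension inside a finite-dimensional module, a contradiction. Without this lemma your proof establishes only $b_1^{(2)}(\Gamma)\le b_1^{(2)}(N)\cdot$(something unbounded), not the vanishing; the conclusion is correct, but the one step that carries all the content is missing.
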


\subsection{$L^2$-Betti numbers of dense subgroups}

The key step in the proof of Theorem \ref{maininfindex} is the following result (cf. \cite{BCR} Proposition 3.2). Recall that we write $\wh{\G}_p$ to denote the pro-$p$ completion of a
group $\G$, and $\overline{H}$ to denote the closure of a subgroup in $\wh{\G}_p$.

\begin{proposition}
\label{boundL_2}
Let $\G$ be a finitely generated group and let $F$ be a finitely presented group
that is residually-$p$ for some prime $p$. Suppose that 
there is an injection $\G\hookrightarrow \widehat{F}_{p}$
and that $\overline{\G} = \widehat{F}_{p}$.  Then
$b_1^{(2)}(\G) \geq b_1^{(2)}(F)$.\end{proposition}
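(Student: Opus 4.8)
The goal is to show $b_1^{(2)}(\G) \ge b_1^{(2)}(F)$ under the hypothesis that $\G$ embeds as a dense subgroup of $\wh{F}_p$. The natural strategy is to compare the two quantities through a common tower of finite-index subgroups, exploiting L\"uck's Approximation Theorem for the finitely presented group $F$ and the weaker L\"uck--Osin inequality (Theorem \ref{luck}) for the finitely generated group $\G$. The plan is to use the open normal subgroups of $\wh{F}_p$ of $p$-power index as the bridge: each such subgroup $U<\wh{F}_p$ intersects $\G$ in a finite-index normal subgroup $U\cap\G$, and by Lemma \ref{l:same} it is also the closure of a corresponding $p$-power-index subgroup of $F$.

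\medskip

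\noindent\textbf{Proof plan.} First I would fix a descending chain of open normal subgroups $\wh{F}_p = U_1 > U_2 > \cdots$ with $\bigcap_m U_m = 1$, each of $p$-power index, forming a fundamental system of neighbourhoods of $1$ (for instance the terms $\overline{N(d)}$ of the pro-$p$ topology supplied by Corollary \ref{c:exhaust-p}, or simply the closures of the lower central $p$-series of $F$). Because $\G$ is dense, setting $\G_m := U_m\cap\G$ gives a chain of finite-index normal subgroups of $\G$ with $\bigcap_m\G_m = 1$, and $[\G:\G_m] = [\wh{F}_p : U_m]$ since density forces $\G$ to surject onto the finite $p$-group $\wh{F}_p/U_m$. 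Simultaneously, writing $F_m := U_m\cap F$, Proposition \ref{correspondence} (with $\mathcal C$ the class of finite $p$-groups) gives $[F:F_m] = [\wh{F}_p : U_m]$ as well, so the two towers have matching indices at every level. The crucial comparison is between the ordinary Betti numbers $b_1(\G_m)$ and $b_1(F_m)$: here I would invoke Lemma \ref{l:same}, which identifies $\overline{F_m} = U_m$ with the pro-$p$ completion $(\wh{F_m})_p$, together with the density of $\G_m$ in $U_m$. Thus $\G_m$ is a dense subgroup of $(\wh{F_m})_p$, and Lemma \ref{l:denseb1} yields $b_1(\G_m) \ge b_1(F_m)$ for every $m$.

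\medskip

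\noindent Assembling these ingredients: dividing the inequality $b_1(\G_m)\ge b_1(F_m)$ by the common index $[\G:\G_m]=[F:F_m]$ and taking the appropriate limits, I would apply L\"uck's Approximation Theorem to the finitely presented group $F$ to get $\lim_m b_1(F_m)/[F:F_m] = b_1^{(2)}(F)$, while Theorem \ref{luck} (L\"uck--Osin) gives $\limsup_m b_1(\G_m)/[\G:\G_m] \le b_1^{(2)}(\G)$ for the merely finitely generated group $\G$. Chaining the inequalities,
$$
b_1^{(2)}(\G) \;\ge\; \limsup_{m\to\infty}\frac{b_1(\G_m)}{[\G:\G_m]} \;\ge\; \lim_{m\to\infty}\frac{b_1(F_m)}{[F:F_m]} \;=\; b_1^{(2)}(F),
$$
which is the desired conclusion.

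\medskip

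\noindent\textbf{Main obstacle.} The technical heart, and the step most likely to require care, is the term-by-term comparison $b_1(\G_m)\ge b_1(F_m)$, because it demands that the dense inclusion $\G\hookrightarrow\wh{F}_p$ descend compatibly to dense inclusions of the finite-index subgroups into the right pro-$p$ completions. This is exactly where the closure-under-extension property of $p$-groups (and hence Lemma \ref{l:same}, which fails for general finite nilpotent groups) is essential: one needs $\overline{F_m}$ to be genuinely the pro-$p$ completion of $F_m$, not merely a closed subgroup, so that Lemma \ref{l:denseb1} applies with $F_m$ in place of $F$. Verifying that $\G_m$ is dense in $U_m=\overline{F_m}$ (rather than in some smaller closed subgroup) also uses density of $\G$ in $\wh{F}_p$ crucially. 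Once this descent is in place the rest is a formal limit argument, so I expect essentially all the difficulty to be concentrated in correctly matching up the three towers and justifying the per-level homology comparison.
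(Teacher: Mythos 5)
Your proposal is correct and follows essentially the same route as the paper: the authors also take the tower $\overline{N(d)}$ from Corollary \ref{c:exhaust-p}, set $L(d)=\G\cap\overline{N(d)}$, match indices via density, use Lemma \ref{l:same} together with Lemma \ref{l:denseb1} to get $b_1(L(d))\ge b_1(N(d))$, and then chain L\"uck--Osin for $\G$ against L\"uck's Approximation Theorem for $F$. The point you flag as the technical heart --- that $\overline{N(d)}$ must genuinely be the pro-$p$ completion of $N(d)$ so that the per-level Betti comparison applies --- is exactly the step the paper isolates as well.
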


\begin{proof} For each positive integer $d$ let $N(d)< F$ be the
  intersection of all normal subgroups of $p$-power index at most $d$ in $F$.
  Let $L(d) = \G\cap\overline{N(d)}<\widehat{F}_p$.  We saw
  in Corollary \ref{c:exhaust-p} that $\bigcap_d \overline N(d) = 1$,
  hence $\bigcap_d L(d)=1$.  Since $\G$ and $F$ are both dense in
  $\widehat{F}_p$, the restriction of $\widehat{F}_p\to
  \widehat{F}_p/\overline{N(d)}$ to each of these subgroups is
  surjective, and therefore
$$[\G : L(d)] = [\widehat{F}_p: \overline{N(d)}]= [F: N(d)].$$ 

$L(d)$ is dense in $\overline{N(d)}$ and in Lemma \ref{l:same} we saw
that $\widehat{N(d)}_p \cong \overline{N(d)}$, so Lemma
\ref{l:denseb1} implies that $b_1(L(d)) \ge b_1(N(d))$.  We use the
towers $(L(d))$ in $\G$ and $(N(d))$ in $F$ to compare $L^2$-betti
numbers, applying Theorem \ref{luck} to the finitely generated group
$\G$ and L\"uck's Approximation Theorem to the finitely presented
group $\G$:
$$b_1^{(2)}(\G)\geq \displaystyle \limsup\limits_{d\rightarrow \infty}
{{{b_1(L(d))}\over{[\G:L(d)]}} }\geq\displaystyle \limsup\limits_{d\rightarrow \infty}
{{{b_1(N(d))}\over{[F:N(d)]}} }=\displaystyle \lim\limits_{d\rightarrow \infty} {{{b_1(N(d))}\over{[F:N(d)]}} }=
b_1^{(2)}(F)$$ 
as required.
\end{proof} 

\begin{corollary}\label{c:sameL2}
Let $\Lambda$ and $\G$ be finitely presented groups that are residually-$p$ for some prime $p$.
If $\widehat{\Gamma}_p\cong\widehat{\Lambda}_p$ then 
$b_1^{(2)}(\G) = b_1^{(2)}(\Lambda) $.
\end{corollary}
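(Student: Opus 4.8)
The plan is to deduce this symmetric statement from the one-sided inequality of Proposition \ref{boundL_2} by applying that proposition twice, once in each direction. The whole content is the observation that the hypotheses of Proposition \ref{boundL_2} are met in both directions because \emph{both} $\G$ and $\Lambda$ are finitely presented (and hence, in particular, finitely generated) and residually-$p$. (Here I read the displayed equality as $b_1^{(2)}(\G)=b_1^{(2)}(\Lambda)$; the subscripts $\G_1,\G_2$ appear to be a typographical slip.)

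First I would record the set-up. Since $\G$ and $\Lambda$ are residually-$p$, the canonical maps $\G\to\wh{\G}_p$ and $\Lambda\to\wh{\Lambda}_p$ are injective, and their images are dense by construction of the completion. Fix a topological isomorphism $\theta:\wh{\G}_p\to\wh{\Lambda}_p$ realising the hypothesis $\wh{\G}_p\cong\wh{\Lambda}_p$. Composing the embedding $\G\hookrightarrow\wh{\G}_p$ with $\theta$ exhibits $\G$ as a subgroup of $\wh{\Lambda}_p$, and since $\theta$ is a homeomorphism the image is dense; that is, $\overline{\theta(\G)}=\wh{\Lambda}_p$.

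Now I would apply Proposition \ref{boundL_2} with the dense finitely generated group taken to be $\G$ (via $\theta$) and with $F:=\Lambda$; this is legitimate because $\Lambda$ is finitely presented and residually-$p$, exactly the hypotheses placed on $F$ there. The conclusion is $b_1^{(2)}(\G)\ge b_1^{(2)}(\Lambda)$. Reversing the roles — using $\theta^{-1}$ to realise $\Lambda$ as a dense subgroup of $\wh{\G}_p$ and taking $F:=\G$, which is again finitely presented and residually-$p$ — yields the opposite inequality $b_1^{(2)}(\Lambda)\ge b_1^{(2)}(\G)$. Combining the two gives the desired equality.

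There is no genuine obstacle here beyond bookkeeping: the only points that require care are that the abstract isomorphism $\wh{\G}_p\cong\wh{\Lambda}_p$ be (or be taken to be) a homeomorphism, so that density and closures are preserved when passing through $\theta$, and that the finite-presentation hypothesis on the group playing the role of $F$ in Proposition \ref{boundL_2} is available on \emph{both} sides — which is precisely why the corollary is stated for finitely presented groups rather than merely finitely generated ones.
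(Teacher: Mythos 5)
Your proof is correct and is exactly the argument the paper intends: the corollary is stated as an immediate consequence of Proposition \ref{boundL_2}, obtained by applying that proposition twice with the roles of $\G$ and $\Lambda$ exchanged, and you correctly note both the typographical slip ($\G_1,\G_2$ for $\G,\Lambda$) and the point that the isomorphism of pro-$p$ completions may be taken to be topological (which, by the results of \cite{An} and \cite{NS} quoted in Section 2, is automatic for finitely generated groups). No further comment is needed.
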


By combining Proposition \ref{boundL_2} with Theorem \ref{gab} we deduce:

\begin{theorem}\label{t:general} Let $\G$ be a finitely generated group and let $N\vartriangleleft\G$ be a non-trivial normal
subgroup. Let $F$ be a finitely presented group that is residually-$p$
for some prime $p$ and suppose that there is an injection 
$\G\hookrightarrow \wh{F}_p$ with dense image.
If $b_1^{(2)}(F)>0$, then either $b_1^{(2)}(N) =\infty$ or else $|\G/N|<\infty$. In particular, 
if $N$ is finitely generated then it is of finite index.
\end{theorem}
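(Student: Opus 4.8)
The plan is to read this theorem as a dichotomy obtained by playing a \emph{lower} bound on $b_1^{(2)}(\G)$ against Gaboriau's vanishing theorem. Proposition \ref{boundL_2} was tailored precisely to supply the lower bound: since $\G$ is finitely generated and injects with dense image into $\wh{F}_p$, where $F$ is finitely presented and residually-$p$, it yields $b_1^{(2)}(\G) \ge b_1^{(2)}(F) > 0$. The whole strategy is then to feed the extension $1 \to N \to \G \to \G/N \to 1$ into Theorem \ref{gab} and read off the conclusion contrapositively.

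Concretely, I would first record $b_1^{(2)}(\G) > 0$ from Proposition \ref{boundL_2}. Next, suppose $N$ does \emph{not} have finite index, so that $Q := \G/N$ is infinite, and suppose towards a contradiction that $b_1^{(2)}(N) < \infty$. Granting for the moment that $N$ is infinite as well, the extension $1 \to N \to \G \to Q \to 1$ has both kernel and quotient infinite, so Gaboriau's Theorem \ref{gab} applies and forces $b_1^{(2)}(\G) = 0$, contradicting the lower bound. Hence whenever $Q$ is infinite we must have $b_1^{(2)}(N) = \infty$, which is exactly the stated dichotomy. The ``In particular'' clause is then immediate: a finitely generated group has finite first $L^2$-Betti number, so if $N$ is finitely generated then $b_1^{(2)}(N) < \infty$, and the dichotomy leaves only the option $|\G/N| < \infty$.

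The step requiring care, and the only place the argument is not purely formal, is the clause ``granting that $N$ is infinite,'' since Theorem \ref{gab} genuinely needs both $N$ and $Q$ to be infinite; the remaining content beyond the two quoted results is thus the observation that a non-trivial normal $N$ cannot be finite while $\G/N$ is infinite. In the situations of interest this is automatic: the completions $\wh{F}_p$ arising in our applications (for instance when $F$ is free, as in Theorem \ref{maininfindex}) are torsion-free pro-$p$ groups, so the dense subgroup $\G \le \wh{F}_p$ and all of its subgroups are torsion-free, and a non-trivial subgroup is then necessarily infinite. I would therefore carry the torsion-freeness of $\wh{F}_p$ through from the applications to dispose of the finite-kernel case, and present the two-line extension argument above as the heart of the proof.
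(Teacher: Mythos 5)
Your argument is exactly the paper's: the authors give no more proof than the single line ``By combining Proposition \ref{boundL_2} with Theorem \ref{gab} we deduce [the theorem]'', and your first two paragraphs are precisely that combination, correctly executed. What you add in your final paragraph is not pedantry but a genuine observation: Theorem \ref{gab} does require $N$ infinite, and the theorem as literally stated fails without some hypothesis guaranteeing this. For instance, take $F=\G=F_2\times\Z/p$ with $N=1\times\Z/p$: here $F$ is finitely presented and residually-$p$, $\G$ is dense in $\wh{F}_p$, and $b_1^{(2)}(F)=1/p>0$, yet $b_1^{(2)}(N)=0$ while $\G/N\cong F_2$ is infinite. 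So the finite-kernel case must be excluded by an added hypothesis (e.g.\ $N$ infinite, or $\G$ torsion-free), and your proposed repair --- carrying torsion-freeness of $\wh{F}_p$ through from the applications --- is exactly what is needed for the one place the theorem is used, namely Theorem \ref{maininfindex}, where $F$ is free and $\wh{F}_p$ is a free pro-$p$ group, hence torsion-free, so every non-trivial subgroup of $\G$ is infinite. In short: your proof coincides with the paper's intended one and is more careful than the paper itself on this point.
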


\bigskip

\noindent{\bf{Proof of Theorem \ref{maininfindex}.}} 
Suppose now that $\G$ is a finitely generated parafree group, with the
same nilpotent genus as the free group $F$, say. Finitely
generated groups with the same nilpotent genus have isomorphic pro-$p$
completions for every prime $p$ (cf.~Remark 2.1(9)), so
 $\wh{\G}_p\cong\wh{F}_p$. 
 
 If $F$ is cyclic, then it is
easy to see that $\Gamma$ must also be cyclic, so the conclusion of
Theorem \ref{maininfindex} holds. Therefore we assume that $F$ has rank
$r>1$.  

$\G$ is residually-$p$ for all primes $p$. Indeed, since
$\G$ is residually nilpotent, every non-trivial element of $\G$ has a
non-trivial image in $\G/\G_c\cong F/F_c$ for some term $\G_c$ of the
lower central series, and the free nilpotent group $F/F_c$ is
residually-$p$ for all primes $p$ (see \cite{Gru}).  Combining this
observation with the conclusion of the first paragraph, we
see that the natural map from $\G$ to $\wh{\G}_p\cong \wh{F}_p$ is
injective. In Example \ref{bF} we showed that $b_1^{(2)}(F_r)=(r-1)>0$.
Thus we are in the situation of Theorem \ref{t:general} and the proof
is complete.  \qed
   
\section{Applications} 

We close with some further consequences of the arguments in the
preceding section, and a discussion of related matters. 

\subsection{Parafree groups are $C^*$-simple}

For ease of exposition, it will be convenient to
introduce the following definition.
A group $\G$ is said to be {\em parafree of pf-rank $r$} if 
$\G$ is residually nilpotent and is in the same nilpotent genus as a 
free group of rank $r$.  As  
special cases of Proposition
\ref{boundL_2} and Corollary \ref{c:sameL2} we have:

\begin{corollary}
\label{L2parafree}
If $\G$ is a finitely generated parafree group of  pf-rank $r$, then
$$b_1^{(2)}(\G) \ge r-1=b_1(\G)-1,$$
with equality if $\G$ is finitely presented. 
\end{corollary}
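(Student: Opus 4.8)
The plan is to identify the hypotheses of Proposition \ref{boundL_2} and Corollary \ref{c:sameL2} in the present setting and then apply them directly; indeed the corollary is essentially a repackaging of those two results for the special input where $F$ is a free group. First I would record that if $\G$ is parafree of rank $r$, then by definition it has the same nilpotent genus as the free group $F$ of rank $r$, so by Remark 2.1(9) we have $\wh{\G}_p \cong \wh{F}_p$ for every prime $p$. As in the proof of Theorem \ref{maininfindex}, $\G$ is residually-$p$ for every prime $p$ (a non-trivial element survives in some quotient $\G/\G_c \cong F/F_c$, and free nilpotent groups are residually-$p$), so the natural map $\G \to \wh{\G}_p \cong \wh{F}_p$ is injective with dense image.

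For the inequality, I would apply Proposition \ref{boundL_2} to this dense injection $\G \hookrightarrow \wh{F}_p$. Since the free group $F$ is finitely presented and residually-$p$, the proposition yields $b_1^{(2)}(\G) \ge b_1^{(2)}(F)$, and Example \ref{bF} computes $b_1^{(2)}(F) = r-1$. This gives the bound $b_1^{(2)}(\G) \ge r-1$.

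The identity $r-1 = b_1(\G) - 1$ amounts to showing $b_1(\G)=r$, and since $\wh{\G}_p \cong \wh{F}_p$ this follows at once from Corollary \ref{sameb1}, which gives $b_1(\G) = b_1(F) = r$. Finally, when $\G$ is additionally assumed to be finitely presented, both $\G$ and $F$ are finitely presented residually-$p$ groups with isomorphic pro-$p$ completions, so Corollary \ref{c:sameL2} applies and upgrades the inequality to the equality $b_1^{(2)}(\G) = b_1^{(2)}(F) = r-1$.

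There is no serious obstacle here: the substantive work is already contained in Proposition \ref{boundL_2} (which supplies the lower bound for an arbitrary finitely generated dense subgroup) and in L\"uck's Approximation Theorem underlying Corollary \ref{c:sameL2}. The only points requiring care are verifying that $\G$ genuinely embeds as a \emph{dense} subgroup of $\wh{F}_p$ — which reduces to the residually-$p$ property of parafree groups established for Theorem \ref{maininfindex} — and invoking the correct completion (pro-$p$, rather than merely pro-nilpotent) so that Proposition \ref{boundL_2} is applicable.
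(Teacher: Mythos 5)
Your proposal is correct and follows essentially the same route as the paper, which presents this corollary precisely as a special case of Proposition \ref{boundL_2} and Corollary \ref{c:sameL2}, using the dense injection $\G\hookrightarrow\wh{F}_p$ established via the residually-$p$ property of parafree groups in the proof of Theorem \ref{maininfindex}. All the ingredients you cite (Example \ref{bF}, Corollary \ref{sameb1} for $b_1(\G)=r$, and Corollary \ref{c:sameL2} for the equality in the finitely presented case) match the paper's intended argument.
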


A group
$\Gamma$ is {\em $C^*$-simple} if its
{\it reduced $C^*$-algebra} $C_\lambda^*(\Gamma)$
 is simple as a complex algebra (i.e.
has no proper two-sided ideals).  
By
definition, the {\em reduced $C^*$-algebra} $C^*_\lambda(\G)$ is the norm closure of the image
of the complex group algebra $\Bbb C [\G]$ under the left-regular
representation $\lambda_\G : \Bbb C [\G]\to \mathcal L (\ell^2(\Gamma))$
defined for $\gamma\in\G$
by $\left(\lambda_{\Gamma}(\gamma)\xi\right)(x) =
\xi(\gamma^{-1}x)$
for all $x \in \Gamma$ and $\xi \in \ell^2(\Gamma)$. A group
$\Gamma$ is $C^*$-simple if and only if 
any unitary representation of $\Gamma$ which is weakly contained in
$\lambda_{\Gamma}$ is weakly equivalent to $\lambda_{\Gamma}$. We refer the reader to \cite{dlH} for a thorough
account of the groups that were known to be {\em $C^*$-simple} by 2006. The subsequent work of 
Peterson and Thom \cite{PeT} augments this knowledge.

An important early result in the field is the proof by Powers \cite{powers} that 
non-abelian free-groups are $C^*$-simple. 
 In contexts where one is able to adapt the Powers argument,
one also expects the canonical trace to be the only normalized trace on $C^*_\lambda(\G)$ (cf. Appendix to \cite{BdlH}).
By definition,
a linear form $\tau$ on $C^*_\lambda(\Gamma)$ is a {\it normalised trace}
if $\tau(1)=1$ and $\tau(U^*U)\ge 0,\ \tau(UV)=\tau(VU)$ 
for all $U,V \in C^*_\lambda(\Gamma)$.
The {\it canonical trace} is uniquely defined by
$$
\tau_{\text{\rm{can}}}\left(\sum_{f\in F}z_f\lambda_\Gamma(f)\right) 
\, = \, z_e
$$
for every {\it finite} sum $\sum_{f\in F}z_f\lambda_\Gamma(f) $ where
$z_f\in\Bbb C$ and $F \subset \Gamma$ contains $1$.

\begin{corollary}
\label{C*simple}
Let $\G$ be a finitely generated group. If $\G$
is  parafree of pf-rank $r\geq 2$, then the reduced group $C^*$-algebra $C^*_\lambda(\G)$
is simple and carries a unique normalised trace.\end{corollary}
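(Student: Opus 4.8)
The plan is to deduce $C^*$-simplicity and uniqueness of the trace from the strict positivity of the first $L^2$-Betti number established in Corollary \ref{L2parafree}, together with a known general theorem that converts such positivity into these operator-algebraic conclusions. The decisive input is the work of Peterson and Thom \cite{PeT}, which the excerpt has already flagged: if a countable group $\G$ is torsion-free (more precisely, has no non-trivial finite normal subgroups) and satisfies $b_1^{(2)}(\G)>0$, then $\G$ is $C^*$-simple and $C^*_\lambda(\G)$ carries a unique normalised trace. Thus my strategy reduces to verifying that a finitely generated parafree group $\G$ of rank $r\ge 2$ meets the hypotheses of that theorem and has positive first $L^2$-Betti number.

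First I would record that $b_1^{(2)}(\G)\ge r-1>0$ for $r\ge 2$; this is immediate from Corollary \ref{L2parafree}, which in turn rests on Proposition \ref{boundL_2} applied to the density of $\G$ in $\wh{F}_p$. Second, I would check the requisite absence of finite normal subgroups. A parafree group is by definition residually nilpotent, hence residually finite and in particular torsion-free: any non-trivial torsion element would survive in some nilpotent quotient $\G/\G_c\cong F/F_c$, but the free nilpotent groups $F/F_c$ are torsion-free (see \cite{Gru}), a contradiction. Being torsion-free, $\G$ certainly has no non-trivial finite normal subgroup, so the hypotheses of the Peterson--Thom criterion are satisfied.

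With these two points in hand, the conclusion follows directly: $C^*_\lambda(\G)$ is simple, and the canonical trace $\tau_{\text{can}}$ is the only normalised trace on it. The main obstacle is not any delicate estimate of my own but rather citing the correct external theorem in the form needed; the arithmetic of $L^2$-Betti numbers has already been done in the previous section, so the remaining work is the verification that parafree groups are torsion-free and the invocation of \cite{PeT}. In particular, I would be careful to confirm that the torsion-free hypothesis (rather than a weaker ICC or non-amenability hypothesis) is what the Peterson--Thom theorem requires, since that is exactly the condition supplied by residual nilpotence here.
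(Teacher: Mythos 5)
Your overall strategy is the right one and matches the paper's: positivity of $b_1^{(2)}(\G)$ from Corollary \ref{L2parafree} plus the Peterson--Thom machinery. But there is a genuine gap in the way you invoke \cite{PeT}. Their Corollary 4.6 does not take ``torsion-free'' (or ``no non-trivial finite normal subgroup'') as its hypothesis; it requires their condition $(\star)$ of Section 4, namely that every non-trivial element of $\Z[\G]$ acts without kernel on $\ell^2(\G)$. This is an Atiyah/zero-divisor--type condition, and it is \emph{not} known to follow from torsion-freeness alone --- if it did, one would have resolved the zero-divisor problem for all torsion-free groups. So your second verification step establishes the wrong property: showing that $\G$ is torsion-free (which you do correctly, via the torsion-free nilpotent quotients $\G/\G_c\cong F/F_c$, though note in passing that your intermediate phrase ``residually finite and in particular torsion-free'' is a non sequitur) does not put you in a position to apply the theorem. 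Your own closing caveat, that you would need to confirm which hypothesis Peterson--Thom actually require, is precisely where the argument breaks.

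The repair is short and is exactly what the paper does: a parafree group is residually torsion-free nilpotent, and Peterson and Thom observe in Section 4 of \cite{PeT} that residually torsion-free nilpotent groups satisfy $(\star)$ (ultimately because such groups embed in contexts where the relevant Atiyah-type statement is known, e.g.\ via bi-orderability and the Magnus embedding). With $(\star)$ verified this way and $b_1^{(2)}(\G)\ge r-1>0$ from Corollary \ref{L2parafree}, Corollary 4.6 of \cite{PeT} yields both simplicity of $C^*_\lambda(\G)$ and uniqueness of the normalised trace. In short: keep your first step, but replace the torsion-freeness check by the (stronger, and differently sourced) verification of condition $(\star)$.
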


\begin{proof} Corollary \ref{L2parafree} tells us that $b_1^{(2)}(\G)\neq 0$. By definition,
$\G$ is residually torsion-free nilpotent, and therefore it satisfies condition $(\star)$
of \cite{PeT}, Section 4, i.e.~every non-trivial element of $\Z[\G]$ acts without kernel on $\ell^2(\G)$.
With these facts in hand, the present result follows immediately from
Corollary 4.6 of \cite{PeT} (which in turn relies on \cite{BCH}).\end{proof}

\subsection{A Freiheitssatz for parafree groups}
 
Section 4 of Peterson and Thom \cite{PeT} contains a number of other results concerning the structure
of finitely presented groups that satisfy their condition $(\star)$ and have non-zero  $b_1^{(2)}$.
In the context of parafree groups, the following consequence of
our results and  \cite{PeT} has a particular appeal as it 
it gives a new proof of a result of Baumslag \cite{Baum69} (Theorem 4.1). 

\begin{corollary}
\label{freiheitssatz}
Let $\Gamma$ be a finitely generated group that is parafree
of pf-rank $r\geq 2$. Then every 
generating set $S\subset \G$ has an $r$-element subset $T\subset S$ such that
the subgroup of $G$ generated by
$T$ is free  of rank $r$. 
\end{corollary}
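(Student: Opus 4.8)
The plan is to deduce this Freiheitssatz from the Peterson--Thom machinery, exactly as the proof of Corollary \ref{C*simple} deduced $C^*$-simplicity. The two hypotheses needed to invoke their structural results are already in hand for a finitely generated parafree group $\G$ of rank $r\ge 2$: first, $b_1^{(2)}(\G)\ge r-1>0$ by Corollary \ref{L2parafree}; second, since $\G$ is residually torsion-free-nilpotent it satisfies condition $(\star)$ of \cite{PeT}, namely every non-trivial element of $\Z[\G]$ acts without kernel on $\ell^2(\G)$. So the entire content reduces to citing the appropriate result from Section 4 of \cite{PeT} that produces free subgroups from a generating set under these hypotheses.

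First I would locate the precise statement in \cite{PeT} that, for a group satisfying $(\star)$ with $b_1^{(2)}(\G)\ge k$, guarantees that any generating set contains a $k$-element subset generating a free group of rank $k$; this is the ``freeness'' counterpart to their simplicity theorem and is the natural companion to Corollary 4.6. I expect such a statement gives a free subgroup of rank equal to (a lower bound for) $b_1^{(2)}(\G)$. Since Corollary \ref{L2parafree} gives $b_1^{(2)}(\G)\ge r-1$, this would naively only furnish a free subgroup of rank $r-1$, so the crux of the argument is to recover the full rank $r$.

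The hard part, therefore, will be closing the gap between $r-1$ and $r$. The natural route is to exploit the ordinary first Betti number rather than the $L^2$ one: a generating set $S$ for $\G$ must in particular map onto a generating set of the abelianisation $\G^{\ab}\otimes\Q\cong\Q^r$ (since $b_1(\G)=r$ by Corollary \ref{L2parafree}), so some $r$-element subset $T\subset S$ already has $\Q$-linearly independent image in $H_1(\G,\Q)$. I would then argue that the subgroup $\<T\>$, being generated by $r$ elements whose images are independent in homology, surjects a free abelian group of rank $r$ and hence requires at least $r$ generators; combined with the free-subgroup output of the Peterson--Thom theorem applied to $\<T\>$ or to $S$, one forces $\<T\>$ to be free of rank exactly $r$. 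Making this passage rigorous -- reconciling the homological lower bound $r$ with the $L^2$ lower bound $r-1$ and the Peterson--Thom freeness conclusion so that they combine to pin down a free subgroup of full rank $r$ on a subset of the \emph{given} generating set -- is where the real work lies, and it is the step I would scrutinise most carefully before declaring the corollary proved.
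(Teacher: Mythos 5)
Your overall strategy is the right one and is in fact the paper's: verify condition $(\star)$ of \cite{PeT} (which holds because $\G$ is residually torsion-free-nilpotent), invoke Corollary \ref{L2parafree} for the $L^2$-Betti number bound, and then cite the relevant structural result from Section 4 of \cite{PeT}. But the ``gap between $r-1$ and $r$'' that you identify as the crux is an artifact of your guess at the form of the Peterson--Thom statement, and your proposed method for closing it does not work. The result actually used is Corollary 4.7 of \cite{PeT}, whose hypothesis is the strict inequality $b_1^{(2)}(\G) > r-2$ (not $b_1^{(2)}(\G)\ge r$): under $(\star)$ and $b_1^{(2)}(\G)>r-2$, every generating set already contains an $r$-element subset generating a free group of rank $r$. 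Since Corollary \ref{L2parafree} gives $b_1^{(2)}(\G)\ge r-1 > r-2$, the corollary follows with no further argument.

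The workaround you sketch --- choose $T\subset S$ with $\Q$-linearly independent image in $H_1(\G,\Q)\cong\Q^r$ and then argue that $\langle T\rangle$ must be free of rank $r$ --- is essentially the Stallings route mentioned in the remark immediately following the corollary in the paper: homological independence of a subset $Y\subset G$ implies that $Y$ freely generates a free group \emph{provided} $H_2(G,\Z)=0$. For parafree groups that vanishing is precisely the (open) parafree conjecture, so this step cannot be completed as stated. Nor does combining a rank-$(r-1)$ free subgroup from \cite{PeT} with the homological count force $\langle T\rangle$ to be free of rank $r$: a subgroup generated by $r$ elements that are independent in $H_1(\G,\Q)$ need not be free, and nothing in your sketch excludes relations among the elements of $T$. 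You were right to flag this as the step to scrutinise; as written it is a genuine gap, and the fix is simply to quote the Peterson--Thom corollary with its correct threshold.
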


\begin{proof} With Corollary
\ref{L2parafree} in hand, we can apply  Corollary 4.7 of \cite{PeT} where the conclusion of the corollary 
is shown to hold
for any finitely generated group $\G$ with $b_1^{(2)}(\G) > r-2$ that satisfies $(\star)$.
\end{proof}

This result strengthens greatly Magnus's observation that an $r$-generator
group of parafree rank $r$ must be free of rank $r$.
Also, the proof shows that if $\G$ is an $(r+1)$-generator group of 
parafree rank $r$ (not necessarily finitely presented)
then $b_1^{(2)}(\G)=r-1$ (cf.~Corollary \ref{L2parafree}). 

\begin{remark} The {\em{parafree conjecture}} posits that $H_2(G,\Z)=0$ for every finitely generated parafree group $G$. 
If this were proved, then  Corollary \ref{freiheitssatz} would be a consequence of a theorem of Stallings \cite{St} which
states that if $G$ is a finitely generated group with $H_2(G,\Z)=0$ and $H_1(G,\Z)=\Z^r$ then any $Y\subset G$ that is
independent in $H_1(G,\Q)$ freely generates a free subgroup.
\end{remark}

\subsection{Parafree groups and lattices} 

\begin{corollary}
\label{notlattice}
Let $\G$ be a group of parafree rank $r\geq 2$ that
is not free. Then
$\G$ is not isomorphic to a lattice in a connected Lie group.\end{corollary}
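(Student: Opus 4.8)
The plan is to show that a parafree group $\G$ of rank $r\ge 2$ that is not free cannot be a lattice in a connected Lie group, by playing the $L^2$-Betti number information from Corollary \ref{L2parafree} against what is known about lattices. Corollary \ref{L2parafree} gives $b_1^{(2)}(\G)\ge r-1>0$, so $\G$ has a non-vanishing first $L^2$-Betti number. The strategy is to argue that any lattice in a connected Lie group which has $b_1^{(2)}\neq 0$ must in fact be free (or virtually free), and then to upgrade ``virtually free'' to ``free'' using the parafree hypothesis, thereby contradicting the assumption that $\G$ is not free.

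First I would reduce to the case of a lattice in a semisimple Lie group. A connected Lie group $L$ has a maximal connected solvable normal subgroup (the radical $R$), with $L/R$ semisimple; by the structure theory of lattices in connected Lie groups one can analyse $\G$ through its intersection with $R$ and its image in $L/R$. The key leverage is Theorem \ref{gab}: if $\G$ fits into a short exact sequence $1\to N\to\G\to\Lambda\to 1$ with $N$ and $\Lambda$ both infinite and $b_1^{(2)}(N)<\infty$, then $b_1^{(2)}(\G)=0$. Since we know $b_1^{(2)}(\G)>0$, no such splitting can occur; in particular $\G$ cannot contain an infinite normal subgroup $N$ with $b_1^{(2)}(N)<\infty$ and infinite quotient. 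This is exactly the sort of obstruction that a nontrivial radical or a higher-rank factor would force: a lattice in a higher-rank semisimple group, or one with infinite solvable radical contributing an infinite normal subgroup, would yield a sequence contradicting $b_1^{(2)}(\G)>0$. So $\G$ must be (up to finite index) a lattice in a rank-one group, i.e.\ essentially the fundamental group of a finite-volume locally symmetric space of rank one, or a cocompact/non-cocompact lattice therein.

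Next I would pin down which rank-one lattices can be parafree. Rank-one lattices with $b_1^{(2)}\neq 0$ are severely constrained; for most rank-one symmetric spaces the relevant $L^2$-Betti number vanishes except in low dimensions, and the surviving possibilities are lattices in $\mathrm{PSL}(2,\R)$ (surface groups and free groups) and certain discrete groups commensurable with free groups. A surface group $\Sigma_g$ has $b_1^{(2)}=2g-2$ but is not residually nilpotent in a way compatible with being parafree — more to the point, a closed surface group has $H_2(\Sigma_g,\Z)=\Z\neq 0$, whereas a parafree group shares the lower central series quotients of a free group, forcing its abelianisation to be $\Z^r$ and obstructing the surface relation. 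Thus among lattices the only ones that remain are virtually free groups. Now I invoke the parafree hypothesis decisively: a parafree group is residually (torsion-free)-nilpotent, hence torsion-free, and a torsion-free virtually free group is free (by the Stallings–Swan theory of groups of cohomological dimension one, since $\G$ has a free subgroup of finite index and no torsion). This contradicts the assumption that $\G$ is not free, completing the proof.

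The hard part will be the reduction to rank-one lattices and the careful bookkeeping of the radical and semisimple parts: one must ensure that the intersections of $\G$ with the radical $R$ and the image in $L/R$ genuinely produce an exact sequence with both kernel and quotient infinite (so that Theorem \ref{gab} applies and forces $b_1^{(2)}(\G)=0$), while handling the degenerate cases where one of these pieces is finite. In those degenerate cases $\G$ is virtually a lattice in a single simple factor, and one falls back on the classification of which simple Lie groups admit lattices with non-zero first $L^2$-Betti number — this is where the input from the theory of $L^2$-invariants of locally symmetric spaces (as in L\"uck's treatise \cite{Lu2}) does the real work. Once the geometry confines us to virtually free lattices, the passage to ``free'' via torsion-freeness and cohomological dimension one is routine, and the parafree structure (in particular $H_1(\G,\Z)\cong\Z^r$) rules out the surface-group alternative.
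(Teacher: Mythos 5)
Your overall strategy coincides with the paper's: both start from Corollary \ref{L2parafree}, which gives $b_1^{(2)}(\G)\ge r-1>0$, and both then confine $\G$ to the lattices of $\PSL(2,\R)$ and eliminate the surface-group and free alternatives. The paper, however, achieves the confinement in one step by citing Lott \cite{Lo}: a lattice in a connected Lie group with non-vanishing first $L^2$-Betti number is commensurable with a lattice in $\PSL(2,\R)$. Your plan to re-derive this from Theorem \ref{gab} together with the radical/semisimple decomposition is where you yourself locate ``the hard part,'' and rightly so: even after the bookkeeping with the radical, the degenerate cases still require the vanishing of $b_1^{(2)}$ for lattices in every simple group not locally isomorphic to $\PSL(2,\R)$, which is precisely the content of the theorem you are trying to avoid quoting. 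That portion of your argument is a sketch of a known result rather than a proof; you should simply cite it.

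The genuine gap is in your exclusion of the surface-group case. You argue that $H_2(\Sigma_g,\Z)=\Z\neq 0$ ``obstructs'' $\Sigma_g$ being parafree, but the assertion that a parafree group must have vanishing (or even non-infinite-cyclic) second homology is the parafree conjecture, which is open (see the remark following Corollary \ref{freiheitssatz}); and your remark about the abelianisation being $\Z^r$ has no force, since $H_1(\Sigma_g,\Z)=\Z^{2g}$ is already free abelian. The paper closes this case with the quantitative form of Corollary \ref{L2parafree}: a parafree group of rank $r$ satisfies $b_1^{(2)}(\G)\ge b_1(\G)-1$, whereas $b_1^{(2)}(\Sigma_g)=2g-2=b_1(\Sigma_g)-2$ by Example \ref{bF}, so $\Sigma_g$ is not parafree. (Alternatively, one can exhibit a finite nilpotent quotient of the free group of rank $2g$ that is not a quotient of $\Sigma_g$.) Your treatment of the virtually free case --- torsion-freeness from residual torsion-free nilpotence, then Stallings--Swan to promote virtually free to free --- is correct and matches the paper.
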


\begin{proof} If $\G$ were a lattice then it
would be finitely generated, so $b_1^{(2)}(\G)\neq 0$ by Corollary \ref{L2parafree}.
It is shown in \cite{Lo} that if $\G$ is a lattice in a connected
Lie group and  $b_1^{(2)}(\G)\neq 0$, then $\G$ is commensurable with
a lattice in $\PSL(2,\R)$; i.e. the group is virtually free or
virtually the fundamental group of a closed orientable surface
of genus at least $2$.  Now $\G$ is torsion-free, with torsion-free
abelianization, so in fact, in this
case, $\G$ is free or the fundamental group $\Sigma_g$ of a closed orientable surface
of genus $g\ge 2$. The former is ruled out by assumption, and the latter
is ruled out by the observation that $b_1^{(2)}(\Sigma_g) = 2g-2 =b_1(\Sigma_g)-2$;
see Example \ref{bF}.
(Alternatively, it is straightforward to construct a finite nilpotent 
quotient of a free group of rank $2g$ that cannot be a quotient
of  the genus $g$ surface group.)\end{proof} 
\subsection{Homology boundary links.}
 In contrast to Corollary \ref{notlattice}, we give an example of a lattice in
$\PSL(2,\mathbb C)$ that {\em does} have the same lower central series as a 
free group.  However, the lattice is not residually nilpotent.

\begin{example} Consider  the link $L$ 
shown below. The complement of this link is hyperbolic,
$S^3\smallsetminus L \cong {\mathbb H}^3/\Gamma$ where $\Gamma<\PSL(2,\mathbb C)$
is the torsion-free non-uniform lattice denoted  A2 in \cite{BFLW}. 

\begin{center}
\vbox{\epsfysize=1.5truein\epsfbox{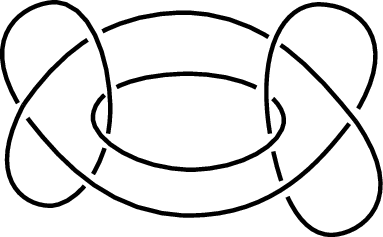}
\\
}
\end{center}
\bigskip

From \cite{BFLW}, we have the presentation
$$\G=\<u,v,z,l\mid [u,l]=1,uzu^{-1}=v^{-1}zvz, l=v^{-1}uzu^{-1}vz\>,$$
where $u$ is a meridian for the unknotted component and $l$
is a longitude for $u$.  The other peripheral subgroup (i.e. the one 
corresponding to the square knot component) is 
$\<uzu^{-1},uv^2uv^{-1}\>$, with meridian $uv^2uv^{-1}$ and longitude $uzu^{-1}$.
The two meridians generate $H_1(\G,\Z)=\Z^2$.

By setting $z=1$ we obtain a homomorphism from $\G$ onto $\<u,v,l\mid l=1 \>$; i.e. the free group of rank $2$.
This homomorphism  $\G\rightarrow F_2$ induces an isomorphism
$H_1G\to H_1F_2$ and a surjection on higher homology groups
(since $H_k(F_2,\Z)=0$ for all $k\geq 2$). An application
of Stallings' Theorem \cite{St} now establishes that $\G$ has the same lower central series quotients as $F_2$.

Note that the homomorphism $\G\rightarrow F_2$ can be realised topologically by
performing $0$-surgery on both the unknotted component (forcing $l=1$) and  the knotted
component (forcing $uzu^{-1}=1$, hence
$z=1$).  The closed manifold resulting from these surgeries is a connected sum of two copies of $\mathbb{S}^2\times\mathbb{S}^1$.

Corollary \ref{notlattice} (alternatively, Remark \ref{r:ref1}) tells us that $\Gamma$ is not residually nilpotent. In fact, it is not difficult to see
that the longitudes described
above lie in the intersection of the terms in the lower central series of $\G$.\qed
\end{example}

\begin{remark}\label{r:ref1} An anonymous referee pointed out to us that the lattice described above exemplifies the following
more general phenomenon. Let $F$ be a free group, let $G$ be a finitely generated group that is not free, and suppose that
there is an epimorphism  $\pi:G\to F$ that induces an isomorphism $\overline\pi:H_1(G,\Z)\to H_1(F,\Z)$. Then $G$ is not residually nilpotent.
To see that this is the case, note that since $F$ is free there is a homomorphism $\sigma:F\to G$ such $\pi\circ\sigma ={\rm{id}}_F$. 
The map that $\sigma$ induces on abelianisations is the inverse of $\overline\pi$; in particular $\sigma(F)$ generates $G/[G,G]$.
But if a set generates a nilpotent group modulo the commutator subgroup, then it generates the nilpotent group itself.
It follows that $\sigma\circ\pi$ induces an isomorphism $F/F_c\to G/G_c$ for all $c>0$. Therefore,
$\ker\, \pi$ (which is assumed to be non-trivial) is contained in the intersection of the terms of the lower central series of $G$.  
\end{remark}

The link depicted above is an example of a homology boundary link.
Recall that a link $L\subset S^3$ of $m\geq 2$
components is called a {\em homology boundary link} if there exists an
epimorphism $h:\pi_1(S^3\setminus L)\rightarrow F$ where $F$ is a free
group of rank $m$.  By Alexander duality, $H_1(\pi_1(S^3\smallsetminus L),\Z)\cong\Z^m$.
Thus $h$ induces an isomorphism
$H_1(\pi_1(S^3\smallsetminus L),\Z)\cong H_1(F_m,\Z)$, and
arguing as above we conclude that (with the exception of trivial links) 
the fundamental groups of homology boundary links are never residually nilpotent. 
On the other hand, standard results guarantee that all lattices in $\PSL(2,\mathbb C)$ 
are {\em virtually} residually $p$ for all but a finite number of primes $p$
(and hence virtually residually nilpotent). Recent work of Agol \cite{agol}
implies that these lattices are also virtually residually torsion-free-nilpotent.

\subsection{Pro-$p$ goodness}
 Homology boundary links also provide interesting
examples in the following related context.  

One says that a group $\G$ is {\em pro-p good} 
if for each $n\geq 0$, the homomorphism of cohomology groups
$$H^n(\widehat{\G}_p;{\F}_p)\rightarrow H^n(\G;{\F}_p)$$
induced by the natural map $\G\rightarrow \widehat{\G}_p$ is an isomorphism,
where the group on the left is in the continuous cohomology of 
$\widehat{\G}_p$.
One says that the group $\G$ is  {\em cohomologically complete}
if $\G$ is pro-$p$ good for all primes $p$.  

It is shown in \cite{BLS}
that many link groups are cohomologically complete, and indeed
it was claimed by Hillman, Matei and Morishita \cite{HMM}
that all link groups are cohomologically complete. 
Counterexamples to the method of \cite{HMM} were given in \cite{Blo}, 
but \cite{Blo} left open the possibility that link groups might nevertheless be cohomologically complete.  Here we note that
homology boundary links provide counterexamples. In particular there are hyperbolic links that provide counterexamples.

\begin{proposition}
\label{nocohomcom}
Let $L$ be a homology boundary link that is not the trivial link of $m$ components.
Then $\pi_1(S^3\smallsetminus L)$ is not pro-$p$ good for any 
prime $p$. In particular, $\pi_1(S^3\smallsetminus L)$ is not cohomologically
complete.\end{proposition}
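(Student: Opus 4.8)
The plan is to find a single cohomological degree in which the comparison map fails to be an isomorphism; degree $2$ will suffice. Throughout write $\G=\pi_1(S^3\ssm L)$. First I would pin down the pro-$p$ completion of $\G$. Since $L$ is a homology boundary link there is an epimorphism $h:\G\epi F$ onto a free group $F$ of rank $m$, and by Alexander duality $H_1(\G,\Z)\cong\Z^m$. Hence $H_1(h)$ is a surjective endomorphism of $\Z^m$ and therefore an isomorphism, while $H_2(F,\Z)=0$ makes $H_2(h)$ trivially surjective. Stallings' theorem \cite{St} then gives that $h$ induces isomorphisms $\G/\G_c\to F/F_c$ for every $c\ge 1$, so $\G$ and $F$ have the same nilpotent genus; consequently (cf.~Remark 2.1(9)) they have the same pro-$p$ completion, which is the free pro-$p$ group of rank $m$.

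Free pro-$p$ groups have cohomological dimension one, so the continuous cohomology $H^n(\wh{\G}_p;\F_p)$ vanishes for all $n\ge 2$; in particular $H^2(\wh{\G}_p;\F_p)=0$. Against this I would compute $H^2(\G;\F_p)$ and show it is nonzero. Assuming $L$ is non-split, its complement $M=S^3\ssm L$ is irreducible with infinite fundamental group and is therefore aspherical, so $M$ is a $K(\G,1)$ and $H^*(\G;\F_p)\cong H^*(M;\F_p)$. Alexander duality gives $H_1(M,\Z)\cong\Z^m$ (free) and $H_2(M,\Z)\cong\Z^{m-1}$, so the universal coefficient theorem yields $H^2(\G;\F_p)\cong\F_p^{\,m-1}$, which is nonzero precisely because $m\ge 2$.

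Combining the two computations, the natural map $H^2(\wh{\G}_p;\F_p)\to H^2(\G;\F_p)$ is a homomorphism from $0$ to a nonzero $\F_p$-vector space and cannot be an isomorphism. Hence $\G$ is not pro-$p$ good. As the prime $p$ was arbitrary and the computation of $H^2(\G;\F_p)$ holds for every $p$, the group $\G$ fails to be pro-$p$ good for all $p$, and in particular it is not cohomologically complete.

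The step I expect to be the main obstacle is the nonvanishing $H^2(\G;\F_p)\ne 0$, equivalently $H_2(\G,\Z)\otimes\F_p\ne 0$, since this is exactly where the topology of the complement enters through asphericity. Asphericity requires $M$ to be irreducible, i.e.\ $L$ to be non-split. For split links the argument genuinely degenerates: for the split union of two trefoils, for example, $\G$ is a free product $T\ast T$ with $H_n(\G;\F_p)=0$ for $n\ge2$, so $H^2(\G;\F_p)=0$ and the degree-two obstruction disappears. Thus the clean argument applies to non-split links (in particular to the hyperbolic examples of interest, such as the lattice $\mathrm{A2}$ above), and for split links one would either restrict the hypotheses to the non-split case or seek the failure of goodness in another degree or with nontrivial coefficients.
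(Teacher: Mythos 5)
Your proof is, in substance, the paper's proof: the authors likewise use Alexander duality plus Stallings' theorem to conclude that $\wh{\G}_p$ is a free pro-$p$ group (so $H^2(\wh{\G}_p;\F_p)=0$), and then assert that $H^2(\G;\F_p)$ has dimension $m-1>0$, so the comparison map fails in degree $2$. The one point where you go beyond the paper is in justifying that second computation, and the caveat you attach to it is substantive rather than pedantic. The paper states the dimension count $\dim_{\F_p}H^2(\G;\F_p)=m-1$ without proof; as you observe, the identification $H^2(\G;\F_p)\cong H^2(S^3\ssm L;\F_p)$ requires the complement to be aspherical, i.e.\ $L$ to be non-split. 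With the paper's definition of a homology boundary link (merely an epimorphism $\pi_1(S^3\ssm L)\epi F_m$), your split union of two trefoils does qualify: its group $T\ast T$ surjects onto $\Z\ast\Z=F_2$, yet $H^2(T\ast T;\F_p)=H^2(T;\F_p)\oplus H^2(T;\F_p)=0$ by Mayer--Vietoris and the fact that a knot exterior has the integral homology of a circle. For that link every comparison map $H^n(\wh{\G}_p;\F_p)\to H^n(\G;\F_p)$ is an isomorphism (degree $0$ trivially, degree $1$ always, and both sides vanish for $n\ge 2$), so the group is actually pro-$p$ good and the proposition as literally stated fails. Thus a non-splitness hypothesis (or an equivalent reformulation) is genuinely needed; your proof is complete in that case, and you have correctly located a gap that the paper's own one-line argument passes over.
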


\begin{proof} Let $\G=\pi_1(S^3\smallsetminus L)$. Since $\G$ has the same nilpotent quotients as the free group of rank $m$, it follows 
 that $\wh{\G}_p$ is isomorphic
to a free pro-$p$ group. Hence $H^2(\wh{\G}_p;{\F}_p)=0$.
On the other hand, since $L$ is a non-trivial link with $m\geq 2$
components,  $H^2(\G;{\F}_p)$ has dimension $m-1>0$ as
an ${\F}_p$-vector space. \end{proof}

By way of contrast, we also note that the lattice $\G$ described in \S 8.2 is
{\em good} in the sense of Serre; i.e.
for each $n\geq 0$ and for every finite $\G$-module $M$, the homomorphism of cohomology groups
$$H^n(\widehat{\G};M)\rightarrow H^n(\G;M)$$
induced by the natural map $\G\rightarrow \widehat{\G}$ is an isomorphism
between the cohomology of $\G$ and the continuous 
cohomology of $\widehat{\G}$. (Note that goodness deals with the profinite completion and not the pro-$p$ completions.)
The goodness of $\G$ follows from
\cite{GJZ} since $\Gamma$ is a subgroup of finite index in a Bianchi group.

\subsection{} Theorem \ref{t:general} 
can be usefully applied to cases where $F$ is not free; for example, 
 $F$ might be a non-abelian surface group or, more
generally, a non-abelian limit group. That these satisfy the condition on
the first $L^2$-Betti number can be seen in Example 7.2 
for surface groups and \cite{BK} for limit groups. This leads to an
analogue of Theorem 7.1 for {\em paralimit} groups.

Note also that, for both surface groups and limit groups, condition $(\star)$ of
\cite{PeT} holds (since these groups are residually torsion-free
nilpotent).
Hence, groups that are paralimit or parasurface groups, in the above sense, also
satisfy a version of Freiheitssatz as in Corollary \ref{freiheitssatz}.

The notion of a ``parasurface group'' was considered in \cite{BR}.

\end{document}